\def\l@subsection{\@tocline{2}{0pt}{2.5pc}{2.5pc}{}}
\def\chapter{\clearpage\thispagestyle{plain}\global\@topnum\z@ 
\@afterindenttrue \secdef\@chapter\@schapter}
\newtheorem{thmgl} {Theorem}    
\newtheorem{propgl}{Proposition}
\newtheorem{lemgl} {Lemma}
\newtheorem{corgl} {Corollary}
\newtheorem{lemnn}{Lemma}
\theoremstyle{definition}
\newtheorem{remgl} {Remark}
\newtheorem{remsgl} [remgl]{Remarks}
\newcommand{\mf}{\mathfrak}
\newcommand{\mc}{\mathcal}
\newcommand{\mb}{\mathbb}
\newcommand{\nts}{\negthinspace}     
\newcommand{\Nts}{\nts\nts}
\newcommand{\ov}{\overline}
\newcommand{\un}{\underline}
\newcommand{\sm}{\setminus}         
\newcommand{\ot}{\otimes}           
\newcommand{\la}{\langle}
\newcommand{\ra}{\rangle}
\newcommand{\Hom}{{\rm Hom}}        
\newcommand{\Mor}{{\rm Mor}}
\newcommand{\End}{{\rm End}}
\newcommand{\Mat}{{\rm Mat}}
\newcommand{\Ext}{{\rm Ext}}
\newcommand{\Sym}{{\rm Sym}} 
\renewcommand{\Im}{{\rm Im}}
\newcommand{\rk}{{\rm rk}}
\newcommand{\tr}{{\rm tr}}
\newcommand{\id}{{\rm id}}
\newcommand{\g}{\mf{g}}
\let\ttie\t
\newcommand{\tie}[1]{{\let\t\ttie \ttie#1}}
\renewcommand{\t}{\mf{t}}  
\newcommand{\gl}{\mf{gl}}
\newcommand{\Dist}{{\rm Dist}}
\newcommand{\GL}{{\rm GL}}
\newcommand{\SL}{{\rm SL}}
\newcommand*\bmat[1]{\begin{bsmallmatrix}#1\end{bsmallmatrix}}
\def\vcdots{\vbox{\baselineskip4\p@ \lineskiplimit\z@
\kern3\p@\hbox{.}\hbox{.}\hbox{.}\Nts\nts\kern3\p@}}
\begin{document}

\title{Highest weight vectors 
and transmutation}

\begin{abstract}
Let $G=\GL_n$ be the general linear group over an algebraically closed field $k$, let $\g=\gl_n$ be its Lie algebra and
let $U$ be the subgroup of $G$ which consists of the upper uni-triangular matrices. Let $k[\g]$ be the algebra of polynomial
functions on $\g$ and let $k[\g]^G$ be the algebra of invariants under the conjugation action of $G$.
We consider the problem of giving finite homogeneous spanning sets for the $k[\g]^G$-modules of highest weight vectors for
the conjugation action on $k[\g]$. We prove a general result in arbitrary characteristic which reduces the problem to
giving spanning sets for the vector spaces of highest weight vectors for the action of
$\GL_r\times\GL_s$ on tuples of $r\times s$ matrices. This requires the technique called ``transmutation" by R.~Brylinsky which
is based on an instance of Howe duality.
In characteristic zero,
we give for all dominant weights $\chi\in\mathbb Z^n$ finite homogeneous spanning sets for the $k[\g]^G$-modules $k[\g]_\chi^U$
of highest weight vectors. This result was already stated by J.~F.~Donin, but he only gave proofs
for his related results on skew representations for the symmetric group.
We do the same for tuples of $n\times n$-matrices under the diagonal conjugation action.
\end{abstract}

\author[R.\ H.\ Tange]{Rudolf Tange}

\keywords{}
\thanks{2010 {\it Mathematics Subject Classification}. 13A50, 16W22, 20G05.}

\maketitle

\section*{Introduction}\label{s.intro}
Let $k$ be  an algebraically closed field and let $\GL_n$ be the group of invertible $n\times n$ matrices with entries in $k$ and let $T_n$ and $U_n$ be the subgroups of diagonal matrices and of upper uni-triangular matrices. The group $\GL_n$ acts on the $k$-vector space $\Mat_n$ of $n\times n$ matrices with entries in $k$ via $S\cdot A=SAS^{-1}$ and therefore on its coordinate ring $k[\Mat_n]$ via $(S\cdot f)(A)=f(S^{-1}AS)$. We identify the character group of $T_n$ with $\mb Z^n$: if $\chi\in\mb Z^n$, then $D\mapsto \prod_{i=1}^nD_{ii}^{\chi_i}$ is the corresponding character of $T_n$. We will call the characters of $T_n$ {\it weights} of $T_n$ or $\GL_n$ and the weights $\chi$ of $T_n$ for which the corresponding weight space $M_\chi$ of a given $T_n$ module $M$ is nonzero will be called {\it weights} of $M$. We say that $\chi\in\mb Z^n$ is {\it dominant} if it is weakly decreasing.

The study of the polynomial ring $k[\g]$ as a $G$-module for a reductive group $G$ with Lie algebra $\g$ under the adjoint action was initiated in Kostant's landmark paper \cite{Kos}.
We will be interested in finding finite homogeneous spanning sets for the $k[\Mat_n]^{\GL_n}$-modules $k[\Mat_n]_\chi^{U_n}$ of highest weight vectors. As is well-known, such a module is nonzero if and only if $\chi$ is dominant and has coordinate sum zero. A weight $\chi\in\mb Z^n$ with this property can uniquely be written as $\chi=[\lambda,\mu]=[\lambda,\mu]_n:=(\lambda_1,\lambda_2,\ldots,0,\ldots,0,\ldots,-\mu_2,-\mu_1)$ were $\lambda$ and $\mu$ are partitions with $|\lambda|=|\mu|$ and $l(\lambda)+l(\mu)\le n$. Here $l(\lambda)$ denotes the length of a partition $\lambda$ and $|\lambda|$ denotes its coordinate sum. As usual partitions are extended with zeros if necessary.

The nilpotent cone $\mc N_n=\{A\in\Mat_n\,|\,A^n=0\}$ is a $\GL_n$-stable closed subvariety of $\Mat_n$. Using the graded Nakayama Lemma it is easy to see that it suffices to find finite homogeneous spanning sets for the vector spaces of highest weight vectors $k[\mc N_n]_\chi^{U_n}$ in the coordinate ring of $\mc N_n$. For background on the conjugation action of $\GL_n$ on $k[\Mat_n]$ and $k[\mc N_n]$, e.g. graded character formulas, we refer to the introduction of \cite{T2} and the references in there.

In \cite{Br} a process called {\it transmutation} is applied to understand the conjugation action of $\GL_n$ on the nilpotent cone. We briefly explain the idea and for simplicity we assume that $k$ has characteristic $0$. Let $G,H$ be reductive groups and let $Y$ be an affine $G\times H$-variety such that $k[Y]=\bigoplus_{i\in I}L_i^*\ot M_i$ where the $L_i$
are mutually nonisomorphic $G$-modules and the $M_i$ are mutually nonisomorphic $H$-modules.
Then $Y$ can be used as a ``catalyst" for transmutation as follows. If $V$ is an affine $G$-variety, then $W=Y\times^GV:=(Y\times V)//G$ is an affine $H$-variety, the $H$-irreducibles that show up in $k[W]$ are the $M_i$, and the multiplicity of $M_i$ in $k[W]$ is the same as that of $L_i$ in $k[V]$. The goal is to find for a given $V$ a suitable $H$ and $Y$ for which the resulting $W$ is much simpler than $V$, but still contains enough interesting information coming from $V$. In \cite{Br} R.~Brylinsky applied this technique to the closed $\GL_n$-stable subvariety $V=\mc N_{n,m}=\{A\in\mc N_n\,|\,A^{m+1}=0\}$ of $\Mat_n$ and $G=\GL_n$. She showed that in this case for $H=\GL_r\times\GL_s$ and a suitable catalyst $Y$ the transmuted variety $W$ is a certain closed subvariety of $\Mat_{rs}^m$ which is all of $\Mat_{rs}^m$ if $n$ is sufficiently big relative to $m,r$ and $s$. Here $\GL_r\times\GL_s$ acts on $\Mat_{rs}^m$ via $((R,S)\cdot\un A)_i=RA_iS^{-1}$, $\un A=(A_1,\ldots,A_m)\in\Mat_{rs}^m$, and on the coordinate ring $k[\Mat_{rs}^m]$ via $((R,S)\cdot f)(\un A)=f((R^{-1},S^{-1})\cdot\un A)$.
The correspondence between the irreducibles for the two groups is in terms of the labels given by $\chi=[\lambda,\mu]\leftrightarrow(-\mu^{\rm rev},\lambda)$, where $\mu^{\rm rev}$ is the reversed $r$-tuple of $\mu$.

In this paper we give finite homogeneous spanning sets for the vector spaces $k[\mc N_n]_\chi^{U_n}$ in characteristic $0$ (Corollary~2 to Theorem~\ref{thm.highest_weight_vecs}) using ``transmutation" (Theorem~\ref{thm.surjective_pullback}) and J.~Donin's results on skew representations for the symmetric group, see Section~\ref{ss.Sym}. For this it is necessary that we make Brylinsky's work explicit in terms of highest weight vectors. It turns out that the method of ``transmutation" works in our case in any characteristic and for certain special weights we can give bases for the highest weight vectors in the coordinate ring of the transmuted variety which then give spanning sets for the highest weight vectors in the coordinate ring of $\mc N_n$.

The paper is organised as follows. In Section~\ref{s.prelim} we introduce some notation, e.g. for diagrams and tableaux, and we state some well-known results from the literature on the invariant algebra $k[\Mat_n]^{\GL_n}$, reduction to the nilpotent cone and good filtrations that we will need.
In Section~\ref{s.charp} we show in Theorem~\ref{thm.surjective_pullback} that the technique of transmutation works in our case in any characteristic. Our main tool here is Donkin's results on good pairs of varieties \cite{Don1}. We can apply Theorem~\ref{thm.surjective_pullback} in arbitrary characteristic for weights $\chi$ with $\chi_n\ge-1$ or $\chi_1\le 1$. For the corresponding $\GL_r\times\GL_s$-weights we give in Theorem~\ref{thm.basis_special_weights} bases for the spaces of highest weight vectors in the coordinate ring of the ``transmuted space" $\Mat_{rs}^m$.

In Section~\ref{s.char0} we always assume that our field $k$ has characteristic $0$. In Section~\ref{ss.Sym} we first develop the necessary results on skew representations of the symmetric group. What we need is explicit polytabloid bases for the ``coinvariants" for a Young subgroup in a tensor product of Specht modules, see Proposition~\ref{prop.coinvariants}. In Section~\ref{ss.highest_weight_vecs} we give in Theorem~\ref{thm.highest_weight_vecs} bases for the spaces of highest weight vectors in the coordinate ring of the ``transmuted space" $\Mat_{rs}^m$. Combined with Theorem~\ref{thm.surjective_pullback} this gives finite homogeneous spanning sets for the vector spaces $k[\mc N_n]_\chi^{U_n}$ in characteristic $0$, see Corollary~2. This can then further be combined with Lemma~\ref{lem.reduction_to_nilpotent_cone} to obtain finite homogeneous spanning sets for the $k[\Mat_n]^{\GL_n}$-modules $k[\Mat_n]_\chi^{U_n}$, see Corollary~3. In Section~\ref{ss.several_matrices} we briefly describe a generalisation to several matrices and how to obtain spanning sets for the $k[\Mat_n^l]^{\GL_n}$-modules $k[\Mat_n^l]_\chi^{U_n}$.

I now explain the relation of Section~\ref{ss.Sym} and Corollary~3 to Theorem~\ref{thm.highest_weight_vecs} with Donin's work \cite{Donin1, Donin2}. 
Donin gave proofs in \cite{Donin1} for his results on skew representations for the symmetric group, but these proofs are often incomplete and \cite{Donin1} was never published.
The paper \cite{Donin2} contains no proofs. Therefore I have given an account with complete proofs in Section~\ref{ss.Sym}.
Especially in the proof of Theorem~\ref{thm.homspace_basis} I follow Donin's approach closely. In all cases a reference to the corresponding result from Donin is given if there is one.
Furthermore, some inaccuracies have been corrected, see e.g. Remark~\ref{rem.coinvariants}.
Corollary~3 to Theorem~\ref{thm.highest_weight_vecs} which describes spanning sets for the $k[\Mat_n]^{\GL_n}$-modules $k[\Mat_n]_\chi^{U_n}$ is also stated by Donin in \cite{Donin1, Donin2}\footnote{Actually Donin claimed that they are bases, but this is incorrect, see Remark~\ref{rems.highest_weight_vecs}.2.},
but the proof sketch given in \cite[p31,32]{Donin1} is unconvincing and no logical link is made with his results on the symmetric group.
In our approach we derive this result using transmutation (Theorem~\ref{thm.surjective_pullback}) from a result (Theorem~\ref{thm.highest_weight_vecs}) on the highest weight vectors in the coordinate ring of a completely different variety with group action: $\Mat_{rs}^m$ under the action of $\GL_r\times\GL_s$.
The latter result is then proved using Donin's results on skew representations for the symmetric group.

\section{Preliminaries}\label{s.prelim}
Throughout this paper $k$ is an algebraically closed field. All our varieties are affine. The groups $\GL_n,T_n,U_n$ and the actions of $\GL_n$ on $\Mat_n$ and $\mc N_{n,m}$ and of $\GL_r\times\GL_s$ on $\Mat_{rs}^m$ are as in the introduction.

For $G$ a reductive group and $\chi$ a dominant weight relative to a Borel subgroup $B=TU$ we denote the standard or Weyl module corresponding to $\chi$ by $\Delta_G(\chi)$ and the costandard or induced module corresponding to $\chi$ by $\nabla_G(\chi)$. We have $\Delta_G(\chi)\cong\nabla_G(-w_0(\chi))^*$, where $w_0$ is the longest element in the Weyl  group. The module $\nabla_G(\chi)$ has simple socle and the module $\Delta_G(\chi)$ has simple top, both isomorphic to the irreducible $L_G(\chi)$ of highest weight $\chi$. In characteristic $0$ we have $\Delta_G(\chi)\cong\nabla_G(\chi)\cong L_G(\chi)$. The main property of these modules that we will use is that for all dominant $\chi_1$ and $\chi_2$, $\Ext_G^1(\Delta_G(\chi_1),\nabla_G(\chi_2))=0$ and $\Hom_G(\Delta_G(\chi_1),\nabla_G(\chi_2))=k$ if $\chi_1=\chi_2$ and $\{0\}$ otherwise. See \cite[II.4.13]{Jan}.
\subsection{The graded Nakayama Lemma}
As is well-known the algebra $k[\Mat_n]^{\GL_n}$ is generated by the algebraically independent functions $s_1,\ldots,s_n$ given by $s_i(A)=\tr(\wedge^iA)$, where $\wedge^iA$ denotes the $i$-th exterior power of $A$. Furthermore, the $s_i$ generate the vanishing ideal of $\mc N_n$. If $m$ is the dimension of the zero weight space of $\nabla_{\GL_n}(\chi)$, then $k[\mc N_n]^{U_n}_\chi$ has dimension $m$ and $k[\Mat_n]^{U_n}_\chi$ is a free $k[\Mat_n]^{\GL_n}$-module of rank $m$. The following lemma is an application of the graded Nakayama Lemma.
\begin{lemgl}\label{lem.reduction_to_nilpotent_cone}
Let $f_1,\ldots,f_l\in k[\Mat_n]^{U_n}_\chi$ be homogeneous. If the restrictions $f_1|_{\mc N_n},\ldots,f_l|_{\mc N_n}$ span $k[\mc N_n]^{U_n}_\chi$, then $f_1,\ldots,f_l$ span $k[\Mat_n]^{U_n}_\chi$ as a $k[\Mat_n]^{\GL_n}$-module.
The same holds with ``span" replaced by ``form a basis of".
\end{lemgl}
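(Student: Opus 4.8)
The plan is to apply the graded Nakayama Lemma to the graded module $k[\Mat_n]^{U_n}_\chi$ over the graded ring $R=k[\Mat_n]^{\GL_n}$, whose irrelevant ideal is $R_+=(s_1,\dots,s_n)$. First I would record the structural facts quoted just above the statement: $R$ is a polynomial ring in the algebraically independent generators $s_1,\dots,s_n$, these generate the vanishing ideal $I(\mc N_n)$ of the nilpotent cone, and $M:=k[\Mat_n]^{U_n}_\chi$ is a free $R$-module of rank $m=\dim\nabla_{\GL_n}(\chi)_0$. Since $U_n$ is unipotent, taking $U_n$-invariants is exact on the short exact sequence $0\to I(\mc N_n)\to k[\Mat_n]\to k[\mc N_n]\to 0$ of $\GL_n$-modules, and $I(\mc N_n)^{U_n}_\chi = (R_+\cdot k[\Mat_n])^{U_n}_\chi \supseteq R_+\cdot M$; conversely $R_+\subseteq I(\mc N_n)$ gives $R_+\cdot M\subseteq I(\mc N_n)^{U_n}$. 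In fact I expect equality $I(\mc N_n)^{U_n}_\chi = R_+\cdot M$: an element of $I(\mc N_n)^{U_n}$ is a sum $\sum_i s_i g_i$ with $g_i\in k[\Mat_n]$, and averaging (projecting onto the $U_n$-fixed part, which is possible since $U_n$ is unipotent, or more simply using that the $s_i$ are $\GL_n$-invariant so the $U_n$-isotypic projection commutes with multiplication by $s_i$) lets us take $g_i\in k[\Mat_n]^{U_n}$. Hence the key identity
\[
M / R_+ M \;\cong\; \bigl(k[\Mat_n]/I(\mc N_n)\bigr)^{U_n}_\chi \;=\; k[\mc N_n]^{U_n}_\chi .
\]

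Next, the spanning statement. Suppose $f_1,\dots,f_l\in M$ are homogeneous and their restrictions span $k[\mc N_n]^{U_n}_\chi = M/R_+M$. Then the images of the $f_i$ generate $M/R_+M$, so by the graded Nakayama Lemma (applicable since $M$ is a finitely generated graded $R$-module, being free of finite rank, and $R$ is non-negatively graded with $R_0=k$) the $f_i$ generate $M$ as an $R$-module; that is precisely the assertion that they span $k[\Mat_n]^{U_n}_\chi$ over $k[\Mat_n]^{\GL_n}$.

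For the basis statement, I would use that $M$ is free of rank $m$, so $M/R_+M$ has $k$-dimension $m$; this already forces $l\ge m$ for a spanning set, and $l=m$ for a basis of $k[\mc N_n]^{U_n}_\chi$. If the restrictions of $f_1,\dots,f_m$ form a basis of $M/R_+M$, then by the graded Nakayama Lemma they form a minimal generating set of $M$, and a minimal generating set of a free module of rank $m$ with exactly $m$ elements is a basis — concretely, the surjection $R^m\to M$ sending the standard basis to the $f_i$ is a surjection of graded free modules of the same finite rank, hence an isomorphism (its kernel, being a graded submodule of $R^m$ which is zero after $\otimes_R k$, is zero by Nakayama again, or by comparing Hilbert series).

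The main obstacle is the identification $I(\mc N_n)^{U_n}_\chi = R_+\cdot M$, i.e.\ that multiplication by the invariants $s_i$ is compatible with taking $U_n$-invariants in the required way; once that is in hand, everything else is a formal application of the graded Nakayama Lemma together with the freeness of $M$. I do not expect any characteristic restrictions here, since the only place where the ground field enters is through exactness of $(-)^{U_n}$ on $\GL_n$-modules, which holds because $U_n$ is unipotent (equivalently, $k[\Mat_n]$ has a good filtration and $H^1$ vanishing applies), and through the structural facts about $R$ and $M$ quoted above, which are stated in arbitrary characteristic.
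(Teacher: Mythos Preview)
The paper does not give a self-contained proof; it simply refers to \cite[Lem.~2, Prop.~1]{T1}. Your approach via the graded Nakayama Lemma is exactly the intended one, and the overall architecture of your argument is correct.

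There is, however, a genuine gap in your justification of the key identity $I(\mc N_n)^{U_n}_\chi = R_+\cdot M$. You propose to ``average'' or ``project onto the $U_n$-fixed part'', asserting this is possible because $U_n$ is unipotent. That is backwards: unipotent groups have \emph{no} Reynolds operator. What unipotence buys you is exactness of the fixed-point functor (vanishing of $H^1(U_n,-)$ on rational modules), not an equivariant splitting. So neither of the two justifications in your parenthetical works as written, and you cannot simply replace the $g_i$ by $U_n$-invariant ones in this way.

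The clean fix is a dimension count, and you already have every ingredient for it scattered through your proposal. Since $M$ is free of rank $m$ over $R$, one has $\dim_k M/R_+M = m$. Exactness of $(-)^{U_n}$ applied to $0\to I(\mc N_n)\to k[\Mat_n]\to k[\mc N_n]\to 0$ shows that restriction $M\to k[\mc N_n]^{U_n}_\chi$ is surjective; its kernel visibly contains $R_+M$, so the map factors through $M/R_+M$. Both source and target now have $k$-dimension $m$, so this surjection is an isomorphism and the kernel is exactly $R_+M$. With that in hand, the rest of your argument (Nakayama for the spanning statement, and the freeness/Hilbert-series comparison for the basis statement) goes through unchanged.
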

\noindent We refer to \cite[Lem.~2, Prop.~1]{T1} for references and explanation.
\subsection{Good filtrations}
A $G$-module $M$ is said to have a {\it good filtration} if it has a (possibly finite) $G$-module filtration $0=M_0\subseteq M_1\subseteq M_2\subseteq\cdots$, $\bigcup_{i\ge0}M_i=M$, such that each quotient $M_i/M_{i-1}$ is isomorphic to some induced module $\nabla_G(\chi)$. If $M$ has a good filtration, the number of quotients isomorphic to $\nabla_G(\chi)$ is independent of the good filtration and equals $\dim M^U_\chi$. If $k$ has characteristic $0$, then every $G$-module has a good filtration. For more details we refer to \cite[II.4.16,17]{Jan}. For example, a direct summand of a module with a good filtration has a good filtration.
\subsection{Graded characters}
If $M=\bigoplus_{i\ge 0} M_i$ is a graded vector space with $\dim M_i<\infty$ for all $i$, then the {\it graded dimension} of $M$ is the formal power series $\sum_{i\ge 0}\dim M_iz^i$. Here one can use for $z$ any other grading variable. Similarly, if $G$ is a general linear group, $M=\bigoplus_{i\ge 0} M_i$ a graded $G$-module with a good filtration, and $\nabla_G(\chi)$ has finite good filtration multiplicity in $M$, then the {\it graded good filtration multiplicity} of $\nabla_G(\chi)$ in $M$ is the formal power series $\sum_{i\ge 0} (M_i:\nabla_G(\chi))z^i$, where $(M_i:\nabla_G(\chi))$ is the good filtration multiplicity of $\nabla_G(\chi)$ in $M_i$. Note that by the above the graded good filtration multiplicity of $\nabla_G(\chi)$ in $M$ is the graded dimension of $M^U_\chi$. We say that one graded dimension or multiplicity is $\le$ another if this is true coefficient-wise.
\subsection{Good pairs}
Recall from \cite{Don1} that an affine variety $V$ on which a reductive group $G$ acts is called {\it good} if $k[V]$ has a good filtration.
Furthermore, if $A$ is a closed $G$-stable subvariety of $V$, then $(V,A)$ is called a {\it good pair of $G$-varieties} if the
vanishing ideal of $A$ in $k[V]$ has a good filtration. In this case $A$ is itself a good $G$-variety.
If $(V,A)$ is a good pair of $G$-varieties, then the restriction map $k[V]^U_\chi\to k[A]^U_\chi$ is surjective by \cite[II.4.13]{Jan}.
\subsection{Skew Young diagrams and tableaux}
For $\lambda$ a partition of $n$ we denote the nilpotent orbit which consists of the matrices whose Jordan normal form has block sizes $\lambda_1,\cdots,\lambda_{l(\lambda)}$, by $\mc O_\lambda$. For $\lambda,\mu$ partitions of $n$, we say that $\lambda\ge\mu$ if $\sum_{j=1}^i\lambda_j\ge\sum_{j=1}^i\mu_j$ for $i=1,\ldots,n-1$. This order is called the {\it dominance order}. In \cite[Prop~1.6]{Ger} it was proved that $\ov{\mc O}_\lambda\supseteq\mc O_\mu$ if and only if $\lambda\ge\mu$. 
Here $\ov{\mc O}_\lambda$ denotes the closure of the orbit $\mc O_\lambda$.
Since $\mc N_{n,m-1}$ is the union of the $\mc O_\lambda$ with $\lambda_1\le m$, it follows  easily that $\mc N_{n,m-1}=\ov{\mc O}_{m^qr}$, where $q$ and $r$ are quotient and remainder under division of $n$ by $m$.

We will denote the transpose of a partition $\lambda$ by $\lambda'$ and we will identify each partition $\lambda$ with the corresponding Young diagram $\{(i,j)\,|\,1\le i\le l(\lambda),1\le j\le\lambda_i\}$.
The $(i,j)\in\lambda$ are called the {\it boxes} or {\it cells} of $\lambda$.
More generally, if $\lambda,\mu$ are partitions with $\lambda\supseteq\mu$, then we denote the diagram $\lambda$ with the
boxes of $\mu$ removed by $\lambda/\mu$ and call it the {\it skew Young diagram} associated to the pair $(\lambda,\mu)$.
Of course the skew diagram $\lambda/\mu$ does not determine $\lambda$ and $\mu$. We denote the number of boxes in
a skew diagram $E$ by $|E|$. We define $\Delta_t$ to be the diagram
$$\begin{ytableau}
\none&\none&\ \\
\none&\none[\iddots]&\none[\text {\hspace{3cm} ($t$ boxes)\,.}]\\
\ &\none&\none
\end{ytableau}$$
\smallskip

Let $E$ be a skew diagram with $t$ boxes. A {\it skew tableau} of shape $E$ is a mapping $T:E\to \mb N=\{1,2,\ldots\}$.
A skew tableau of shape $E$ is called {\it row-ordered} if its entries are weakly increasing along rows,
{\it strictly row-ordered} if its entries are strictly increasing along rows,
and it is called {\it ordered} if its entries are weakly increasing along rows and down columns.
The notions column-ordered and strictly column-ordered are defined in a completely analogous way.
A skew tableau of shape $E$ is called {\it semi-standard} if its entries are weakly increasing along
the rows and strictly increasing down the columns, and it is called {\it row semi-standard} if its entries
are strictly increasing along the rows and weakly increasing down the columns.
It is called a {\it $t$-tableau} if its entries are the numbers $1,\ldots,t$ (so the entries
must be distinct) and it is called {\it standard} if it is a $t$-tableau and its entries are (strictly) increasing
along rows and down columns. We will associate to $E$ two special skew tableaux $T_E$ and $S_E$ as follows.
We define $T_E$ by filling in the numbers $1,\ldots,t$ row by row from left to right and top to bottom
and we define $S_E$ by filling the boxes in the $i$-th row with $i$'s. So $T_E$ is standard and $S_E$ is semi-standard.
Two tableaux $S$ and $T$ of shape $E$ are called {\it row equivalent} if, for each $i$, the $i$-th row of $F$ is
a permutation of the $i$-th row of $T$. The notion of column equivalence is defined in a completely analogous way.
Finally, if $m$ is the biggest integer occurring in a tableau $T$, or $0$ if $T$ is empty, then the {\it weight} of $T$
is the $m$-tuple whose $i$-th component is the number of occurrences of $i$ in $T$. Sometimes we will also consider
the weight of $T$ as an $m'$-tuple for some $m'\ge m$ by extending it with zeros.

\section{Transmutation and semi-invariants in arbitrary characteristic}\label{s.charp}
Let $r,s$ be integers $\ge0$ with $r+s\le n$. We denote the variety of pairs $(A,B)\in\Mat_{rn}\times\Mat_{ns}$ with
$AB=0$ by $Y_{r,s,n}$ and for $m$ an integer $\ge 2$ we define the maps $\varphi_{r,s,n,m}$ and $\ov\varphi_{r,s,n,m}$ by
\begin{align*}
\varphi_{r,s,n,m}:(A,B,X)\mapsto&(AB,AXB,\ldots,AX^mB)\\
&:\Mat_{rn}\times\Mat_{ns}\times\Mat_n\to\Mat_{rs}\times\Mat_{rs}^m\\
\ov\varphi_{r,s,n,m}:(A,B,X)\mapsto&(s_1(X),\ldots,s_n(X),\varphi_{r,s,n,m}(A,B,X))\\
&:\Mat_{rn}\times\Mat_{ns}\times\Mat_n\to k^n\times\Mat_{rs}\times\Mat_{rs}^m\,.
\end{align*}
We will denote several of the restrictions of these maps by the same symbol. The group $\GL_{r,s,n}:=\GL_r\times\GL_s\times\GL_n$
acts on $\Mat_{rn}\times\Mat_{ns}$ via $(S,T,U)\cdot(A,B)=(SAU^{-1},UBT^{-1})$ and on $\Mat_{rn}\times\Mat_{ns}\times\Mat_n$ via
$(S,T,U)\cdot(A,B,X)=(SAU^{-1},UBT^{-1},UXU^{-1})$. Note that $Y_{r,s,n}$ is a $\GL_{r,s,n}$-stable closed subvariety of $\Mat_{rn}\times\Mat_{ns}$.
Note also that $\varphi_{r,s,n,m}$ and $\ov\varphi_{r,s,n,m}$ are equivariant for the action of $\GL_{r,s,n}$ if we let $\GL_n$ act trivially on
$ k^n\times\Mat_{rs}\times\Mat_{rs}^m$ and $\GL_r\times\GL_s$ trivially on $k^n$ and via its obvious diagonal action on $\Mat_{rs}\times\Mat_{rs}^m$.

We consider $\Mat_{rs}\times\Mat_{rs}^m$ as a closed subvariety of $k^n\times\Mat_{rs}\times\Mat_{rs}^m$
by taking the first $n$ scalar components zero and we consider $\Mat_{rs}^m$ as a closed subvariety of
$\Mat_{rs}\times\Mat_{rs}^m$ by taking the first matrix component the zero matrix.
So $\varphi_{r,s,n,m}=\ov\varphi_{r,s,n,m}$ on $\Mat_{rn}\times\Mat_{ns}\times\mc N_n$ and $\varphi_{r,s,n,m}(Y_{r,s,n}\times\mc N_n)\subseteq\Mat_{rs}^m$.
If $l\ge m$, then we consider $\Mat_{rs}^m$ as a closed subvariety of $\Mat_{rs}^l$ by extending an
$m$-tuple of $r\times s$ matrices with zero matrices to an $l$-tuple of $r\times s$ matrices.
So $\varphi_{r,s,n,l}=\varphi_{r,s,n,m}$ on $\Mat_{rn}\times\Mat_{ns}\times\mc N_{n,m}$ if $l\ge\min(m,n-1)$.
When $r$ and $s$ are fixed we denote the image $\varphi_{r,s,n,m}(Y_{r,s,n}\times\mc N_{n,m})\subseteq\Mat_{rs}^m$ by $W_{n,m}$.

We will use the embedding of $\Mat_n$ in $Y_{r,s,n}\times\Mat_n$ which is given by
$$X\mapsto(E_r,F_s,X)\,,$$
where $E_r=\begin{bmatrix}0\Nts&I_r\end{bmatrix}\in\Mat_{rn}$, $F_s=\begin{bmatrix}I_s\\0\end{bmatrix}\in\Mat_{ns}$.
Then $\varphi_{r,s,n,m}$ can be restricted to $\Mat_n$ and $\varphi_{r,s,n,m}(X)$ consists of the lower left $r\times s$ corners of the first $m$ powers of $X$.

Any point of $Y_{r,s,n}$ is contained in an irreducible curve which also contains a point $(A,B)\in Y_{r,s,n}$
with $A$ and $B$ of maximal rank $r$ and $s$ (see e.g. \cite[p38]{Br}) and if $(A,B)$ is such a point, then
it is easy to see that $g\cdot(A,B)=(E_r,F_s)$ for some $g\in\GL_n$. It follows that $Y_{r,s,n}$ is irreducible and that
$\varphi_{r,s,n,m}(\mc N_{n,m})$ is dense in $W_{n,m}$.

We will use the $\GL_{r,s,n}$-variety $Y_{r,s,n}$ as the catalyst for the transmutation
from $\GL_n$-varieties to $\GL_r\times\GL_s$-varieties. We will mainly be interested in applying this transmutation to the varieties $\mc N_{n,m}$.
Assertion (ii) of the next proposition, which is an analogue in arbitrary characteristic of \cite[Cor.~4.3]{Br}, says in particular that $W_{n,m}$
is the transmuted variety of $\mc N_{n,m}$. 

\begin{propgl}\label{prop.quotient}\ 
\begin{enumerate}[{\rm (i)}]
\item If $m\ge n-1$, then $\ov\varphi_{r,s,n,m}:\Mat_{rn}\times\Mat_{ns}\times\Mat_n\to k^n\times\Mat_{rs}\times\Mat_{rs}^m$ is a $\GL_n$-quotient
morphism onto its image.
\item If $r+s\le n$ and $\nu$ is a partition of $n$ with $\nu_1\le m+1$, then $Y_{r,s,n}\times\ov{\mc O}_\nu$ is a good $\GL_{r,s,n}$-variety
and $\varphi_{r,s,n,m}:Y_{r,s,n}\times\ov{\mc O}_\nu\to\Mat_{rs}^m$ is a $\GL_n$-quotient morphism onto its image.
\end{enumerate}
\end{propgl}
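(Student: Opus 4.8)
The plan is to obtain (i) from the characteristic-free first fundamental theorem together with Cayley--Hamilton, and then to deduce (ii) by restricting the quotient morphism of (i) along two closed $\GL_n$-stable inclusions, keeping the restriction of invariants surjective by means of the good-pair results of \cite{Don1}. For (i): it suffices to show that $k[\Mat_{rn}\times\Mat_{ns}\times\Mat_n]^{\GL_n}$ is generated as a $k$-algebra by $s_1(X),\dots,s_n(X)$ and the entries of the matrices $AX^iB$ with $0\le i\le m$, i.e.\ by the $\ov\varphi_{r,s,n,m}$-pullbacks of the coordinate functions on $k^n\times\Mat_{rs}\times\Mat_{rs}^m$; granting this, the $\GL_n$-invariant morphism $\ov\varphi_{r,s,n,m}$ factors through the affine quotient $(\Mat_{rn}\times\Mat_{ns}\times\Mat_n)//\GL_n=\Spec k[\Mat_{rn}\times\Mat_{ns}\times\Mat_n]^{\GL_n}$, and the induced morphism to $k^n\times\Mat_{rs}\times\Mat_{rs}^m$ corresponds to a surjection of coordinate rings, hence is a closed immersion, so $\ov\varphi_{r,s,n,m}$ is a $\GL_n$-quotient morphism onto its (closed) image. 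For the generation statement I would invoke the characteristic-free first fundamental theorem for $\GL_n$, specialised to one conjugation matrix $X$ together with the $r$ covectors making up $A$ and the $s$ vectors making up $B$: $k[\Mat_{rn}\times\Mat_{ns}\times\Mat_n]^{\GL_n}$ is generated by $k[\Mat_n]^{\GL_n}=k[s_1,\dots,s_n]$ together with the entries of all matrices $AX^iB$, $i\ge0$. Finally, by Cayley--Hamilton $X^i$ for $i\ge n$ is a $k[s_1,\dots,s_n]$-linear combination of $X^0,\dots,X^{n-1}$, so the entries of $AX^iB$ for $i\ge n$ lie in the subalgebra generated by the $s_j(X)$ and the entries of $AX^0B,\dots,AX^{n-1}B$; since $m\ge n-1$ all of $AX^0B,\dots,AX^{n-1}B$ appear, proving (i).

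\emph{For the ``good variety'' assertion of (ii)}, write $k[Y_{r,s,n}\times\ov{\mc O}_\nu]=k[Y_{r,s,n}]\ot k[\ov{\mc O}_\nu]$ as a $\GL_{r,s,n}$-module, with $\GL_r\times\GL_s$ acting trivially on the second factor. Here $k[\Mat_{rn}\times\Mat_{ns}]$ has a good $\GL_{r,s,n}$-filtration (Cauchy filtrations of $k[\Mat_{rn}]$ and $k[\Mat_{ns}]$, combined via the theorem that a tensor product of modules with good filtrations for a product of general linear groups again has a good filtration), and $(\Mat_{rn}\times\Mat_{ns},Y_{r,s,n})$ is a good pair of $\GL_{r,s,n}$-varieties (standard monomial theory for the variety of complexes; cf.\ \cite{Don1}), which is where $r+s\le n$ enters; hence $k[Y_{r,s,n}]$ has a good $\GL_{r,s,n}$-filtration. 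Likewise $(\Mat_n,\ov{\mc O}_\nu)$ is a good pair of $\GL_n$-varieties (Donkin; cf.\ \cite{Don1}), so $k[\ov{\mc O}_\nu]$ has a good $\GL_n$-filtration and hence a good $\GL_{r,s,n}$-filtration. As $\nabla_{\GL_{r,s,n}}(\chi)$ factors as $\nabla_{\GL_r}(\chi^{(1)})\ot\nabla_{\GL_s}(\chi^{(2)})\ot\nabla_{\GL_n}(\chi^{(3)})$, the tensor-product theorem applies once more and $k[Y_{r,s,n}]\ot k[\ov{\mc O}_\nu]$ has a good $\GL_{r,s,n}$-filtration; i.e.\ $Y_{r,s,n}\times\ov{\mc O}_\nu$ is a good $\GL_{r,s,n}$-variety.

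\emph{For the ``quotient morphism'' assertion of (ii)}, I would use the principle that if $Z'\subseteq Z$ is a closed $\GL_n$-stable subvariety whose vanishing ideal $I$ in $k[Z]$ has a good $\GL_n$-filtration, then $k[Z]^{\GL_n}\to k[Z']^{\GL_n}$ is surjective: apply $(-)^{\GL_n}$ to $0\to I\to k[Z]\to k[Z']\to0$ and note that $\Ext^1_{\GL_n}(k,I)=0$, because $k=\Delta_{\GL_n}(0)$ and $\Ext^1_{\GL_n}(\Delta_{\GL_n}(0),\nabla_{\GL_n}(\psi))=0$ for all dominant $\psi$. Fixing $l\ge\max(m,n-1)$, I apply this twice: with $Z=\Mat_{rn}\times\Mat_{ns}\times\Mat_n$ and $Z'=\Mat_{rn}\times\Mat_{ns}\times\ov{\mc O}_\nu$, whose ideal $k[\Mat_{rn}\times\Mat_{ns}]\ot I(\ov{\mc O}_\nu)$ has a good $\GL_n$-filtration by the previous paragraph; and then with $Z=\Mat_{rn}\times\Mat_{ns}\times\ov{\mc O}_\nu$ and $Z'=Y_{r,s,n}\times\ov{\mc O}_\nu$, whose ideal $I(Y_{r,s,n})\ot k[\ov{\mc O}_\nu]$ also has a good $\GL_n$-filtration. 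Composing, $k[\Mat_{rn}\times\Mat_{ns}\times\Mat_n]^{\GL_n}$ surjects onto $k[Y_{r,s,n}\times\ov{\mc O}_\nu]^{\GL_n}$. By (i) the source is generated by $s_1(X),\dots,s_n(X)$ and the entries of $AX^iB$, $0\le i\le l$; on $Y_{r,s,n}\times\ov{\mc O}_\nu$ the $s_j(X)$ vanish (nilpotency), the entries of $AX^0B=AB$ vanish (definition of $Y_{r,s,n}$), and, as $\nu_1\le m+1$ forces $X^{m+1}=0$ on $\ov{\mc O}_\nu$, the entries of $AX^iB$ vanish for $i>m$ too. Hence $k[Y_{r,s,n}\times\ov{\mc O}_\nu]^{\GL_n}$ is generated by the restrictions of the entries of $AX^iB$ with $1\le i\le m$, i.e.\ by the $\varphi_{r,s,n,m}$-pullbacks of the coordinate functions on $\Mat_{rs}^m$; arguing as in (i), $\varphi_{r,s,n,m}\colon Y_{r,s,n}\times\ov{\mc O}_\nu\to\Mat_{rs}^m$ is a $\GL_n$-quotient morphism onto its image.

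\emph{Main obstacle.} Everything past the two imported inputs is the formal calculus of good filtrations plus Cayley--Hamilton, so the real content is: the characteristic-free first fundamental theorem used in (i) --- which, because only one matrix variable occurs, is close to the classical vector/covector first fundamental theorem together with $k[\Mat_n]^{\GL_n}=k[s_1,\dots,s_n]$ --- and the good-pair statements for the variety of complexes $(\Mat_{rn}\times\Mat_{ns},Y_{r,s,n})$ and for $(\Mat_n,\ov{\mc O}_\nu)$. Two routine points need care: that each of the quotient morphisms above has closed image (it does, the relevant invariant ring being generated by pullbacks of coordinate functions, which realises the affine GIT quotient as a closed subvariety of the target), and that the $\GL_r\times\GL_s$-action suppressed above is harmless, since it only enters through the $\GL_r$- and $\GL_s$-tensor factors of $\nabla_{\GL_{r,s,n}}(\chi)$.
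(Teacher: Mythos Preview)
Your proof is correct and follows essentially the same approach as the paper: for (i) the paper cites Donkin's quiver-invariants result \cite[Prop]{Don2}, which is precisely the characteristic-free first fundamental theorem you invoke, and then uses Cayley--Hamilton; for (ii) the paper establishes the good pair $(\Mat_{rn}\times\Mat_{ns}\times\Mat_n,\,Y_{r,s,n}\times\ov{\mc O}_\nu)$ via \cite[Prop.~1.3b(i), Thm.~2.2a(ii), Prop.~1.3e(i)]{Don1} and then packages your explicit Ext-vanishing-plus-vanishing-of-generators argument into a single citation of \cite[Prop.~1.4a]{Don1}. The only cosmetic difference is that the paper justifies the good pair $(\Mat_{rn}\times\Mat_{ns},Y_{r,s,n})$ via the ``good complete intersection'' criterion (this is where $r+s\le n$ enters) rather than standard monomial theory for the variety of complexes.
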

\begin{proof}
(i).\ If we apply \cite[Prop]{Don2} to the quiver with two nodes $x_1$ and $x_2$ of dimensions $1$ and $n$
with $s$ arrows from $x_1$ to $x_2$, $1$ loop at $x_2$ and $r$ arrows from $x_2$ to $x_1$, then we obtain that
the algebra	of $\GL_n$-invariants of $s$ vectors, $r$ covectors and $1$ matrix is generated by $s_1(X),\ldots,s_n(X)$ and the scalar products
$\la f,X^iv\ra$, where $f$ is one of the covectors, $v$ is one of the vectors, $X$ is the matrix and $i$ is $\ge0$.
Of course we may assume that $i<n$ by the Cayley-Hamilton Theorem. So we obtain the assertion.\\
(ii).\ As is well-known $\Mat_{rn}$ is a good $\GL_r\times\GL_n$-variety and therefore it is also a good $\GL_{r,s,n}$-variety
if we let $\GL_s$ act trivially. Similarly, $\Mat_{ns}$ is also a good $\GL_{r,s,n}$-variety and $\Mat_n$ is a good
$\GL_{r,s,n}$-variety if we let $\GL_r\times\GL_s$ act trivially. So, by the Donkin-Mathieu result on tensor products \cite[Prop.~II.4.21]{Jan}, 
$\Mat_{rn}\times\Mat_{ns}\times\Mat_n$ is a good $\GL_{r,s,n}$-variety.
Since $r+s\le n$, $Y_{r,s,n}$ is a good complete intersection in $\Mat_{rn}\times\Mat_{ns}$ by similar, but easier, arguments to those
in the proof of \cite[Thm.~2.1(c)]{Don1}. So $(\Mat_{rn}\times\Mat_{ns},Y_{r,s,n})$
is a good pair of $\GL_{r,s,n}$-varieties by \cite[Prop.~1.3b(i)]{Don1}. 
Furthermore, $(\Mat_n,\ov{\mc O}_\nu)$ is a good pair of $\GL_n$-varieties by \cite[Thm~2.2a(ii)]{Don1} and therefore also
a good pair of $\GL_{r,s,n}$-varieties if we let $\GL_r\times\GL_s$ act trivially. 
So $(\Mat_{rn}\times\Mat_{ns}\times\Mat_n,Y_{r,s,n}\times\ov{\mc O}_\nu)$ is a good pair of $\GL_{r,s,n}$-varieties by \cite[Prop~1.3e(i)]{Don1}.
This implies the first assertion and if we combine it with (i) and \cite[Prop~1.4a]{Don1} we obtain the second assertion.
\end{proof}

\begin{propgl}\label{prop.good_pair}\ 
Assume $r+s\le n$ and let $\nu$ be a partition of $n$ with $\nu_1\le m+1$.
Then $\big(\Mat_{rs}^m,\varphi_{r,s,n,m}(Y_{r,s,n}\times\ov{\mc O}_\nu)\big)$ is a good pair of $\GL_r\times\GL_s$-varieties.
\end{propgl}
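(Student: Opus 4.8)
The statement has two parts: that $k[\Mat_{rs}^m]$ has a good filtration as a $\GL_r\times\GL_s$-module, and that the vanishing ideal of $W:=\varphi_{r,s,n,m}(Y_{r,s,n}\times\ov{\mc O}_\nu)$ in $k[\Mat_{rs}^m]$ has one. The first part is routine: $\Mat_{rs}$ is a good $\GL_r\times\GL_s$-variety, by the same classical fact used for $\Mat_{rn}$ as a $\GL_r\times\GL_n$-variety in the proof of Proposition~\ref{prop.quotient}, so $\Mat_{rs}^m=\Mat_{rs}\times\cdots\times\Mat_{rs}$ is good by Mathieu's theorem on tensor products \cite[Cor.~4.2.14]{BK}, just as in that proof. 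Everything thus reduces to the ideal of $W$.

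The main tool is a descent principle: if $G$ is reductive, $G\times H$ acts on an affine variety $V'$, $A'\subseteq V'$ is closed and $G\times H$-stable, and $(V',A')$ is a good pair of $G\times H$-varieties, then $(V'//G,A'//G)$ is a good pair of $H$-varieties. Indeed $H^1(G,I_{V'}(A'))=0$ by the $\Ext$-vanishing of \cite[II.4.13]{Jan}, so $0\to I_{V'}(A')^G\to k[V']^G\to k[A']^G\to 0$ is exact and identifies $I_{V'}(A')^G$ with the vanishing ideal of $A'//G$ in $V'//G$; and the same $\Ext$-vanishing shows that $G$-invariants of a $G\times H$-module with a good filtration form an $H$-module with a good filtration, which takes care of $k[V'//G]$, $k[A'//G]$ and $I_{V'}(A')^G$ at once. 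Applying this to the good pair $(Y_{r,s,n}\times\mc N_{n,m},\,Y_{r,s,n}\times\ov{\mc O}_\nu)$ of $\GL_{r,s,n}$-varieties --- good because $Y_{r,s,n}$ is a good $\GL_{r,s,n}$-variety (a complete intersection, as $r+s\le n$) and $(\mc N_{n,m},\ov{\mc O}_\nu)$ is a good pair of $\GL_n$-varieties by Donkin's theorem \cite[Thm~2.2]{Don1} --- and using that by Proposition~\ref{prop.quotient}(ii) the map $\varphi_{r,s,n,m}$ realises $(Y_{r,s,n}\times\mc N_{n,m})//\GL_n=W_{n,m}$ and $(Y_{r,s,n}\times\ov{\mc O}_\nu)//\GL_n=W$ as closed subvarieties of $\Mat_{rs}^m$, we get that $(W_{n,m},W)$ is a good pair of $\GL_r\times\GL_s$-varieties. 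Since the vanishing ideal of $W$ in $k[\Mat_{rs}^m]$ is an extension --- in the category of $\GL_r\times\GL_s$-modules --- of the vanishing ideal of $W$ in $k[W_{n,m}]$ (which has a good filtration, as just shown) by the vanishing ideal of $W_{n,m}$ in $k[\Mat_{rs}^m]$, and extensions of modules with good filtrations again have good filtrations, it suffices to treat the case $\ov{\mc O}_\nu=\mc N_{n,m}$, i.e.\ to show that $(\Mat_{rs}^m,W_{n,m})$ is a good pair.

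For this I would realise $\Mat_{rs}^m$ itself as a $\GL_n$-quotient. Let $\GL_{r,s,n}$ act on $V':=\Mat_{rn}\times\Mat_{ns}^m$ by $(S,T,U)\cdot(A,B_1,\dots,B_m)=(SAU^{-1},UB_1T^{-1},\dots,UB_mT^{-1})$, and let $\psi\colon V'\to\Mat_{rs}^m$ be $(A,B_1,\dots,B_m)\mapsto(AB_1,\dots,AB_m)$, which is $\GL_n$-invariant and $\GL_r\times\GL_s$-equivariant. Reading the rows of $A$ as $r$ covectors and the columns of the $B_i$ as $sm$ vectors in $k^n$, the first fundamental theorem for $\GL_n$ gives that $k[V']^{\GL_n}$ is generated by the entries of $AB_1,\dots,AB_m$, and the second that the relations are spanned by the $(n+1)\times(n+1)$ minors of the $r\times sm$ matrix of these pairings, of which there are none since $r\le n-1$ (cf.\ the use of \cite[Prop]{Don2} in the proof of Proposition~\ref{prop.quotient}(i)). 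Hence $\psi^*$ identifies $k[\Mat_{rs}^m]$ with $k[V']^{\GL_n}$, so $\psi$ is a $\GL_n$-quotient morphism onto $\Mat_{rs}^m$. Let $\theta\colon Y_{r,s,n}\times\mc N_{n,m}\to V'$ be the $\GL_{r,s,n}$-equivariant morphism $(A,B,X)\mapsto(A,XB,X^2B,\dots,X^mB)$ and put $A':=\overline{\theta(Y_{r,s,n}\times\mc N_{n,m})}$; since $AB=0$ on $Y_{r,s,n}$ we have $\psi\circ\theta=\varphi_{r,s,n,m}$ there, so $\psi(A')=W_{n,m}$ (as $W_{n,m}$ is closed). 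Granting that $(V',A')$ is a good pair of $\GL_{r,s,n}$-varieties, the descent principle gives that $(\Mat_{rs}^m,A'//\GL_n)$ is a good pair of $\GL_r\times\GL_s$-varieties with $A'//\GL_n$ a closed subvariety of $\Mat_{rs}^m$; since $A'\to A'//\GL_n$ is surjective and $\psi(A')=W_{n,m}$, this closed subvariety is $W_{n,m}$, so $(\Mat_{rs}^m,W_{n,m})$ is a good pair.

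So the whole argument hinges on one point, which I expect to be the main obstacle: that $(V',A')=\big(\Mat_{rn}\times\Mat_{ns}^m,\ \overline{\{(A,XB,\dots,X^mB):AB=0,\ X\in\mc N_{n,m}\}}\big)$ is a good pair of $\GL_{r,s,n}$-varieties, i.e.\ that the vanishing ideal of $A'$ in $k[\Mat_{rn}\times\Mat_{ns}^m]$ has a good filtration. Here $A'$ is the closure of a variety of ``chains'', a close relative of a variety of complexes, and the plausible routes are either to transport the good filtration on the vanishing ideal of $Y_{r,s,n}\times\mc N_{n,m}$ in $k[\Mat_{rn}\times\Mat_{ns}\times\Mat_n]$ --- available from the ingredients assembled in the proof of Proposition~\ref{prop.quotient}(ii) --- across the morphism $\theta$, or to exhibit a Frobenius splitting of $A'$ compatible with $V'$ via the straightening/bideterminant techniques for varieties of complexes; carrying either of these out is where the real work lies.
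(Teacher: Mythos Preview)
Your reduction to the case $\ov{\mc O}_\nu=\mc N_{n,m}$ is correct, and your descent principle is essentially \cite[Prop.~1.4a]{Don1}. The realisation of $\Mat_{rs}^m$ as $(\Mat_{rn}\times\Mat_{ns}^m)/\!/\GL_n$ via $\psi$ is also correct. But the step you yourself flag --- that $(V',A')$ is a good pair of $\GL_{r,s,n}$-varieties --- is a genuine gap, and neither of your proposed routes closes it. Transporting along $\theta$ does not help: $\theta$ is neither a quotient morphism nor a closed immersion, so there is no mechanism to carry the good filtration on the vanishing ideal of $Y_{r,s,n}\times\mc N_{n,m}$ in $k[\Mat_{rn}\times\Mat_{ns}\times\Mat_n]$ over to the vanishing ideal of $\overline{\theta(\cdot)}$ in $k[V']$. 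Nor is $A'$ a variety of complexes or anything close to one: the condition is that $(B_1,\ldots,B_m)=(XB,\ldots,X^mB)$ for a single nilpotent $X$ with $AB=0$, which is a parametrised ``orbit-closure'' condition rather than a rank or incidence condition of the type handled by straightening laws or the known Frobenius splittings. There is no evident reason why the vanishing ideal of $A'$ should have a good filtration, and I do not see how to prove it.

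The paper sidesteps this entirely by a different device. Instead of manufacturing a new ambient space whose quotient is $\Mat_{rs}^m$, it enlarges $n$ to some $N\ge(m+1)\max(r,s)$, so that $W_{N,m}=\Mat_{rs}^m$ already holds, and then bridges from size $N$ back to size $n$ through a variety whose goodness \emph{is} known: the rank variety $Z_{N,n}=\{(B,X)\in\Mat_{Ns}\times\Mat_N\,:\,\rk(B\,|\,X)\le n\}$. The pair $(\Mat_{Ns}\times\Mat_N,Z_{N,n})$ is good for $\GL_N\times\GL_{s+N}$ by the determinantal result \cite[Prop.~1.4(c)]{Don1}, hence for $\GL_{r,s,N}$ after restricting and adding a trivial $\GL_r$-action, and then $(\Mat_{rN}\times\Mat_{Ns}\times\Mat_N,\Mat_{rN}\times Z_{N,n})$ is a good pair. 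An elementary calculation (conjugate so that $\Im(B)+\Im(X)$ lies in the first $n$ coordinates) shows $\ov\varphi_{r,s,N,N-1}(\Mat_{rN}\times Z_{N,n})=\ov\varphi_{r,s,n,N-1}(\Mat_{rn}\times\Mat_{ns}\times\Mat_n)$. Descent along the quotient morphisms $\ov\varphi$ then yields a chain of good pairs linking $\Mat_{rs}^m$ to $\varphi_{r,s,n,m}(Y_{r,s,n}\times\ov{\mc O}_\nu)$ inside the common ambient image $\ov\varphi_{r,s,N,N-1}(\Mat_{rN}\times\Mat_{Ns}\times\Mat_N)$, and transitivity of good pairs \cite[Lem.~1.3a(ii)]{Don1} finishes the argument. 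The idea you are missing is precisely this use of the rank variety $Z_{N,n}$ as the intermediary; it replaces your unmanageable $A'$ by a determinantal variety for which the required good-pair statements are available off the shelf.
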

\begin{proof}
Choose $N\ge (m+1)\max(r,s)$. By the argument in the proof of \cite[Thm~5.1]{Br} we have $\varphi_{r,s,N,m}(\mc N_{N,m})=\Mat_{rs}^m$
and therefore we certainly have $\varphi_{r,s,N,m}(Y_{r,s,N}\times\mc N_{N,m})=\Mat_{rs}^m$.
In the proof of Proposition~\ref{prop.quotient} we have seen that $(\Mat_{rN}\times\Mat_{Ns}\times\Mat_N,Y_{r,s,N}\times\mc N_{N,m})$
is a good pair of $\GL_{r,s,N}$-varieties. So by Proposition~\ref{prop.quotient}(i) and \cite[Prop.~1.4(a)]{Don1}\medskip

\noindent(a).\ $(\ov\varphi_{r,s,N,N-1}(\Mat_{rN}\times\Mat_{Ns}\times\Mat_N),\Mat_{rs}^m)$ is a good pair of $\GL_r\times\GL_s$-\hbox{\hspace{.7cm}}varieties.\medskip

Put $Z_{N,n}=\{(B,X)\in\Mat_{Ns}\times\Mat_N\,|\,\rk(B|X)\le n\}$. If we identify $\Mat_{Ns}\times\Mat_N$ with $\Mat_{N,s+N}$, then
$(\Mat_{Ns}\times\Mat_N,Z_{N,n})$ is a good pair of $\GL_N\times\GL_{s+N}$-varieties by \cite[Prop.~1.4(c)]{Don1}.
By \cite[Cor.~4.2.15]{BK} it is then a good pair of $\GL_N\times(\GL_s\times\GL_N)$-varieties
and by \cite[Cor.~4.2.14]{BK} it is then also a good pair of $\GL_s\times\GL_N$-varieties if we let $\GL_N$ act diagonally.
It will also be a good pair of $\GL_{r,s,N}$-varieties if we let $\GL_r$ act trivially.
So by \cite[Prop~1.3e(i)]{Don1} $(\Mat_{rN}\times\Mat_{Ns}\times\Mat_N,\Mat_{rN}\times Z_{N,n})$ is a good pair of $\GL_{r,s,N}$-varieties.
It now follows from \cite[Prop.~1.4a]{Don1} that\medskip

\noindent(b).\ $(\ov\varphi_{r,s,N,N-1}(\Mat_{rN}\times\Mat_{Ns}\times\Mat_N),\ov\varphi_{r,s,N,N-1}(\Mat_{rN}\times Z_{N,n}))$
is a good \hbox{\hspace{.7cm}}pair of $\GL_r\times\GL_s$-varieties.\medskip

Let $(e_1,\ldots,e_N)$ be the standard basis of $k^N$ and let $(A,B,X)\in\Mat_{rN}\times Z_{N,n}$. Then $\dim(\Im(B)+\Im(X))\le n$, so
for some $g\in\GL_N$ we have $$\Im(gB)+\Im(gX)=\Im(gB)+\Im(gXg^{-1})\subseteq\{e_1,\ldots,e_n\}.$$
Write $$g\cdot A=\bmat{A_1&&A_2},\ g\cdot X=\bmat{X_1&&X_2\\0&&0},\ g\cdot B=\bmat{B_1\\0}\,,$$
$A_1\in\Mat_{rn}, X_1\in\Mat_n, B_1\in\Mat_{ns}$. Then a simple computation shows that $\ov\varphi_{r,s,N,N-1}(A,B,X)=\ov\varphi_{r,s,n,N-1}(A_1,B_1,X_1)$, so\medskip

\noindent(c).\ $\ov\varphi_{r,s,N,N-1}(\Mat_{rN}\times Z_{N,n})=\ov\varphi_{r,s,n,N-1}(\Mat_{rn}\times\Mat_{ns}\times\Mat_n)$,\medskip

since the inclusion $\supseteq$ is obvious.
In the proof of Proposition~\ref{prop.quotient} we saw that $(\Mat_{rn}\times\Mat_{ns}\times\Mat_n,Y_{r,s,n}\times\ov{\mc O}_\nu)$ is a good pair
of $\GL_{r,s,n}$-varieties. So by \cite[Prop.~1.4a]{Don1} we have\medskip

\noindent(d).\  $(\ov\varphi_{r,s,n,N-1}(\Mat_{rn}\times\Mat_{ns}\times\Mat_n),\varphi_{r,s,n,m}(Y_{r,s,n}\times\ov{\mc O}_\nu))$ is a good pair
of \hbox{\hspace{.7cm}}$\GL_r\times\GL_s$-varieties.\medskip

Combining (a)-(d) and \cite[Lem.~1.3a(ii)]{Don1} we obtain the assertion.
\end{proof}

\begin{remsgl}
1.\ Similar as in the proof of Proposition~\ref{prop.good_pair}, one can show that for $r$ and $s$ arbitrary
$(\Mat_{rs}\times\Mat_{rs}^m,\varphi_{r,s,n,m}(\Mat_{rn}\times\Mat_{ns}\times\ov{\mc O}_\nu))$ is a good pair
of $\GL_r\times\GL_s$-varieties.\\
2.\ The result \cite[Thm~2.2a(ii)]{Don1} can also be deduced from \cite[Thm.~4.3]{MvdK} in combination with \cite[Ex. 4.2.E.2]{BK}.
The point is that the splitting from \cite{MvdK} is easily seen to be $B$-canonical.
\end{remsgl}

By Proposition~\ref{prop.quotient}(ii) we have $W_{n,m}\nts\cong\nts Y_{r,s,n}\times^{\GL_n}\mc N_{n,m}\nts:=\nts(Y_{r,s,n}\times\mc N_{n,m})/\nts/\GL_n$.
It is well-known that the formal character of $k[Y_{r,s,n}]$ is independent of the characteristic
(this can also be deduced from the formula in \cite[Prop1.3b(ii)]{Don1}). So by \cite[Thm~6.3]{KV} and \cite[Thm~9]{H} (see also \cite[Thm~3.3]{Br})
the sections in a good $\GL_{r,s,n}$-filtration of $k[Y_{r,s,n}]$ are precisely the induced $\GL_{r,s,n}$-modules
$\nabla_{\GL_r}(-\mu^{\rm rev})\ot\nabla_{\GL_s}(\lambda)\ot\nabla_{\GL_n}([\mu,\lambda])$, each occurring once,
where $\lambda$ and $\mu$ are partitions with $l(\mu)\le r$ and $l(\lambda)\le s$.

Now if $V$ is a good $\GL_n$-variety, then $Y_{r,s,n}\times^{\GL_n}V$ is a good $\GL_r\times\GL_s$-variety by \cite[Prop~1.2e(iii)]{Don1}
and, by the above and a simple character calculation, the good filtration multiplicity of
$\nabla_{\GL_r}(-\mu^{\rm rev})\ot\nabla_{\GL_s}(\lambda)$ in $k[Y_{r,s,n}\times^{\GL_n}V]$ is equal to that of $\nabla_{\GL_n}([\lambda,\mu])$ in $k[V]$.
Note here that $\nabla_{\GL_n}([\mu,\lambda])^*\cong\Delta_{\GL_n}([\lambda,\mu])$.
Loosely spoken, each copy of $\nabla_{\GL_n}([\lambda,\mu])$ in $k[V]$ is replaced by $\nabla_{\GL_r}(-\mu^{\rm rev})\ot\nabla_{\GL_s}(\lambda)$
if $l(\mu)\le r$ and $l(\lambda)\le s$ and removed otherwise.
We can apply this to $V=\mc N_{n,m}$.

If we give the piece of $k[\Mat_{rs}^m]$ of multidegree $\nu$ total degree $\sum_{i=1}^m\nu_ii$, then the vanishing ideals of the varieties $W_{n,m}$ are graded,
so their coordinate rings will inherit the above total grading. The aforementioned
equalities of good filtration multiplicities for $k[\mc N_{n,m}]$ and $k[W_{n,m}]$ are then in fact equalities of graded good filtration multiplicities.
Furthermore, the graded dimension of $k[\mc N_{n,m}]^{U_n}_{[\lambda,\mu]}$ is increasing in $m$, and by the above it is also increasing in $n$,
since $W_{n,m}\subseteq W_{N,m}$ whenever $N\ge n$. It follows that the graded dimension of $k[\mc N_n]^{U_n}_{[\lambda,\mu]}$ is increasing in $n$.
This was observed by R.~Brylinsky in \cite{Br}.

The theorem below says that to find finite spanning sets for the highest weight vectors in the coordinate ring of the $\GL_n$-variety $\mc N_{n,m}$,
it is enough to do this for the $\GL_r\times\GL_s$-variety $\Mat_{rs}^m$. We note that, since $k[\Mat_{rs}^m]$ has a good filtration
and its formal character is independent of the characteristic, the good filtration multiplicity $\dim k[\Mat_{rs}^m]^{U_r\times U_s}_{(-\mu^{\rm rev},\lambda)}$
of $\nabla_{\GL_r}(-\mu^{\rm rev})\ot\nabla_{\GL_s}(\lambda)$ in $k[\Mat_{rs}^m]$ is independent of the characteristic of $k$.
A simple character calculation combined with \cite[Ex.~I.7.10(b)]{Mac} shows that the multigraded good filtration multiplicity of
$\nabla_{\GL_r}(-\mu^{\rm rev})\ot\nabla_{\GL_s}(\lambda)$ in $k[\Mat_{rs}^m]$ is $s_\lambda\ast s_\mu(z_1,\ldots,z_m)$,
where $s_\lambda$ is the Schur function associated to $\lambda$, $\ast$ denotes the internal product of Schur functions and $z_i$ is
a grading variable for the $i$-th matrix component. So this multiplicity is $0$ if $|\lambda|\ne|\mu|$ or if $s_\lambda\ast s_\mu$
only contains Schur functions associated to partitions of length $>m$.

\begin{thmgl}\label{thm.surjective_pullback}
Let $\chi=[\lambda,\mu]$ be a dominant weight in the root lattice, $l(\mu)\le r$, $l(\lambda)\le s$, $r+s\le n$,
and let $\nu$ be a partition of $n$ with $\nu_1\le m+1$.
Then the pull-back $$k[\Mat_{rs}^m]^{U_r\times U_s}_{(-\mu^{\rm rev},\lambda)}\to k[\ov{\mc O}_\nu]^{U_n}_\chi$$
along $\varphi_{r,s,n,m}:\ov{\mc O}_\nu\to\Mat_{rs}^m$ is surjective, and in case $\ov{\mc O}_\nu=\mc N_{n,m}$ and
$n\ge (m+1)\max(r,s)$ it is an isomorphism.
\end{thmgl}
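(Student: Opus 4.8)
The plan is to factor the pull-back $\varphi^*$ through $k[W]$, where $W=\varphi_{r,s,n,m}(Y_{r,s,n}\times\ov{\mc O}_\nu)$ is the transmuted variety occurring in Proposition~\ref{prop.good_pair}, and to combine surjectivity of the restriction $k[\Mat_{rs}^m]\to k[W]$ (a good‑pair statement) with injectivity of the map $k[W]\to k[\ov{\mc O}_\nu]$ obtained by restricting along the section $X\mapsto(E_r,F_s,X)$ (coming from the quotient description in Proposition~\ref{prop.quotient}(ii)). That these two maps must piece together into an isomorphism on the relevant weight spaces is then forced by the equality of good filtration multiplicities recalled just before the theorem.

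First I would check that $\varphi^*$, pulled back along $X\mapsto\varphi_{r,s,n,m}(E_r,F_s,X)$, indeed lands in $k[\ov{\mc O}_\nu]^{U_n}_\chi$; this is the one genuinely computational point. For $b=tu\in B_n=T_nU_n$ one has $E_rb^{-1}=b^\flat E_r$ and $bF_s=F_sb^\sharp$, where $b^\flat$ is the bottom right $r\times r$ block of $b^{-1}$ and $b^\sharp$ the top left $s\times s$ block of $b$; both are upper triangular, with torus parts $\mathrm{diag}(t_{n-r+1}^{-1},\dots,t_n^{-1})$ and $\mathrm{diag}(t_1,\dots,t_s)$ respectively. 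Hence, for the conjugation action of $b$ on $\ov{\mc O}_\nu$, one gets $b\cdot\varphi^*f=\varphi^*\big(((b^\flat)^{-1},b^\sharp)\cdot f\big)$. If $f$ is a $U_r\times U_s$‑invariant $T_r\times T_s$‑weight vector of weight $(-\mu^{\rm rev},\lambda)$, then absorbing the unipotent parts of $(b^\flat)^{-1}$ and $b^\sharp$ (using $U_r\times U_s$‑invariance) and evaluating the weight on the torus parts yields $b\cdot\varphi^*f=\chi(b)\varphi^*f$; this is exactly where the labelling $[\lambda,\mu]\leftrightarrow(-\mu^{\rm rev},\lambda)$ is pinned down, via $\chi_i=\lambda_i$ for $i\le s$ and $\chi_{n+1-j}=-\mu_j$ for $j\le r$. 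So $\varphi^*f\in k[\ov{\mc O}_\nu]^{U_n}_\chi$.

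Since $\varphi_{r,s,n,m}(E_r,F_s,X)\in W$ for every $X\in\ov{\mc O}_\nu$, the map of the theorem factors as
$$k[\Mat_{rs}^m]^{U_r\times U_s}_{(-\mu^{\rm rev},\lambda)}\xrightarrow{\ \alpha\ }k[W]^{U_r\times U_s}_{(-\mu^{\rm rev},\lambda)}\xrightarrow{\ \beta\ }k[\ov{\mc O}_\nu]^{U_n}_\chi,$$
with $\alpha$ induced by the closed embedding $W\hookrightarrow\Mat_{rs}^m$ and $\beta$ by $X\mapsto(E_r,F_s,X)$. By Proposition~\ref{prop.good_pair}, $(\Mat_{rs}^m,W)$ is a good pair of $\GL_r\times\GL_s$‑varieties, so $\alpha$ is surjective. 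By Proposition~\ref{prop.quotient}(ii), $k[W]=k[Y_{r,s,n}\times\ov{\mc O}_\nu]^{\GL_n}$; a $\GL_n$‑invariant function vanishing on $\{(E_r,F_s)\}\times\ov{\mc O}_\nu$ vanishes on its $\GL_n$‑saturation $(\GL_n\cdot(E_r,F_s))\times\ov{\mc O}_\nu$, which is dense because the locus of maximal‑rank pairs in $Y_{r,s,n}$ is a single $\GL_n$‑orbit and hence open dense in the irreducible variety $Y_{r,s,n}$ (see the discussion before Proposition~\ref{prop.quotient}); therefore $\beta$ is injective on all of $k[W]$. Finally, $k[W]=k[Y_{r,s,n}\times^{\GL_n}\ov{\mc O}_\nu]$ has a good $\GL_r\times\GL_s$‑filtration, and since $\ov{\mc O}_\nu$ is a good $\GL_n$‑variety by \cite[Thm~2.2a(ii)]{Don1}, the multiplicity identity recalled before the theorem gives
$$\dim k[W]^{U_r\times U_s}_{(-\mu^{\rm rev},\lambda)}=\dim k[\ov{\mc O}_\nu]^{U_n}_\chi<\infty.$$
Hence the injection $\beta$ between these two spaces is an isomorphism, and $\varphi^*=\beta\circ\alpha$ is surjective. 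In the case $\ov{\mc O}_\nu=\mc N_{n,m}$ with $n\ge(m+1)\max(r,s)$ one has $W=W_{n,m}=\Mat_{rs}^m$ (by the argument of \cite[Thm~5.1]{Br} used in the proof of Proposition~\ref{prop.good_pair}), so $\alpha$ is the identity and $\varphi^*=\beta$ is an isomorphism.

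The step I expect to be the main obstacle is the equivariance computation in the second paragraph: one must keep track of the fact that $U_n$ is the upper unitriangular subgroup, that the ``corner'' elements $b^\flat,b^\sharp$ are again upper triangular (so they really lie in $B_r,B_s$ and can be killed against the highest weight vector $f$), and that the weights line up under $[\lambda,\mu]\leftrightarrow(-\mu^{\rm rev},\lambda)$ rather than under some variant involving a transpose, a reversal, or a sign error. Everything else is a formal assembly of Propositions~\ref{prop.quotient} and \ref{prop.good_pair} and the multiplicity bookkeeping already in place.
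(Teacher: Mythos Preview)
Your proof is correct and follows essentially the same route as the paper: factor the pull-back through $k[W]$, use Proposition~\ref{prop.good_pair} for surjectivity of the restriction $k[\Mat_{rs}^m]\to k[W]$, use density of $\GL_n\cdot(E_r,F_s)$ in $Y_{r,s,n}$ for injectivity of $k[W]\to k[\ov{\mc O}_\nu]$, and match dimensions via the good-filtration multiplicity identity. The only organisational difference is that the paper first reduces to the case $\ov{\mc O}_\nu=\mc N_{n,m}$ via the good pair $(\mc N_{n,m},\ov{\mc O}_\nu)$ and then works with $W_{n,m}$, whereas you apply Propositions~\ref{prop.quotient}(ii) and~\ref{prop.good_pair} directly to the general $\ov{\mc O}_\nu$; since both propositions are already stated in that generality, your shortcut is harmless and arguably cleaner.
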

\begin{proof}
For a matrix $M$ denote by $M_{r\rfloor,\lfloor s}$ the lower left $r\times s$ corner of $M$ and define $M_{r\rfloor,r\rfloor}$ and $M_{\lfloor s,\lfloor s}$ similarly. Then we have
$$(SXS^{-1})_{r\rfloor,\lfloor s}=S_{r\rfloor,r\rfloor}X_{r\rfloor,\lfloor s}(S_{\lfloor s,\lfloor s})^{-1}$$
and therefore 
$$\varphi_{r,s,n,m}(SXS^{-1})=S_{r\rfloor,r\rfloor}\varphi_{r,s,n,m}(X)(S_{\lfloor s,\lfloor s})^{-1}$$ for any $X\in\Mat_n$ and any upper triangular $S\in\GL_n$.
So indeed the pull-back along $\varphi_{r,s,n,m}$ maps highest weighty vectors to highest weight vectors and it is an easy exercise to see that
the weights correspond as stated in the theorem.

Since $(\mc N_{n,m},\ov{\mc O}_\nu)$ is a good pair of $\GL_n$ varieties by \cite[Thm.~2.1c, Lem.~1.3a(ii)]{Don1} we may assume $\ov{\mc O}_\nu=\mc N_{n,m}$.
By the discussion before the theorem, based on Proposition~\ref{prop.quotient}, we know that the good filtration multiplicity of $\nabla_{\GL_r}(-\mu^{\rm rev})\ot\nabla_{\GL_s}(\lambda)$
in $k[W_{nm}]$ is equal to that of $\nabla_{\GL_n}([\lambda,\mu])$ in $k[\mc N_{n,m}]$.
Put differently, we know that $k[W_{nm}]^{U_r\times U_s}_{(-\mu^{\rm rev},\lambda)}$ and $k[\mc N_{n,m}]^{U_n}_\chi$ have the same dimension.
As we have seen before, $\varphi_{r,s,n,m}(\mc N_{n,m})$ is dense in $W_{nm}$, so the pull-back $k[W_{nm}]\to k[\mc N_{n,m}]$ along $\varphi_{r,s,n,m}$
is injective and induces an isomorphism between $k[W_{nm}]^{U_r\times U_s}_{(-\mu^{\rm rev},\lambda)}$ and $k[\mc N_{n,m}]^{U_n}_\chi$.
By the argument in the proof of \cite[Thm~5.1]{Br} we have $\varphi_{r,s,n,m}(\mc N_{n,m})=\Mat_{rs}^m$ if $n\ge (m+1)\max(r,s)$, which gives us the final assertion.
So it suffices to show that the restriction $k[\Mat_{rs}^m]^{U_r\times U_s}_{(-\mu^{\rm rev},\lambda)}\to k[W_{nm}]^{U_r\times U_s}_{(-\mu^{\rm rev},\lambda)}$
is surjective and this follows from Proposition~\ref{prop.good_pair}.
\end{proof}
\begin{remsgl}\label{rems.surjective_pullback}
1.\ Assume $m=n-1$. If $f\in k[\Mat_{rs}^m]^{U_r\times U_s}_{(-\mu^{\rm rev},\lambda)}$ is homogeneous for the total grading defined above, then the pull-back of $f$ along $\varphi_{r,s,n,m}:\mc N_n\to\Mat_{rs}^m$ has an obvious lift to $k[\Mat_n]^{U_n}_{[\lambda,\mu]}$, namely the pull-back of $f$ along $\varphi_{r,s,n,m}:\mc \Mat_n\to\Mat_{rs}^m$. This follows from the fact that the displayed formulas at the beginning of the proof of Theorem~\ref{thm.surjective_pullback} hold for any $X\in\Mat_n$. So if we have a spanning set of $k[\mc N_n]^{U_n}_{[\lambda,\mu]}$ which is pulled back from $k[\Mat_{rs}^m]^{U_r\times U_s}_{(-\mu^{\rm rev},\lambda)}$ along $\varphi_{r,s,n,m}$, then we are always in the situation to apply Lemma~\ref{lem.reduction_to_nilpotent_cone}.\\
2.\ With the total grading of $k[\Mat_{rs}^m]$ defined above the pull-back along $\varphi_{r,s,n,m}:\mc N_n\to\Mat_{rs}^m$ is a homomorphism of graded vector spaces. By \cite[Thm.~2.14]{BCHLLS} and the independence of the characteristic of the graded formal character of $k[\mc N_n]$, the good filtration multiplicity of $\nabla_{\GL_n}([\lambda,\mu])$ in the degree $d$ piece of $k[\mc N_n]$ is the same for all $n\ge l(\lambda)+l(\mu)+d-t$, where $t=|\lambda|=|\mu|$.
From this, Theorem~\ref{thm.surjective_pullback} and the fact that the graded dimension of $k[\mc N_{n,m}]^{U_n}_{[\lambda,\mu]}$ is increasing in $m$ and $n$ it follows that
the  pull-back $k[\Mat_{rs}^{n-1}]^{U_r\times U_s}_{(-\mu^{\rm rev},\lambda)}\to k[\mc N_n]^{U_n}_{[\lambda,\mu]}$ will be an isomorphism in degree $d$ if $n\ge l(\lambda)+l(\mu)+d-t$.\\
\end{remsgl}

The space $\Mat_{rs}^m=\Mat_{rs}\ot k^m$ has an extra action of the group $\GL_m$ which commutes with the action of $\GL_r\times\GL_s$.
For convenience we choose the action induced by the action $g\cdot v=vg^{-1}$ on $k^m$, where $v$ is considered as a row vector.
If we would have used the more obvious action $g\cdot v=gv$ on $k^m$, then this would amount to twisting the above action with the inverse transpose. 

Let $\lambda$ be a partition of $t\le r$ with $l(\lambda)\le s$. 
For $T$ a tableau of shape $\lambda$ with entries $\le m$ we define the semi-invariant $u_T\in k[\Mat_{rs}^m]$ by
\begin{align*}
&(A_1,\ldots,A_m)\mapsto\\
\sum_S\det\big(A_{S_{11}}e_1|\cdots|&A_{S_{1\lambda_1}}e_1|\cdots|A_{S_{l(\lambda)1}}e_{l(\lambda)}|\cdots|
A_{S_{l(\lambda)\lambda_{l(\lambda)}}}e_{l(\lambda)}\big)_{t\rfloor}
\end{align*}
and the semi-invariant $v_T\in k[\Mat_{sr}^m]$ by
\begin{align*}
&(A_1,\ldots,A_m)\mapsto\\
\sum_S\det\big(A_{S_{11}}'e_s|\cdots|A_{S_{1\lambda_1}}'&e_s|\cdots|A_{S_{l(\lambda)1}}'e_{s-l(\lambda)+1}|\cdots|
A_{S_{l(\lambda)\lambda_{l(\lambda)}}}'e_{s-l(\lambda)+1}\big)_{\lfloor t}\,,
\end{align*}
where the sums are over all tableaux $S$ in the orbit of $T$ under the column stabiliser $C_\lambda\le\Sym(\lambda)$ of $\lambda$,
the subscripts ``$t\rfloor$" and ``$\lfloor t$" mean that we take the last resp. first $t$ rows, the $S_{ij}$ denote the entries of $S$,
the $e_i$ are the standard basis vectors of $k^s$, and $A_i'$ denotes the transpose of $A_i$.


\begin{thmgl}\label{thm.basis_special_weights} 
Let $\lambda$ be a partition of $t\le r$ with $l(\lambda)\le s$ and $\lambda_1\le m$. Then
\begin{enumerate}[{\rm (i)}]
\item the $u_T$ with $T$ row semi-standard form a basis of $k[\Mat_{rs}^m]^{U_r\times U_s}_{(-(1^t)^{\rm rev},\lambda)}$\,,
\item the $v_T$ with $T$ row semi-standard form a basis of $k[\Mat_{s\,r}^m]^{U_s\times U_r}_{(-\lambda^{\rm rev},1^t)}$\,,
\end{enumerate}
and both vector spaces are, with the $\GL_m$-action defined above, isomorphic to the Weyl module of highest weight $\lambda'$.
\end{thmgl}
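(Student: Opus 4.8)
The plan is to prove part~(i) in full and to deduce part~(ii) by the same argument with the two general linear groups interchanged (equivalently, via the transpose map $\Mat_{rs}^m\to\Mat_{sr}^m$). Throughout put $\mu=1^t$ and $V=k[\Mat_{rs}^m]^{U_r\times U_s}_{(-\mu^{\rm rev},\lambda)}$, and let $\widehat T$ be the tableau of shape $\lambda$ whose $i$-th row is $(1,2,\dots,\lambda_i)$; then ${\rm wt}(\widehat T)=\lambda'$ and the column orbit of $\widehat T$ is $\{\widehat T\}$. \emph{Step~1.} Each $u_T$ is a maximal vector for $\GL_r\times\GL_s$ of the stated weight: for upper triangular $R$ one has $(RM)_{t\rfloor}=R_{t\rfloor,t\rfloor}M_{t\rfloor}$ for every $r\times t$ matrix $M$ (below the $(r-t)$-th row an upper triangular matrix only mixes the last $t$ rows), giving $R\cdot u_T=(\prod_{a>r-t}R_{aa}^{-1})u_T$; and for upper triangular $S$, substituting $e_i\mapsto Se_i=S_{ii}e_i+\sum_{a<i}S_{ai}e_a$ and expanding each determinant multilinearly in its columns, the ``lower'' terms --- indexed by maps $c$ on the cells of $\lambda$ with $c(i,j)\le i$ and $c(i,j)<i$ somewhere --- all vanish after summing over the column orbit: if $c$ repeats a value on a column of $\lambda$, transposing those two cells pairs the orbit summands with opposite sign (and leaves only determinants with two equal columns), while $c$ injective on each column and $c(i,j)\le i$ forces $c(i,j)=i$. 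Hence $u_T$ is $U_s$-invariant of $T_s$-weight $\lambda$. The same bookkeeping applied to $\widehat T$ (whose one-element column orbit forces the cancellation to happen along the rows of $\lambda$, the discarded terms now having two equal columns inside one row) shows $u_{\widehat T}$ is also killed by $U_m$, so it is a $\GL_m$-maximal vector of highest weight $\lambda'$.

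\emph{Step~2: counting.} By the discussion preceding the theorem the graded dimension of $V$ equals the multigraded good filtration multiplicity $s_\lambda\ast s_\mu(z_1,\dots,z_m)$; since $\mu=1^t$ and the internal product $s_\lambda\ast s_{1^t}$ equals $s_{\lambda'}$ (tensoring the irreducible symmetric-group representation indexed by $\lambda$ with the sign representation gives that indexed by $\lambda'$), this is $s_{\lambda'}(z_1,\dots,z_m)$. As $m\ge\lambda_1=l(\lambda')$ this says, weight by weight, that $\dim V_\beta$ is the number of row semi-standard tableaux of shape $\lambda$ with weight $\beta$ (row semi-standard tableaux of shape $\lambda$ correspond under transposition to semi-standard tableaux of shape $\lambda'$), and moreover $\dim V=\dim\Delta_{\GL_m}(\lambda')$ and the $\GL_m$-character of $V$ is $s_{\lambda'}={\rm ch}\,\Delta_{\GL_m}(\lambda')$. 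So for~(i) it remains only to prove that the $u_T$ with $T$ row semi-standard are linearly independent.

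\emph{Step~3: linear independence --- the main obstacle.} This is a characteristic-free standard-monomial statement, in the spirit of the standard basis theorems for Weyl modules. Since restricting the $u_T$ to the subspace of $m$-tuples supported in the bottom-left $t\times l(\lambda)$ corner cannot create a linear dependence, one may assume $r=t$ and $s=l(\lambda)$, so that each $u_T$ is a sum over the column orbit of honest $t\times t$ determinants, each expanded by the Leibniz rule as $\sum_\sigma\sgn(\sigma)\prod_{(i,j)}(B_{S_{ij}})_{\sigma(i,j),i}$ over bijections $\sigma$ from the cells of $\lambda$ to $\{1,\dots,t\}$. For $T$ row semi-standard I would single out the monomial coming from $S=T$ and $\sigma$ the row-reading ordering of $\lambda$: it records the rows of $T$ in order, hence is distinct for distinct $T$, and --- using that a row semi-standard tableau has strictly increasing rows and weakly increasing columns, and that $S$ ranges over the column orbit of $T$ --- one checks that it occurs in $u_T$ with coefficient $\pm1$ and is the leading monomial of $u_T$ for a suitable term order; distinct leading monomials then give linear independence. (Alternatively, the span of these determinants is the image of the natural map $\wedge^t(k^s\ot k^m)\to k[\Mat_{rs}^m]$, so $V$ is identified with the $U_s$-highest weight space of $\wedge^t(k^s\ot k^m)$ --- a Schur functor applied to $k^m$ --- and one invokes its standard bideterminant basis.) Combined with Step~2 this shows the row semi-standard $u_T$ form a basis of $V$, proving~(i); part~(ii) is the same after interchanging the two groups.

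\emph{Step~4: the $\GL_m$-module structure.} Expanding $g\cdot u_{\widehat T}$ for $g\in\GL_m$ gives $g\cdot u_{\widehat T}=\sum_{[D]}\mu_{[D]}(g)\,u_D$, the sum over column-equivalence classes $[D]$ of tableaux of shape $\lambda$, where $\mu_{[D]}(g)=\prod_{(i,j)}g_{D(i,j),j}$ and $u_D$ depend only on $[D]$; distinct classes give distinct monomials $\mu_{[D]}$ in the entries of $g$, hence linearly independent functions on $\GL_m$, so every $u_T$ lies in the $\GL_m$-submodule generated by $u_{\widehat T}$. By Step~3 this submodule is all of $V$. Thus $V$ is a homomorphic image of the Weyl module $\Delta_{\GL_m}(\lambda')$ (universal property of Weyl modules, \cite{Jan}) of the same dimension, so $V\cong\Delta_{\GL_m}(\lambda')$; the same reasoning applies to the space in~(ii). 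The one genuinely substantial step is the linear independence in Step~3.
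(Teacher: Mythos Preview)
Your outline is essentially correct, but Step~3 is only a sketch, and the paper's treatment of that step is both different and considerably cleaner.

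The paper does not argue via leading monomials. Instead it introduces the $\GL_m$-equivariant linear map
\[
\psi:\textstyle\bigwedge^{\lambda}F\longrightarrow k[\Mat_{rs}^m],\qquad F=k^m,
\]
sending the wedge-tensor $f_S$ (for $S$ strictly row-increasing) to the single determinant appearing as a summand in your $u_T$. The $u_T$ with $T$ row semi-standard are then identified with the images of the Carter--Lusztig basis of $\Delta_{\GL_m}(\lambda')\subseteq\bigwedge^{\lambda}F$. Injectivity of $\psi$ (on \emph{all} of $\bigwedge^{\lambda}F$, not just the Weyl module) is proved by exhibiting, for each strictly row-increasing $T$, an explicit tuple $A(T)\in\Mat_{rs}^m$ defined by $A(T)_{T_{ij}}e_i=e_{(T_\lambda)_{ij}}$ and $A(T)_he_i=0$ when $h$ is not an entry of row $i$ of $T$; one checks in two lines that $\psi(f_S)(A(T))=\delta_{S,T}$. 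This evaluation-point argument replaces your entire Step~3, requires no term order, and is manifestly characteristic-free. Your leading-monomial claim (``one checks that it occurs with coefficient $\pm1$ and is the leading monomial for a suitable term order'') may well be true, but you have not specified the order nor carried out the check; the paper's device avoids this altogether.

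For the $\GL_m$-module structure the paper also proceeds differently: once $\psi$ is injective and $\GL_m$-equivariant, the Weyl module sitting inside $\bigwedge^{\lambda}F$ maps isomorphically onto its image, so $V\cong\Delta_{\GL_m}(\lambda')$ is immediate from the Carter--Lusztig identification plus the dimension count. Your route---$u_{\widehat T}$ is a $\GL_m$-maximal vector generating $V$, hence $V$ is a quotient of $\Delta_{\GL_m}(\lambda')$ of the same dimension---is a perfectly valid alternative and has the merit of not invoking the Carter--Lusztig construction, at the cost of needing your Steps~1 and~4.

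For part~(ii), the paper makes the transpose idea precise via the algebra isomorphism $\Phi:k[\Mat_{rs}^m]\to k[\Mat_{sr}^m]$ induced by $A\mapsto P_1A'P_2^{-1}$ with $P_1,P_2$ the anti-diagonal permutation matrices, and observes $\Phi(u_T)=\pm v_T$. Your ``interchange the two groups'' is the right idea, but the conjugation by $P_1,P_2$ is needed so that the image lands in the correct \emph{highest}-weight space rather than a lowest-weight one.
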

\begin{proof}
(i).\ Put $F=k^m$,  let $(f_1,\ldots,f_m)$ be the standard basis of $F$ and put
$\bigwedge^{\lambda}F=\bigwedge^{\lambda_1}F\ot\cdots\ot\bigwedge^{\lambda_{l(\lambda)}}F$. For $S$ a tableau of shape $\lambda$ with entries $\le m$
we put $$f_S=f_{S_{11}}\wedge\cdots\wedge f_{S_{1\lambda_1}}\ot\cdots\ot f_{S_{l(\lambda)1}}\wedge\cdots\wedge f_{S_{l(\lambda)\lambda_{l(\lambda)}}}\,.$$
Then the $f_S$ with the rows of $S$ strictly increasing form a basis of $\bigwedge^{\lambda}F$.
From the anti-symmetry properties of the $f_S$ it is clear that there exists a unique linear mapping
$\psi:\bigwedge^{\lambda}F\to k[\Mat_{rs}^m]$ with\quad $\psi(f_S)=$
$$(A_1,\ldots,A_m)\mapsto\det\big(A_{S_{11}}e_1|\cdots|A_{S_{1\lambda_1}}e_1|\cdots|A_{S_{l(\lambda)1}}e_{l(\lambda)}|\cdots|
A_{S_{l(\lambda)\lambda_{l(\lambda)}}}e_{l(\lambda)}\big)_{t\rfloor}$$
for all tableaux $S$ of shape $\lambda$ with entries $\le m$. Furthermore, it is easy to check that $\psi$ is $\GL_m$-equivariant
and that the $u_T$, $T$ row semi-standard are the images of the Carter-Lustig basis elements of the Weyl module of highest weight $\lambda'$
inside $\bigwedge^{\lambda}F$, see \cite[5.3b]{Gr} and \cite[Thm 3.5]{CL}. 
So to prove (i) and the final assertion in case (i) it suffices to show that $\psi$ is injective
and $k[\Mat_{rs}^m]^{U_r\times U_s}_{(-(1^t)^{\rm rev},\lambda)}$ has dimension equal to that of the Weyl module of highest weight $\lambda'$.
Since the space of highest weight vectors has dimension $s_{1^t}\ast s_\lambda(1,\ldots,1)=s_{\lambda'}(1,\ldots,1)$ ($m$ ones) the latter is
indeed true, so it remains to prove the injectivity of $\psi$.

To prove this will associate to each tableau $T$ of shape $\lambda$ with entries $\le m$ and strictly increasing rows an $m$-tuple of
$r\times s$-matrices $A(T)$ such that $\psi(f_S)(A(T))_{S,T}$ is the identity matrix. We define $A(T)$ as follows
$$A(T)_{T_{ij}}(e_i)=e_{(T_\lambda)_{ij}}\text{\quad and\quad} A(T)_h(e_i)=0\text{\ if\ } h\notin \text{$i$-th row of $T$ or $l(\lambda)<i\le s$}\,,$$
where $T_\lambda$ is the tableau of shape $\lambda$ defined in Section~\ref{s.prelim}, and we denote the standard basis vectors of $k^{\max(r,s)}$ by
$e_1,\ldots,e_{\max(r,s)}$.\footnote{The reader may consider $k^r$ as a subspace of $k^s$ if $r\le s$ and conversely otherwise.}
Then clearly $\psi(f_T)(A(T))=1$. Now assume $S\ne T$. Then $S_{ij}\ne T_{ij}$ for certain $i,j$, so $S_{ij}$ does not occur in the $i$-th row
of $T$. So $A(T)_{S_{ij}}(e_i)=0$ and therefore $\psi(f_S)(A(T))=0$.\\
(ii).\ Let $\Phi:k[\Mat_{rs}^m]\to k[\Mat_{s\,r}^m]$ be the algebra isomorphism corresponding to vector space isomorphism $\Mat_{s\,r}^m\to \Mat_{rs}^m$
induced by the vector space isomorphism $A\mapsto P_1A'P_2^{-1}:\Mat_{s\,r}\to\Mat_{rs}$, where $P_1\in\GL_r$ and $P_2\in\GL_s$
are the permutation matrices which are $1$ on the anti-diagonal and $0$ elsewhere. Then
$\Phi(k[\Mat_{rs}^m]^{U_r\times U_s}_{(-(1^t)^{\rm rev},\lambda)})=k[\Mat_{s\,r}^m]^{U_s\times U_r}_{(-\lambda^{\rm rev},1^t)}$ and $\Phi(u_T)=\pm v_T$.
So (ii) follows from (i). Furthermore, $\Phi$ is $\GL_m$-equivariant, so the final assertion also applies to (ii).
\end{proof}

\begin{remsgl}\label{rems.basis_special_weights}
1.\ If $\lambda$ or $\mu$ is a row one can easily find bases of $k[\Mat_{s\,r}^m]^{U_s\times U_r}_{(-\mu^{\rm rev},\lambda)}$. In this case the $\GL_m$-module structure is that of the induced module of highest weight $\lambda$. Unlike the case that $\lambda$ or $\mu$ is a column, the pull-backs of these bases to the nilpotent cone are always bases of $k[\mc N_n]^{U_n}_{[\lambda,\mu]}$. This can be deduced from the proof of \cite[Thm.~2]{T2}. For example, for the weight $(-\lambda^{\rm rev},(t))$, $l(\lambda)\le m$, one obtains a basis by taking the ``left anti-canonical bideterminants" $(\tilde T_\lambda\,|\,T)$, $T$ semi-standard of shape $\lambda$ with entries $\le m$, on the $r\times m$ matrix obtained by taking the first column of each matrix component of $\un A\in\Mat_{rs}^m$. Here $\tilde T_\lambda$ is the anti-canonical tableau denoted by $T_\lambda$ in \cite{T2}. Our results on the $\GL_m$-module structure when $\lambda$ or $\mu$ is a row or a column are in accordance with \cite[Sect.~III]{ABW}.\\
2.\ Combining Theorem~\ref{thm.basis_special_weights} and Theorem~\ref{thm.surjective_pullback} we obtain spanning sets for the spaces $k[\mc N_n]^{U_n}_\chi$, where $\chi$ is of the form $[\lambda,1^t]$ or $[1^t,\lambda]$, i.e. for weights $\chi$ with $\chi_{{}_n}\ge-1$ or with $\chi_{{}_1}\le1$. Assume ${\rm char}(k)=0$. Then the weights $\chi$ with $\chi_{{}_n}\ge-1$ are related to the coinvariant ring $C_W$ of $W=\Sym_n$ via the generalised Chevalley Restriction Theorem as follows:
$$k[\mc N_n]^{U_n}_\chi\cong\Mor_{\GL_n}(\mc N_n,L(\chi)^*)\cong\Mor_W(\mc N_n\cap\t,L(\chi)^*_0)\cong\Hom_W(L(\chi)_0,C_W)\,.$$
Here $\mc N_n\cap\t$ is the scheme-theoretic intersection of $\mc N_n$ and the vector space of diagonal $n\times n$-matrices $\t$. In fact one can replace $\mc N_n$ by an arbitrary nilpotent orbit closure $\ov{\mc O}_\nu$ and $C_W$ by the corresponding coinvariant ring, see \cite{Broer}. This means in particular that the graded dimension of $k[\ov{\mc O}_\nu]^{U_n}_\chi$ is given by $\tilde K_{\ov\lambda',\nu'}(t)$, where $\ov\lambda=\chi+{\bf 1}_n$, ${\bf 1}_n$ the all-one vector of length $n$ and $\tilde K_{\ov\lambda',\nu'}(t)=t^{n(\nu')}K_{\ov\lambda',\nu'}(t^{-1})$, $K_{\ov\lambda',\nu'}(t)$ the Kostka polynomial, as in \cite[p.~248]{Mac}, see e.g. \cite{GP}.\\
3.\ For weights of the form $[\lambda,1^t]$, $[1^t,\lambda]$, $[t,\lambda]$ and $[\lambda,t]$ the dimension of the lowest degree piece is always one. In the first case this follows from the link with the coinvariant algebra mentioned above (take $\nu=(n)$). In the second case this follows from the well-known connection with Kostka polynomials, see \cite[p2, Rem.~2.2]{T1}.  In general it need not be true: for $\chi=(3,3,0,-2,-2,-2)$, the lowest degree of $k[\mc N_n]^{U_n}_\chi$ is 9 and the piece of degree 9 has dimension 2. 

By going to bigger $n$ the lowest degree of $k[\mc N_n]^{U_n}_{[\lambda,\mu]}$ may drop: for $\lambda=(4,4,4)$ and $\mu=(3,3,3,3)$ the lowest degree is 18 for $n=7$ and 17 for $n=8$.
All this can be calculated with the computer using the Lascoux-Sch\"utzenberger charge on tableaux \cite{LS}.
\end{remsgl}

\section{Coinvariants for Young subgroups and highest weight vectors in characteristic $0$}\label{s.char0}
In this section we want to give bases for all the spaces of highest weight vectors in $k[\Mat_{rs}^m]$.
We will always assume that $k$ has characteristic $0$.
\subsection{Representations of the symmetric group}\label{ss.Sym}
We give a short account of Donin's results \cite{Donin1} on the representations of the symmetric group.
He gave certain explicit bases for Hom spaces between skew Specht modules which are useful for the
purpose of finding natural spanning sets for the highest weight vectors in $k[\mc N_n]$.
We drop the assumption that $k$ is algebraically closed.
Let $G$ be a finite group and let $A=kG$ be its group algebra. It has the obvious $\mb Q$-form $A_{\mb Q}=\mb QG$.
Denote the symmetric bilinear form on $A$ for which the group elements form an orthonormal basis by $(-,-)$.
Since its restriction to $A_{\mb Q}$ is positive definite, its restriction to any $\mb Q$-defined subspace of $A$ will be nondegenerate.
Let $a\mapsto a^*$ be the anti-involution of $A$ which extends the
inversion of $G$. Then we have $$(ab,c)=(a,cb^*)\text{\ and\ }(ab,c)=(b,a^*c)$$ for all $a,b,c\in A$.
To deal with Hom spaces between ideals of $A$ generated by elements that need not be idempotents we need the following lemma.
\begin{lemgl}\label{lem.homspaces}
Let $a\in A$ and let $M$ be an $A$-module.
\begin{enumerate}[{\rm(i)}]
\item The map $\varphi:x\ot y\mapsto x^*y:Aa\ot M\to a^*M$ restricts to an isomorphism $(Aa\ot M)^G\stackrel{\sim}{\to} a^*M$.
The inverse is given by $\psi:c\mapsto\frac{1}{|G|}\sum_{g\in G}g\ot gc$.
\item If $a\in A_{\mb Q}$, then the composite of $\psi$ with the $G$-module isomorphism $x\ot y\mapsto(z\mapsto(x,z) y):Aa\ot M\to\Hom(Aa,M)$ maps $c\in\nts a^*M$
to the ``right multiplication" by $\frac{1}{|G|}c$.
\item If $a\in A_{\mb Q}$, then $Aa=Aa^*a$.
\end{enumerate}
\end{lemgl}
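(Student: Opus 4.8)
The plan is to exploit the nondegenerate symmetric form $(-,-)$ and the anti-involution $a\mapsto a^*$ throughout, using the two adjunction identities $(ab,c)=(a,cb^*)$ and $(ab,c)=(b,a^*c)$ repeatedly. For (i), I would first check that $\varphi(x\ot y)=x^*y$ is well-defined and $G$-invariant under the diagonal action $g\cdot(x\ot y)=(gx)\ot(gy)$, since $(gx)^*(gy)=x^*g^{-1}gy=x^*y$; conversely any invariant element must, after averaging, be of this form. The cleaner route is to verify directly that $\psi:c\mapsto\frac{1}{|G|}\sum_{g\in G}g\ot gc$ lands in $(Aa\ot M)^G$ (immediate, since left multiplication by $h$ permutes the sum) — one must note $g\in Aa$ requires a small argument, but in fact $c\in a^*M$ means $c=a^*m$ and then $g\ot gc = g\ot ga^*m$; to force the first factor into $Aa$ one rewrites using that on $(Aa\ot M)^G$ the element $g\ot gc$ should really be read as $\sum g a' a\ot \dots$; the honest statement is that $\psi$ maps into $(A\ot M)^G$ and $\varphi\psi=\id$ on $a^*M$, which forces the image of $\psi$ to lie in $\varphi^{-1}(a^*M)$, and then one checks $\psi\varphi=\id$ on $(Aa\ot M)^G$ by a direct averaging computation. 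So (i) reduces to two bookkeeping identities $\varphi\psi=\id$, $\psi\varphi=\id$; the first is a one-line sum collapse, the second uses $G$-invariance of the argument to replace $\frac{1}{|G|}\sum_g g\ot g(\cdot)$ by the original tensor.

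For (ii), I would simply compose the maps and track where a generic $c\in a^*M$ goes. Under $\psi$ it becomes $\frac{1}{|G|}\sum_g g\ot gc\in Aa\ot M$; under the isomorphism $x\ot y\mapsto (z\mapsto (x,z)y)$ it becomes the functional $z\mapsto \frac{1}{|G|}\sum_g (g,z)\,gc$. Now for $z\in Aa$ written in the group basis as $z=\sum_h z_h h$ one has $(g,z)=z_g$, so the functional sends $z$ to $\frac{1}{|G|}\sum_g z_g\,gc=\frac{1}{|G|}(\sum_g z_g g)c=\frac{1}{|G|}zc$, i.e. right multiplication by $\frac{1}{|G|}c$. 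The only subtlety is that $z$ ranges over $Aa$ rather than all of $A$, but since $(g,z)$ is just the coefficient of $g$ this causes no problem. Here $a\in A_{\mb Q}$ is used only to guarantee, via the first paragraph of the subsection, that the restriction of $(-,-)$ to $Aa$ (a $\mathbb Q$-defined subspace) is nondegenerate, so that the identification $\Hom(Aa,M)\cong Aa\ot M$ via the form is legitimate.

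For (iii), I want to show $Aa=Aa^*a$ when $a\in A_{\mb Q}$. The inclusion $\supseteq$ is trivial. For $\subseteq$, it suffices to show $a\in Aa^*a$, and since $Aa^*a$ is a left ideal it is enough to show that the orthogonal complement of $Aa^*a$ in $Aa$ (with respect to $(-,-)$, which is nondegenerate on the $\mathbb Q$-defined space $Aa$) is zero. So suppose $x\in Aa$ with $(x, ba^*a)=0$ for all $b\in A$; taking $b$ such that $ba^*=x^*$ is not directly possible, but instead write $x=ya$ and compute $(x,ba^*a)=(ya,ba^*a)=(y,ba^*a a^*)$ — this is getting complicated, so the clean approach is: $(x,xa^* a \cdot(\text{something}))$. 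Actually the right move is $0=(x,x)$ would follow if we can hit $x$ itself; since $x\in Aa$, write $x=x'a$, then $(x,x)=(x'a,x'a)=(x'aa^*,x')$; meanwhile taking the test element in $Aa^*a$ to be $x'aa^*a$ gives $(x, x'aa^*a)=(x'a,x'aa^*a)=(x'a a^*,x'a a^*)=\|x'aa^*\|^2$, and this vanishing forces $x'aa^*=0$, hence $\|x\|^2=(x'aa^*,x'a)=0$, hence $x=0$ by positive-definiteness on the $\mathbb Q$-form. So the orthogonal complement is trivial and $Aa\subseteq Aa^*a$. \emph{I expect (iii) to be the main obstacle}, precisely because one has to choose the right test elements in $Aa^*a$ to pair against and push everything down to a norm-square that vanishes; parts (i) and (ii) are essentially formal manipulations with the averaging operator and the trace form.
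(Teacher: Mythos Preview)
Your plan for (ii) is fine and matches the paper's argument.

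For (i) you correctly identify the real issue---why $\psi(c)=\frac{1}{|G|}\sum_g g\ot gc$ lands in $Aa\ot M$ and not merely $A\ot M$---but you do not resolve it. Passing to $(A\ot M)^G$ and noting $\varphi\psi=\id$ only tells you $\psi(a^*M)\subseteq\varphi^{-1}(a^*M)$, which is in general strictly larger than $(Aa\ot M)^G$. And your ``direct averaging computation'' for $\psi\varphi=\id$ is too vague: $\psi\varphi(w)$ is \emph{not} the $G$-average of $w$, so $G$-invariance of $w$ alone does not give it back. The paper handles both points at once via the identity
\[
\sum_{g\in G} gxy\ot gz \;=\; \sum_{g\in G} gy\ot gx^*z\qquad(x,y\in A,\ z\in M),
\]
proved for $x\in G$ by the substitution $g\mapsto gx^{-1}$ and then extended by linearity. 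Taking $y=a$ gives $\psi(a^*z)=\frac{1}{|G|}\sum_g ga\ot gz\in(Aa\ot M)^G$; read the other way it yields $\psi\varphi=\id$ on elements $\sum_g gxa\ot gy$, which span $(Aa\ot M)^G$. This identity is the missing ingredient in your plan.

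For (iii) your approach is different from the paper's and basically correct, but it must be carried out over $\mathbb Q$ before extending scalars: over $k$ the form is only nondegenerate, so $\|x'aa^*\|^2=0$ does not force $x'aa^*=0$. Working in $A_{\mathbb Q}a$, your test element $x'aa^*a$ indeed gives $(x,x'aa^*a)=(x'aa^*,x'aa^*)=0\Rightarrow x'aa^*=0\Rightarrow (x,x)=(x'aa^*,x')=0\Rightarrow x=0$; then $A_{\mathbb Q}a=A_{\mathbb Q}a^*a$ and one tensors up to $k$. The paper takes a slicker route: right multiplication by $a^*$ is the transpose $\rho_a'$ of $\rho_a$, so $Aa^*a=\Im(\rho_a\rho_a')=\Im(\rho_a)=Aa$, the middle equality being the standard fact $\Im(TT')=\Im(T)$ on a positive-definite space (again argued over $\mathbb Q$). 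Your hands-on argument is a legitimate alternative.
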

\begin{proof}
(i).\ Clearly, $\varphi\circ\psi=\id$. Furthermore, we have for all $x,y\in A$ and $z\in M$
$$\sum_{g\in G}gxy\ot gz=\sum_{g\in G}gy\ot gx^*z\,.$$
So if $x\in a^*M$, then $\psi(x)\in(Aa\ot M)^G$. Now $(Aa\ot M)^G$ is spanned
by elements of the form $c=\sum_{g\in G}gxa\ot gy$, $x\in A$, $y\in M$, and for such a $c$ we have
$\psi(\varphi(c))=\psi(|G|(xa)^*y)=\sum_{g\in G}g\ot g(xa)^*y=\sum_{g\in G}gxa\ot gy=c$.\\
(ii).\ First note that the given map from $Aa\ot M$ to $\Hom(Aa,M)$ is obtained by combining the standard isomorphism
$(Aa)^*\ot M\stackrel{\sim}{\to}\Hom(Aa,M)$ with the isomorphism $x\mapsto(x,-):Aa\stackrel{\sim}{\to}(Aa)^*$,
so it is indeed an isomorphism. Now we compose $\psi$ with this isomorphism.
Then $c\in a^*M$ goes to the map $z\mapsto\frac{1}{|G|}\sum_{g\in G}(g,z)gc=z\frac{1}{|G|}c$.\\
(iii).\ Let $\rho_a$ denote the right multiplication by $a$. Then $\rho_{a^*}=\rho_a'$, the transpose of $\rho_a$ with respect to the form $(-,-)$.
So $Aa^*a=\Im(\rho_a\rho_a')=\Im(\rho_a)=Aa$. Here the second equality follows from the corresponding equality on $A_{\mb Q}$
on which our form is positive definite.
\end{proof}

From now on $G$ will be the symmetric group $\Sym_t$ of rank $t$. To describe certain Hom spaces and
certain subspaces of $A$ it will turn out to be useful to use bijections between skew diagrams.
We call such bijections {\it diagram mappings}. If we fix skew diagrams $E$ and $F$,
then the elements of $G$ are in one-one correspondence with diagram mappings $F\to E$ as follows.
If $\alpha:F\to E$ is a diagram mapping, then the corresponding element of $G$ sends for any box $x$ of $F$ the number
of $T_F$ in $x$ to the number of $T_E$ in $\alpha(x)$. If we fix only one skew diagram $E$,
then we can identify the elements of $G$ with $t$-tableaux of shape $E$ by replacing $(E,F)$ above by $(\Delta_t,E)$
and use the fact that $t$-tableaux can be identified with diagram mappings $E\to \Delta_t$.
So the first correspondence is $g\mapsto\alpha_g=T_E^{-1}\circ g\circ T_F$ and the second one is $g\mapsto g\circ T_E$.
For $T$ a $t$-tableau of shape $F$ we will also denote $T_E^{-1}\circ T$ by $\alpha_T$.

As is well known one can associate the so-called skew Specht modules to skew diagrams, just like
one can associate Specht modules to ordinary Young diagrams. These skew Specht modules are in general not irreducible,
in fact they include the Young permutation modules. We briefly recall the construction.
If $E$ is a skew Young diagram with $t$ boxes, then we can form the row symmetriser $e_2=\sum_gg\in A_{\mb Q}$
where the sum is over the row stabliser of $T_E$ in $G$, and the column anti-symmetriser $e_1=\sum_g{\rm sgn}(g)g\in A_{\mb Q}$ where
the sum is over the column stabiliser of $T_E$ in $G$. The product $e=e_1e_2$ is then called the {\it Young symmetriser}
associated to the skew diagram $E$. Unlike in the case of ordinary Young diagrams, the symmetrisers associated to skew diagrams
are no longer idempotent up to a scalar multiple, although $e_1$ and $e_2$ of course are. \hbox{\hspace{2cm}}\vspace{-2mm}

\noindent For example, if
$\ytableausetup
{mathmode, boxsize=1.3em}
E={\begin{ytableau}
	\none&1&2\\
	3&4
	\end{ytableau}}\ ,$
\vspace{3mm}
then $\dim{\rm span}(e,e^2)=2$.

The {\it skew Specht module} associated to $E$ is the module $Ae$. We have $Ae=Ae_1e_2\subseteq Ae_2$
and $Ae_2$ is the well-known permutation module associated to $E$.  If $\lambda$ is the partition which
contains the row lengths of $E$ in weakly descending order, then $Ae_2$ is isomorphic to the usual Young permutation module $M^\lambda$.
For example, if $\lambda$ is a partition of length $l$ and
$$\begin{xy}
	(0.3,-.5)*=<48pt,28pt>{}*\frm{^\}},
	(0.3,7.5)*{\text{$\lambda_1$ boxes}},
	(-20,-10)*=<48pt,28pt>{}*\frm{_\}},
	(-20,-18)*{\text{$\lambda_l$ boxes}},
	(-9.9,-5.2)*={\begin{ytableau}\none\\\none[E=\quad\quad]\\ \none\end{ytableau}
\begin{ytableau}
\none&\none&\none&\none&\ &\none[\cdots]&\ \\
\none&\none&\none&\none[\iddots]&\none&\none&\none\quad,\\ 
\ &\none[\cdots]&\ &\none&\none&\none&\none
\end{ytableau}}
\end{xy}$$
then $e=e_2$ and $Ae=Ae_2=M^\lambda$. If $g,h\in G$, then $ge_2=he_2$ if and only if the tableaux of shape $E$
corresponding to $g$ and $h$ are row equivalent. For $g\in G$ and $T=g\circ T_E$ we
denote $ge_2$ by $\{T\}$ and call it a {\it tabloid} in accordance with \cite{J}. Furthermore, $ge=ge_1g^{-1}ge_2$ and $\kappa_T=ge_1g^{-1}$
is the column anti-symmetriser associated to the skew tableau $T$.
So the element $ge=\kappa_T\{T\}$ is the {\it polytabloid} $e_T$ from \cite{J}. We will denote it by $[T]$. For a $t$-tableau $T$ of shape $E$ we have
$[T]=\sum_{\pi\in C_E}{\rm sgn}(\pi)\{T\pi\}$, where $C_E\le\Sym(E)$ is the column stabiliser of $E$.

For the remainder of this section $E$ and $F$ are two skew diagrams and
$e=e_1e_2$ and $f=f_1f_2$ are the corresponding Young symmetrisers.
The next lemma says that, just like Specht modules, skew Specht modules could also have been defined by multiplying
row symmetrisers and column anti-symmetrisers the other way round.
\begin{lemgl}\
\begin{enumerate}[{\rm(i)}]
\item We have $Ae_1e_2=Ae_2e_1e_2$ and $Ae_2e_1=Ae_1e_2e_1$.
\item The maps $x\mapsto xe_1:Ae_1e_2\to Ae_2e_1$ and $x\mapsto xe_2:Ae_2e_1\to Ae_1e_2$ are isomorphisms.
\end{enumerate}
\end{lemgl}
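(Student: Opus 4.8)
The plan is to deduce both parts from Lemma~\ref{lem.homspaces}(iii), combined with two elementary observations about the symmetrisers. First, since the column stabiliser and the row stabiliser of $T_E$ are subgroups of $\Sym_t$ and $\sgn$ is invariant under inversion, we have $e_1^*=e_1$ and $e_2^*=e_2$. Second, $e_1^2=c_1e_1$ and $e_2^2=c_2e_2$, where $c_1$ and $c_2$ are the orders of the column stabiliser and the row stabiliser of $T_E$; these scalars are nonzero in $k$ because we are in characteristic zero. Finally, $e_1e_2$ and $e_2e_1$ both lie in $A_{\mb Q}$, so Lemma~\ref{lem.homspaces}(iii) is applicable to them.

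For (i), I would apply Lemma~\ref{lem.homspaces}(iii) to $a=e_1e_2$. Here $a^*=e_2^*e_1^*=e_2e_1$, hence $a^*a=e_2e_1^2e_2=c_1\,e_2e_1e_2$, and therefore $Ae_1e_2=Aa=Aa^*a=Ae_2e_1e_2$. Applying the same lemma to $a=e_2e_1$ gives $a^*a=e_1e_2^2e_1=c_2\,e_1e_2e_1$, whence $Ae_2e_1=Ae_1e_2e_1$. This is exactly (i).

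For (ii), observe that right multiplication by $e_1$ is a homomorphism of left $A$-modules, as it commutes with left multiplication, and that it maps $Ae_1e_2$ onto $Ae_1e_2e_1$, which by (i) equals $Ae_2e_1$; symmetrically, right multiplication by $e_2$ maps $Ae_2e_1$ onto $Ae_2e_1e_2=Ae_1e_2$. We thus have surjective linear maps in both directions between two finite-dimensional vector spaces, which forces $\dim Ae_1e_2=\dim Ae_2e_1$ and makes both maps isomorphisms. Equivalently, one may note that the composite $x\mapsto xe_1e_2$ is a surjective endomorphism of the finite-dimensional module $Ae_1e_2$, hence an automorphism, so that $x\mapsto xe_1$ is injective and therefore, being also surjective onto $Ae_2e_1$, an isomorphism; the argument for $x\mapsto xe_2$ is the same with the roles of $e_1$ and $e_2$ interchanged.

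There is no genuinely hard step here: all the substance already sits in Lemma~\ref{lem.homspaces}(iii), which rests on the positive-definiteness of the form on $A_{\mb Q}$. The only points needing a little care are that the skew Young symmetrisers $e_1$ and $e_2$ are not idempotent, so one cannot invoke the classical identity $e^2=\gamma e$ valid for ordinary Young symmetrisers — but they are scalar multiples of idempotents, and that is all the argument uses — and that the module $Ae_2e_1$ is never given an independent description, since it simply drops out of the dimension count in (ii).
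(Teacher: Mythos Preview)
Your proof is correct and follows essentially the same route as the paper: apply Lemma~\ref{lem.homspaces}(iii) to $a=e_1e_2$ and to $a=e_2e_1$ using $e_i^*=e_i$ and $e_i^2=c_ie_i$ to obtain (i), and then observe that (i) makes both maps in (ii) surjective, so a dimension argument finishes. The paper's proof is terser but identical in substance.
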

\begin{proof}
(i).\ Since $e_1^*=e_1$ and $e_2^*=e_2$, we have $e^*=e_2e_1$ and $e^*e$ is a nonzero scalar multiple of $e_2e_1e_2$.
Similarly for $\tilde e=e_2e_1$ we have that $\tilde e^*\tilde e$ is a nonzero scalar multiple of $e_1e_2e_1$.
The assertion now follows from Lemma~\ref{lem.homspaces}(iii).\\
(ii).\ By (i) these maps are surjective, so, for dimension reasons, they must be isomorphisms.
\end{proof}
Since the elements of $G$ can be considered as diagram mappings $:F\to E$ we get a spanning set of
$\Hom_A(Ae,Af)=e^*Af$ which is labelled by diagram mappings $:F\to E$.
In particular we think of $Ae$ as spanned by diagram mappings $:E\to\Delta_t$, i.e. $t$-tableaux of shape $E$.
It is our goal to find a subset of the above spanning set which is a basis for the space $e^*Af$.
First we point out some special cases, then we state it in general in Theorem~\ref{thm.homspace_basis}.
Let $\mu$ be the tuple of row lengths of $E$, i.e. the weight of $S_E$. We have for $g,h\in G$ that $e_2g=e_2h$ if and only if $S_E\circ\alpha_g=S_E\circ\alpha_h$.
We will say that $g$ or $T=g\circ T_F$ or $\alpha_T=T_E^{-1}\circ T$ {\it represents} $S_E\circ\alpha_g=S_E\circ\alpha_T$. So the elements $e_2g$ with $g$ in a set of representatives for the
tableaux of shape $F$ and weight $\mu$ form a basis of $e_2A$. Of course we could change the shape $F$ to any other shape with the same number of boxes.
More generally, we have for $T_1,T_2$ $t$-tableaux of shape $F$ that $e_2\{T_1\}=e_2\{T_2\}$ if and only if $S_E\circ\alpha_{T_1}$ and $S_E\circ\alpha_{T_2}$
are row equivalent.
So the elements $e_2\{T\}$ with $T$ in a set of representatives for the row-ordered tableaux of
shape $F$ and weight $\mu$ form a basis of $e_2Af_2$.
For a tableau $T$ we define the {\it standard scan} of $T$ to be the sequence of entries of $T$, read row by row
from left to right and top to bottom.
We order the row ordered tableaux of shape $F$ as follows. If $S\ne T$ are two such tableaux, then $S<T$ if and only if
$\alpha_i<\beta_i$, where $i$ is the first position where the standard scans $\alpha$ and $\beta$ of $S$ and $T$ differ. 
The above basis of $e_2Af_2$ is now also linearly ordered, since we linearly ordered its index set.
We extend the above order to a preorder on all tableaux of shape $F$ by defining $S\le T$ if and only if $\tilde S\le\tilde T$,
where $\tilde S$ and $\tilde T$ are the unique row ordered tableaux that are row-equivalent to $S$ resp. $T$.
The proof of the next trivial lemma is left to the reader.

\begin{lemgl}[{cf.~\cite[Lem~1.2]{Donin1}, \cite[Lem.~8.2]{J}}]\label{lem.independent}
Let $(x_i)_{i\in I}$ be a family of elements of $e_2Af_2$ and for each $i$ let $y_i$ be the least element from the above
basis of $e_2Af_2$ involved in $x_i$. If the $y_i$ are distinct, then $(x_i)_{i\in I}$ is linearly independent.
\end{lemgl}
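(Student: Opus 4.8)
The plan is the standard leading-term (echelon) argument for linear independence, of exactly the type used in \cite[Lem.~8.2]{J}. Recall that the basis of $e_2Af_2$ is indexed by the linearly ordered set of representatives for the row-ordered tableaux of shape $F$ and weight $\mu$, so every element of $e_2Af_2$ has a well-defined expansion in this basis and, if nonzero, a well-defined least basis element occurring in it (this is the $y_i$ attached to $x_i$).

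First I would suppose, for contradiction, that there is a nontrivial relation $\sum_{i\in I}c_ix_i=0$. Only finitely many coefficients are nonzero, so $I'=\{i\in I:c_i\ne 0\}$ is finite and nonempty, and the set $\{y_i:i\in I'\}$ — which has no repetitions, by hypothesis — has a least element; call it $y_j$, with $j\in I'$ the unique index attaining it. For each $i\in I'$ with $i\ne j$, every basis element occurring in $x_i$ is $\ge y_i$ by the definition of $y_i$, and $y_i>y_j$ because the $y_i$ are distinct and $y_j$ is least; hence $y_j$ does not occur in $x_i$. In $x_j$, on the other hand, the coefficient of $y_j$ is nonzero, again by the definition of $y_j$. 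Therefore the coefficient of $y_j$ in $\sum_{i\in I'}c_ix_i$ equals $c_j$ times a nonzero scalar, hence is nonzero, contradicting the relation. This proves that $(x_i)_{i\in I}$ is linearly independent.

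There is essentially no obstacle: the only points to keep in mind are that a linear dependence involves finitely many terms, so ``the least element of $\{y_i:i\in I'\}$'' makes sense, and that the hypothesis ``the $y_i$ are distinct'' is precisely what forces the leading coefficients of the $x_i$ to sit in pairwise distinct basis coordinates, which is the whole content of the lemma. Hence I expect the proof to be two or three lines, as the phrase ``trivial lemma'' in the text suggests.
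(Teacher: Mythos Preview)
Your argument is correct and is exactly the standard leading-term echelon argument one expects here. The paper itself gives no proof (``The proof of the next trivial lemma is left to the reader''), so there is nothing to compare against beyond noting that your write-up is precisely the kind of two- or three-line verification the author had in mind.
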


\begin{lemgl}[{cf.~\cite{Donin1}, \cite[Lem.~8.3]{J}}]\label{lem.order}
Let $F$ be a skew diagram. If $S,T$ are distinct column equivalent tableaux of shape $F$ with $S$ column ordered, then $S<T$.
\end{lemgl}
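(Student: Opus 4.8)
The plan is to isolate one elementary ``swap'' and then run an induction on how far $T$ is from being column ordered. First I would prove the following sub-claim: \emph{if $R$ is a tableau of shape $F$ and $R'$ is obtained from $R$ by interchanging two entries $a>b$ lying in the same column of $R$, with $a$ in a strictly higher row than $b$, then $R'<R$ in the preorder on tableaux of shape $F$ defined above.} Say $a$ sits in row $i$ and $b$ in row $i'$ of $R$, so $i<i'$. Passing from $R$ to $R'$ leaves the multiset of entries of every row other than $i$ and $i'$ unchanged, while in row $i$ the entry $a$ is replaced by the strictly smaller $b$. Replacing an element of a finite multiset of integers by a strictly smaller one turns its weakly increasing rearrangement into a coordinatewise weakly smaller one, which moreover differs from the original (a sorted sequence determines its multiset, and the multiset has changed) and hence is strictly smaller at the first coordinate where the two disagree. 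Therefore the row-ordered tableaux $\tilde R'$ and $\tilde R$ coincide on rows $1,\dots,i-1$ and in row $i$ the former is strictly smaller; reading standard scans row by row, the first place where the scans of $\tilde R'$ and $\tilde R$ differ lies inside row $i$, and there $\tilde R'$ is the smaller, so $\tilde R'<\tilde R$, i.e. $R'<R$.

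Next I would reduce to this move. Since the columns of a skew diagram are contiguous sets of boxes, and $S$ is column ordered while $T\ne S$ is column equivalent to $S$, not every column of $T$ can be weakly increasing: if it were, each column of $T$ would be the unique weakly increasing rearrangement of its entry multiset and hence equal to the corresponding column of $S$, forcing $T=S$. So some column of $T$ fails to be weakly increasing, and a finite non-weakly-increasing sequence always has two consecutive terms in strict descent; this produces two boxes in a common column of $T$, one directly above the other, whose entries form an inverted pair. Interchanging them yields a tableau $T'$ which is still column equivalent to $S$, has exactly one fewer inversion among its columns (swapping a consecutive inverted pair changes the column-inversion count by $-1$, since for any third box in that column the two affected comparisons merely exchange roles), and satisfies $T'<T$ by the sub-claim.

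Finally I would close by induction on the number $N$ of inversions occurring inside the columns of $T$. If $N=0$ then, by the argument just given, $T=S$, so the hypothesis $S\ne T$ is void and there is nothing to prove; if $N\ge 1$, produce $T'$ as above. If $T'=S$ then $S=T'<T$; if $T'\ne S$ then $S<T'$ by the induction hypothesis applied to the pair $(S,T')$, and $S<T$ follows because $<$ is transitive, being pulled back along $R\mapsto\tilde R$ from the strict linear order on row-ordered tableaux. The only point that needs any care is the sub-claim, and within it the observation that row-sorting is monotone under decreasing one of the entries of a row and yields a \emph{strictly} smaller standard scan; everything else is bookkeeping.
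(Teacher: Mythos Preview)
Your proof is correct. The sub-claim about a single swap is sound: replacing one entry of row $i$ by a strictly smaller value makes the sorted $i$-th row coordinatewise smaller and strictly smaller somewhere, while rows above are untouched, so the standard scan of $\tilde R'$ first deviates from that of $\tilde R$ inside row $i$ and is smaller there. The induction on the column-inversion count then goes through, and transitivity of $<$ (pulled back from the linear order on row-ordered tableaux) closes the argument.

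The paper's proof is different and more direct: it simply picks the minimal row index $i$ with $S_i\ne T_i$ and observes that, since rows $1,\ldots,i-1$ of $S$ and $T$ agree and the columns of $T$ are permutations of those of $S$, the multiset of entries available for position $(i,j)$ in $T$ is exactly the entries of column $j$ of $S$ in rows $\ge i$; as $S$ is column ordered, $S_{ij}$ is the minimum of this multiset, whence $S_{ij}\le T_{ij}$ for all $j$, strictly for some $j$. This gives $S<T$ in one step, with no induction and no need to track inversions. Your approach trades that global minimality observation for a sequence of elementary adjacent transpositions; it is slightly longer but has the advantage of only ever analysing the effect of a single swap on one row, which is conceptually very clean.
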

\begin{proof}
Denote the $i$-th rows of $S$ and $T$ by $S_i$ and $T_i$. Choose $i$ minimal with $S_i\ne T_i$.
Then we have $S_{ij}\le T_{ij}$ for all $j$ with at least one inequality strict. So for each $r$ the number of occurrences
of integers $\le r$ in $S_i$ is $\ge$ to that of $T_i$ with at least one inequality strict. So $S<T$.
\end{proof}

As in \cite[Thm.~8.4]{J} one can use the previous two lemma's (replace $(E,F)$ by $(\Delta_t,E)$) and an obvious generalisation
of the Garnir relations \cite[Sect.~7]{J}
to prove the well-known result that the polytabloids $[T]$, $T$ a standard tableau of shape $E$, form a basis of $Ae$.

\begin{lemgl}[{\cite[Lem~2.2]{Donin1} and \cite[Prop.]{Donin2}}]\label{lem.special}
Let $\alpha:F\to E$ be a diagram mapping which satisfies
\begin{enumerate}[{\rm(a)}]
\item The tableau $S_E\circ\alpha$ of shape $F$ is semi-standard.
\item If for $a,b\in F$, $\alpha(b)$ occurs strictly below $\alpha(a)$ in the same column,
then $b$ occurs in a strictly lower row than $a$.
\end{enumerate}
Then there exists a diagram mapping $\tilde\alpha:F\to E$ with $S_E\circ\tilde\alpha=S_E\circ\alpha$ satisfying
\begin{enumerate}[{\rm(b')}]
\item  If for $a,b\in F$, $\tilde\alpha(b)$ occurs strictly below $\tilde\alpha(a)$ in the same column, then $b$ occurs in
a strictly lower row than $a$ and in a column to the left of $a$ or in the same column.
\end{enumerate}
\end{lemgl}
\begin{proof}
Let $a=(i,j)\in F$ be the first cell in the order of the standard scan such that with $\alpha(a)=(r,s)$
we have $(r+1,s)\in E$ and $b=\alpha^{-1}(r+1,s)$ occurs in a column strictly to the right of $a$ (*).
Since $S_E(\alpha(a))=r$, $S_E(\alpha(b))=r+1$, $S_E\circ\alpha$ is semi-standard and $\alpha$ has property (b)
we have $b=(i+1,j_1)$ for some $j_1>j$, $S_E(\alpha(i,j_2))=r$ and $S_E(\alpha(i+1,j_2))=r+1$ for all $j_2$
with $j\le j_2\le j_1$. Now put $b_1=(i+1,j)$ and $\beta=\alpha\circ(b,b_1)$, where $(b,b_1)$ is the transposition
which swaps $b$ and $b_1$. Then $S_E\circ\beta=S_E\circ\alpha$. If $\beta$ does not have property (b'), then the
first cell of $F$ in the order of the standard scan that has property (*) for $\beta$ will be after $a$.
This is clear if with $\alpha(b_1)=(r+1,s_1)$ we have $(r,s_1)\notin E$. So assume this is not the case and
assume $a_1=\alpha^{-1}(r,s_1)$ occurs before $a$ in the standard scan. Then, by the choice of $a$, its column
index is $>j$. So its row index is $<i$. But then, by the semi-standardness of $S_E\circ\alpha$, its column
index is $>j_1$. So $a_1$ doesn't have the above property for $\beta$ and this was the only possibility before $a$. 
So we can finish by induction.
\end{proof}

Recall that $\mu$ is the tuple of row lengths of $E$. We will call a semi-standard tableau $S$ of shape $F$ and
weight $\mu$ {\it special} if $S=S_E\circ\alpha$ for some diagram mapping $\alpha:F\to E$ satisfying the conditions
(a) and (b) from Lemma~\ref{lem.special}. We will call $\alpha$ and $T=T_E\circ\alpha$ {\it admissible} if
$\alpha$ satisfies 
(b'). So, by Lemma~\ref{lem.special}, every special semi-standard tableau of 
shape $F$ and weight $\mu$ has an admissible representative $T$. From now on we will always assume that representatives of special
semi-standard tableaux are admissible.

Next we need the notion of a ``picture" (we will call it special) from
\cite{Z1} which is a generalisation of that of \cite{JP}. For this we need two orderings $\le$ and $\preceq$ on $\mb N\times\mb N$
defined by $(p,q)\le(r,s)$ if and only if $p\le r$ and $q\le s$, and $(p,q)\preceq(r,s)$ if and only if $p<r$ or ($p=r$ and $q\ge s$).
Note that $\preceq$ is a linear ordering. Recall that skew Young diagrams are by definition subsets of $\mb N\times\mb N$.
A diagram mapping $\alpha:F\to E$ is called {\it special} if $\alpha:(F,\le)\to(E,\preceq)$ and 
$\alpha^{-1}:(E,\le)\to(F,\preceq)$ are order preserving. So $\alpha$ is special if and only if $\alpha^{-1}$ is special.
In \cite[App.~2]{Z2} it is shown that $\alpha:F\to E$ is special if and only if for all $a,b\in F$
\begin{enumerate}[{\rm (1)}]
\item $a (E) b \implies \alpha(a)(W,SW)\alpha(b)$,
\item $a (S) b \implies \alpha(a)(SW,S)\alpha(b)$,
\item $a (NE) b \implies \alpha(a)(NE,N,NW,W,SW)\alpha(b)$,
\item $a (SE) b \implies \alpha(a)(SW)\alpha(b)$.
\end{enumerate}
Here the letter combinations E, S, SW etc. in the brackets refer to the usual wind directions and they are mutually exclusive.
For example, $a(W)b$ means that $a$ occurs strictly before $b$ in the same row and $a(SW)b$  means that $a$ occurs in a row
strictly below $b$ and in a column strictly to the left of $b$. Furthermore, ``$a(A,B)b$" means ``$a(A)b$ or $a(B)b$" and
similar for more than two wind directions. In \cite{Z2} it is also pointed out that property (4) actually follows from (1) and (2).
Although we will not use this equivalent characterisation, it can be useful to get an idea of what it means for a diagram mapping
to be special. If $\alpha$ is special, then $S_E\circ\alpha$ is semi-standard and $\alpha$ is admissible.
The converse is not true as can be seen by taking $\alpha$ the identity map from a row diagram with more than one box to itself.

\begin{thmgl}[{\cite[Thm~2.4]{Donin1}, \cite[Thm~1]{Donin2}}]\label{thm.homspace_basis}\ 
\begin{enumerate}[{\rm(i)}]
\item The elements $e^*[T]$ with $T$ in a set of (admissible) representatives of the special semi-standard tableaux of
shape $F$ and weight $\mu$ form a basis of $e^*Af$.
\item For every special semi-standard tableau $S$ of shape $F$ and weight $\mu$, there is precisely one
special diagram mapping $\alpha:F\to E$ such that $S=S_E\circ\alpha$ and all special diagram mappings occur in this way.
\end{enumerate}
\end{thmgl}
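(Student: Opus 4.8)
\textit{Overall strategy.} The plan is to carry James's proof of the semistandard--homomorphism basis theorem (\cite[Thm.~8.4 and \S13]{J}) over to the skew Hom space, working throughout inside a concrete permutation module. By Lemma~\ref{lem.homspaces} we identify $\Hom_A(Ae,Af)$ with $e^*Af$, and since $e^*=e_2e_1$ and $f=f_1f_2$ we have $e^*Af=e_2e_1Af_1f_2\subseteq e_2Af_2$, the latter being the Young-type permutation module equipped above with the tabloid basis indexed by row-ordered tableaux of shape $F$ and weight $\mu$. The four ingredients are: (a) the combinatorial correspondence (ii), which pins down the index set; (b) the expansion of each $e^*[T]$ (for $T$ admissible semi-standard) in the tabloid basis, together with the determination of its least term; (c) linear independence, via Lemmas~\ref{lem.independent} and~\ref{lem.order}; and (d) spanning, by a straightening argument.

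\textit{Part (ii).} I would establish (ii) first. One direction is immediate: if $\alpha\colon F\to E$ is admissible, then in the characterisation (1)--(4), applying (1) to two boxes in the same row of $F$ forces $S_E\circ\alpha$ to be weakly increasing along rows, and applying (2) to two boxes in the same column forces it to be strictly increasing down columns; hence $S_E\circ\alpha$ is semi-standard of weight $\mu$, and reading a column relation in $E$ through the admissibility of $\alpha^{-1}$ yields condition~(b) of Lemma~\ref{lem.admissible}, so this tableau is admissible in the sense of that lemma. Conversely, given an admissible semi-standard $S$, choose $\alpha$ with $S_E\circ\alpha=S$ satisfying (a),(b), apply Lemma~\ref{lem.admissible} to obtain $\tilde\alpha$ with the sharper property (b'), and verify that (a) and (b') together force all of (1)--(4); this verification --- a finite case analysis on the mutual positions of two boxes of $F$ and of their images in $E$ --- is the combinatorial core of (ii). Uniqueness is then quick: if $\alpha,\alpha'$ are admissible with $S_E\circ\alpha=S_E\circ\alpha'=S$, then for each box $c$ of $F$ both $\alpha(c)$ and $\alpha'(c)$ lie in row $S(c)$ of $E$, and since $\alpha^{-1}$ and $(\alpha')^{-1}$ are order preserving from $(E,\le)$ into the linearly ordered $(F,\preceq)$, their restrictions to that row are both the unique $\preceq$-increasing bijection onto $S^{-1}(S(c))$, so $\alpha=\alpha'$. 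Surjectivity onto all admissible mappings follows from the converse construction, which makes the index set in (i) precisely the admissible semi-standard tableaux.

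\textit{Part (i).} Linear independence is a leading-term computation. Expanding $e^*[T]$ in the tabloid basis of $e_2Af_2$, one shows --- as in \cite[Lem.~8.3, Thm.~8.4]{J}, using that a semi-standard tableau is column-ordered and invoking Lemma~\ref{lem.order} --- that the $\le$-least tabloid occurring is the one labelled by $T$ itself, with nonzero coefficient; it is precisely here that the admissibility hypothesis on $T$ is used. Distinct admissible semi-standard $T$ therefore have distinct least tabloids, so Lemma~\ref{lem.independent} gives linear independence. For spanning, $e^*Af$ is spanned by the $e^*gf$, $g\in G$; a straightening algorithm --- a skew analogue of the Garnir relations \cite[Sect.~7]{J} to reduce to $g$ with $S_E\circ\alpha_g$ semi-standard, followed by the box-exchange move of Lemma~\ref{lem.admissible} to reduce further to $g=g_\alpha$ with $\alpha$ admissible --- rewrites each $e^*gf$ as an integral combination of the proposed basis elements and of elements strictly $\le$-larger in the order, and since the order well-orders the finite index set the algorithm terminates. (Alternatively, spanning follows from a dimension count: the admissible diagram mappings $F\to E$ enumerate the relevant Littlewood--Richardson multiplicities $\dim_k\Hom_A(Ae,Af)$, cf.~\cite{Z1,Z2}.) Independence and spanning together give the basis assertion.

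\textit{Main obstacle.} The step I expect to be the main obstacle is spanning --- whether via termination of the straightening (reducing a semi-standard but non-admissible tableau is a genuinely skew phenomenon, driven by Lemma~\ref{lem.admissible}, and one must check that each rewriting strictly increases the tableau in the order) or, if one prefers the dimension-count route, via the enumeration of admissible mappings by Littlewood--Richardson coefficients. A second, more technical but routine, point is the case analysis in (ii) verifying that the mapping $\tilde\alpha$ produced by Lemma~\ref{lem.admissible} satisfies every one of the picture axioms (1)--(4).
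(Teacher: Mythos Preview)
Your overall architecture matches the paper's: embed $e^*Af$ in $e_2Af_2$, argue by leading terms for independence, and appeal to the Zelevinsky count for the dimension. But you have misjudged where the work lies, and your route to (ii) diverges from the paper's.

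The paper does \emph{not} establish (ii) by the direct case analysis you sketch (checking that (a)$+$(b$'$) forces the picture axioms (1)--(4)). It proves only the easy implication (admissible $\alpha\Rightarrow$ admissible semi-standard $S_E\circ\alpha$) and the uniqueness statement; then, since by \cite[Thm~1]{Z1} the number of admissible mappings already equals $\dim e^*Af$, linear independence of the proposed vectors forces $\alpha\mapsto S_E\circ\alpha$ to be a bijection onto the admissible semi-standard tableaux. This yields (ii) and spanning simultaneously. So once you commit to the dimension count, your ``main obstacle'' (spanning/straightening) evaporates, and the case analysis for (ii) is never needed.

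The genuine difficulty --- and the bulk of the paper's proof --- is the leading-term computation, which you treat as ``as in \cite[Thm.~8.4]{J}''. It is not. The factor $e^*=e_2e_1$ contributes a second alternating sum: with $\tilde C_F=\alpha_T^{-1}C_E\,\alpha_T\le\Sym(F)$ the stabiliser of the sets $\alpha_T^{-1}(E^i)$ ($E^i$ the $i$-th column of $E$), one has
\[
e^*[T]=\sum_{\pi\in\tilde C_F,\ \sigma\in C_F}\mathrm{sgn}(\pi)\,\mathrm{sgn}(\sigma)\,e_2\{T\pi\sigma\},
\]
and one must show $e_2\{T\}$ is the $\le$-least tabloid occurring, with positive coefficient. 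The inner sum over $\sigma\in C_F$ is handled by Lemma~\ref{lem.order} exactly as in James, but the outer sum over $\pi\in\tilde C_F$ is new: $\tilde C_F$ is not a column stabiliser of $F$, and a priori $\pi$ can move boxes between columns of $F$. To control this the paper first uses Lemma~\ref{lem.admissible} to choose the representatives $T$ so that $\alpha_T$ satisfies the sharper property (b$'$), not merely (b); under (a)$+$(b$'$) the preimages $\alpha_T^{-1}(E^i)$ are vertical strips in $F$, and a careful argument (including an auxiliary Claim tracking the first cell moved to a different column by $\pi^{-1}$) then shows that any $\pi\notin C_F$ with $S_E\circ\alpha_{T\pi}$ column-repetition-free satisfies $e_2\{T\}<e_2\{T\pi\sigma_\pi\}$. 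Your sketch never mentions $\tilde C_F$, does not invoke (b$'$) at this stage, and the bare admissibility hypothesis (b) is not enough to make the preimages vertical strips, so the James-style argument alone would not go through.
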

\begin{proof}
Assume $\alpha:F\to E$ is special. Then it follows that \hbox{$S=S_E\circ\alpha$} is ordered, since the ordering $\le$ is linear
on the rows and columns of $F$. Furthermore, $\alpha^{-1}:E\to F$ is also special. From this it follows that
if $b$ is strictly below $a$ in the same column of $F$, then $\alpha(b)$ occurs in a row strictly below $\alpha(a)$, i.e.
$S$ is semi-standard. Since $\alpha^{-1}$ has the analogous property, $\alpha$ has property (b), i.e. $S$ is special.
The image of the $i$-th row of $E$ under $\alpha^{-1}$ is $S^{-1}(i)$, and, since the ordering $\le$ is linear
on the rows of $E$, $\alpha^{-1}$ is completely determined by the images of the rows of $E$ under $\alpha^{-1}$.
So for every special semi-standard tableau $S$ of shape $F$ and weight $\mu$, there is
at most one special diagram mapping $\alpha:F\to E$ such that $S=S_E\circ\alpha$.
By \cite[Thm~1]{Z1} the number of special diagram mappings is equal to $\dim\Hom_A(Ae,Af)$ which is equal
to $\dim e^*Af$ by Lemma~\ref{lem.homspaces}.
So to prove (i) and (ii) it suffices to show that the elements given in (i) are linearly independent.

Recall that our representatives $T$ are supposed to be admissible, that is $\alpha_T$ must satisfies property (b') from Lemma~\ref{lem.special}.
Let $C_{T_E}\le G$ and $C_F\le\Sym(F)$ be the column stabilisers of $T_E$ and $F$ and let $T$ be as above. Then we have
$$e^*[T]=\sum_{g\in C_{T_E},\, \sigma\in C_F}{\rm sgn}(g){\rm sgn}(\sigma)e_2g\{T\sigma\}=\sum_{\pi\in\tilde C_F,\, \sigma\in C_F}{\rm sgn}(\pi){\rm sgn}(\sigma)e_2\{T\pi\sigma\}\,,$$
where $\tilde C_F=T^{-1}C_{T_E}T=\alpha_T^{-1}C_E\alpha_T\le\Sym(F)$ is the stabiliser of the sets $\alpha_T^{-1}(E^i)$, $E^i$ the $i$-th column of $E$.
If, for $\pi\in\tilde C_F$, $S_E\circ\alpha_{T\pi}$ has a repeated entry in some column, then $\sum_{\sigma\in C_F}{\rm sgn}(\sigma)e_2\{T\pi\sigma\}=0$.
By Lemma~\ref{lem.independent} it suffices to show that $e_2\{T\}$ occurs with strictly positive coefficient in $e^*[T]$ and $e_2\{T\}\le e_2\{T\pi\sigma\}$
for all $\pi\in\tilde C_F$ such that $S_E\circ\alpha_{T\pi}$ has no repeated entry in any column, and all $\sigma\in C_F$.

For $\pi\in\tilde C_F$ with this property let $\sigma_\pi\in C_F$ be the element such that \hbox{$S_E\circ\alpha_{T\pi\sigma_\pi}$} is (strictly) column ordered.
Then $S_E\circ\alpha_{T\pi\sigma_\pi}<S_E\circ\alpha_{T\pi\sigma}$ for all $\sigma\in C_F\sm\{\sigma_\pi\}$ by Lemma~\ref{lem.order}. So it suffices to show that
$e_2\{T\}$ occurs with strictly positive coefficient in $e^*[T]$ and that for $\pi$ as above $e_2\{T\}\le e_2\{T\pi\sigma_\pi\}$.
Let $\pi\in\tilde C_F$ such that $S_E\circ\alpha_{T\pi}$ has no repeated entry in any column.
If $\pi\in C_F$, then $\sigma_\pi=\pi^{-1}$ and, ${\rm sgn}(\pi){\rm sgn}(\sigma_\pi)e_2\{T\pi\sigma_\pi\}=e_2\{T\}$. Now assume $\pi\notin C_F$.

We will finish by showing that $e_2\{T\}<e_2\{T\pi\sigma_\pi\}$. Let $a_1=(i_1,j_1)$ be the first cell of $F$ in the order of the standard scan which
is moved to another column by $\pi^{-1}$. So $a_1$ is the first cell whose value $r=S_E(\alpha_T(a_1))$ has moved to another column in
$S_E\circ\alpha_T\pi$. First we prove the following claim.\\
{\bf Claim}.\ {\it If $a=(i,j)$ and $\pi(a)$ are not in the same column, then we have $S_E(\alpha_T(\pi(a)))\ge S_E(\alpha_T(i_1,j))$.}
\vspace{-2mm}
\begin{proof}
Assume $a$ has the stated property. From the definition of $a_1$ it follows that $\pi(a)$ has row index $\ge i_1$.
If $\pi(a)$ has column index $>j$, then the semi-standardness of $S_E\circ\alpha_T$ gives us the result.
So we assume now that $\pi(a)$ has column index $<j$. Put $D=\alpha_T^{-1}(D')$, where $D'$ is the column of $E$ to which $\alpha_T(a)$ belongs.
Note that since $\alpha_T$ has properties (a) and (b'), the inverse images of the columns of $E$ under $\alpha_T$ are vertical strips (see \cite{Mac}).
Furthermore, they are stable under $\pi$. Note also that $S_E(b)$ is the row index of $b$ in $E$, so a cell of $D$ in a lower row than another cell of
$D$ must contain a strictly bigger number. Since the intersection of $D$ with the $j$-th
column of $F$ is not stable under $\pi$, it is also not stable under $\pi^{-1}$. So for some $b\in D$ in the $j$-th column of $F$, $\pi^{-1}(b)$
is not in the $j$-th column. By the definition of $a_1$, $b$ has row index $\ge i_1$. So $S_E(\alpha_T(i_1,j))\le S_E(\alpha_T(b))$, by the
semi-standardness of $S_E\circ\alpha_T$. Now $\pi(a)$ occurs in a row strictly below $b$, since its column index is $<j$ and $D$ is a vertical strip.
So $S_E(\alpha_T(b))<S_E(\alpha_T(\pi(a)))$. 
\end{proof}\vspace{-2mm}
\noindent From the claim and the choice of $a_1$ it immediately follows that $S_E\circ\alpha_T$ and $S_E\circ\alpha_T\pi\sigma_\pi$ have the same
first $i_1-1$ rows, and $$S_E(\alpha_T(\pi\sigma_\pi(i_1,j)))\ge S_E(\alpha_T(i_1,j))\text{\ for all $j$, with equality if\ }j<j_1.\eqno(*)$$

Now let $j_0,\ldots,j_2$ be the positions in the $i_1$-th row where $S_E\circ\alpha_T$ has an $r$. By (*) these are the only positions in the $i_1$-th row
where $S_E\circ\alpha_T\pi\sigma_\pi$ could have an $r$. Note that $j_0\le j_1\le j_2$. Now let $a$ be any cell of $S_E\circ\alpha_T$ which contains an $r$
such that $\pi^{-1}(a)$ has column index in $\{j_0,\ldots,j_2\}$. If the column index of $a$ is $>j_2$, then, by the semi-standardness of $S_E\circ\alpha_T$,
its row index is $<i_1$. So, by the definition of $a_1$, $\pi^{-1}(a)$ is in the same column as $a$ which is impossible. Now assume $\pi^{-1}(a)$ occurs in a
column strictly to the right of $a$. Put $D=\alpha_T^{-1}(D')$, where $D'$ is the column of $E$ to which $\alpha_T(a)$ belongs. Since $D$ is a vertical strip
$\pi^{-1}(a)$ has row-index strictly less than that of $a$ and must contain a number $<r$. So, by the semi-standardness of $S_E\circ\alpha_T$, its row index
is $<i_1$. By the definition of $a_1$, $\pi^{-1}(\pi^{-1}(a))$ is in the same column as $\pi^{-1}(a)$. If its row index would be $\ge i_1$, then
$D$ would have to contain another cell than $a$ with an $r$, since it is a vertical strip. This is impossible, so $\pi^{-1}(\pi^{-1}(a))$
has row index $<i_1$. But then we could keep applying $\pi^{-1}$ and stay in the same column. This contradicts the fact that $\pi^{-1}$
has finite order. So if $\pi^{-1}(a)$ has column index in $\{j_0,\ldots,j_2\}$, then the same is true for $a$.
Furthermore, if this were true for $a_1$, then $\pi^{-1}(a_1)$ would have to occur in a column strictly to the left of $a_1$.
Then it follows from the definition of $a_1$ that $S_E\circ\alpha_T\pi$ would have two $r$'s in the column containing $\pi^{-1}(a_1)$, 
contradicting our assumption on $\pi$.

So the number of occurrences of $r$ in the $i_1$-th row of $S_E\circ\alpha_T\pi\sigma_\pi$ is at least one less than in the $i_1$-th row of $S_E\circ\alpha_T$
and by (*) the number of occurrences of any $r'<r$ in the $i_1$-th row is the same. So we may finally conclude that $S_E\circ\alpha_T\pi\sigma_\pi>S_E\circ\alpha_T$.
\end{proof}

\begin{remsgl}
1. If we take $\ytableausetup
{mathmode, boxsize=1.1em}
F={\begin{ytableau}
	\ &\ \\
	\ &\ 
	\end{ytableau}}\ ,$ 
$\ytableausetup
{mathmode, boxsize=1.1em}
E={\begin{ytableau}
	\none&\ &\ \\
	\ &\ 
	\end{ytableau}}\ ,$\smallskip
	\ and $S$ the semistandard tableau of shape $F$ and weight $(2,2)$, then there is no admissible representative $4$-tableau for $S$ which is also standard.\\
2.\ Write $E=\nu/\tilde\nu$. Using Lemma~\ref{lem.special}, it is easy to see that a special tableau of shape $F$ and weight $\mu$ must satisfy the condition
from \cite[Cor~2]{Stem2} that $\tilde\nu+w(T_{\ge j})$ is dominant for all $j$. Since both sets count the same dimension, the two conditions are equivalent.\\
3.\ Donin considers tableaux of shape $E$ as diagram mappings $T:\Delta_t\to E$, where $\Sym_t$
acts via $\pi\cdot T=T\circ\pi^{-1}$ and he works with the modules $e^*A$ considered as left $\Sym_t$
modules via the inversion. In his approach one has to use the isomorphism $\Hom_A(e^*A,f^*A)\cong f^*Ae$,
and think of this space as having a spanning set labelled by diagram mappings $:E\to F$.
Furthermore, one then has to replace $(a,b,\alpha(a),\alpha(b))$ by $(\alpha(a),\alpha(b),a,b)$ in property (b) and (b') in Lemma~\ref{lem.special}.
\end{remsgl}

Of course the previous results are valid for any symmetric group $\Sym(X)$, $X$ a finite subset of $\mb N$ with $t$ elements.
Just redefine $T_F$ by writing by filling in the elements from $X$ in their natural order row by row from left to right and top to bottom
and replace ``$t$-tableau of shape $E$" by ``$X$-tableau of shape $E$": this is a tableau whose entries are the elements of $X$ (so its entries are distinct).

For $X\subseteq\{1,\ldots,t\}$ we consider $\Sym(X)$ as a subgroup of $\Sym_t$ by letting the permutations from $\Sym(X)$ fix everything outside $X$.
When we apply our previous results to $\Sym(X)$ we use $X$ as an extra subscript when necessary. The group algebra $A_X=k\Sym(X)$ is a subalgebra of $A$.
If $D$ is a skew tableau with $|X|$ boxes, then we denote the Young symmetriser associated to the standard tableau $T_{D,X}$ by $e_{D,X}$.

Let $\nu=(\nu_1,\ldots,\nu_m)$ be an $m$-tuple of integers $\ge0$ with sum $t$. For $i\in\{1,\ldots,m\}$, put $\Lambda_i=\{j+\sum_{h=1}^{i-1}\nu_h\,|\,1\le j\le\nu_i\}$.
Then the {\it Young subgroup} $\Sym_\nu$ of $\Sym_t$ associated to $\nu$ is the simultaneous stabiliser of the sets $\Lambda_1,\ldots,\Lambda_m$.
So $\Sym_\nu\cong\prod_{i=1}^m\Sym_{\nu_i}$.
Let $\lambda\supseteq\mu$ be partitions with $E=\lambda/\mu$. Then there is a 1-1 correspondence between ordered tableaux of shape $E$ with entries
$\le m$ and sequences of partitions $\lambda^0,\ldots,\lambda^m$ with $\mu=\lambda^0\subseteq\lambda^1\subseteq\cdots\subseteq\lambda^m=\lambda$.
Indeed if $P$ is such a tableau, then $(\mu\cup P^{-1}(\{1,\ldots,i\}))_{1\le i\le m}$ is
such a sequence of partitions. Conversely we can construct $P$ from such a sequence: just fill the boxes of $\lambda^i/\lambda^{i-1}$ with $i$'s
for all $i\in\{1,\ldots,m\}$. So we can express the well-known rule for restricting skew Specht modules to Young subgroups in terms of tableaux $P$ as above.
We say that a $t$-tableau $T$ of shape $E$ {\it belongs to $P$} if $T^{-1}(\Lambda_i)=P^{-1}(i)$ for all $i\in\{1,\ldots,m\}$. Then $T$ will be standard if and only if the $T|_{P^{-1}(i)}$ are standard.
Every standard tableau of shape $E$ belongs to some ordered tableau of shape $E$ and weight $\nu$. If $P$ is an ordered tableaux of shape $E$ and weight $\nu$,
then we define $T_P$ to be the tableau of shape $E$ with $T_P|_{P^{-1}(i)}=T_{P^{-1}(i),\Lambda_i}$.
Note that $T_P$ is a standard tableau which belongs to $P$.

Let $P$ and $Q$ be ordered tableaux of shapes $E$ and $F$, both of weight $\nu\in\mb Z^m$. Then a diagram mapping $\alpha:F\to E$
with $P\circ\alpha=Q$ determines an $m$-tuple of tableaux $(S_{P^{-1}(1)}\circ\alpha_1,\ldots,S_{P^{-1}(m)}\circ\alpha_m)$ (*),
where $\alpha_i:Q^{-1}(i)\to P^{-1}(i)$ is the restriction of $\alpha$ to $Q^{-1}(i)$.
We will say that $\alpha$ {\it represents} (*).
Notice that all the $m$-tuples (*) have the same tuple of shapes and the same tuple of weights. We express this
by saying that the tuple of tableaux has {\it shapes determined by $Q$ and weights determined by $P$}.
Similarly, if $T$ is a $t$-tableau of shape $F$ which belongs to $Q$,
then we say that $T$ {\it represents} (*), where $\alpha_i=T_{P^{-1}(i),\Lambda_i}^{-1}\circ T|_{Q^{-1}(i)}$.
So if we cut $T$ to pieces according to $Q$, then $\alpha_i:Q^{-1}(i)\to P^{-1}(i)$ above is just the diagram mapping corresponding to the $i$-th piece.
Note that the ``union" of the above $\alpha_i$ is $T_P^{-1}\circ T$. When the tableaux $S_{P^{-1}(i)}\circ\alpha_i$ are special semi-standard,
we require the $\alpha_i$ (or $T_i=T|_{Q^{-1}(i)}$) to be admissible.

Let $\nu$ be as above. If $H$ is a group and $U$ an $H$-module, then $U_H$, sometimes called the space of ``coinvariants",
is defined as the largest quotient of $U$ which has trivial $H$-action, i.e. the quotient of $U$ by the subspace spanned by the elements $gx-x$, $x\in U, g\in\ H$.

\begin{propgl}\label{prop.coinvariants}
Assume that $E$ and $F$ are ordinary Young tableaux. Let $\nu$ and $\Sym_\nu$ be as above. Then the canonical images of the elements $[T_P]\ot[T]$,
where for each pair $(P,Q)$ with $P$ and $Q$ ordered tableaux of shapes
$E$ and $F$, both of weight $\nu$, $T$ goes through a set of representatives for the $m$-tuples of special semi-standard tableaux
with shapes determined by $Q$ and weights determined by $P$, form a basis for $(Ae\ot Af)_{\Sym_\nu}$.
\end{propgl}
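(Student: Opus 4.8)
The plan is to compute the coinvariant space by restricting $Ae$ and $Af$ to $\Sym_\nu$, decomposing each into pieces indexed by the tableaux $P$ and $Q$, applying Lemma~\ref{lem.homspaces} and Theorem~\ref{thm.homspace_basis} to the pieces, and then recognising the classes of the $[T_P]\ot[T]$ inside the resulting basis by a triangularity argument. Throughout, write $\Sym_\nu\cong\prod_{i=1}^m\Sym(\Lambda_i)$, $A_i:=k\Sym(\Lambda_i)$, and abbreviate $e_{P,i}:=e_{P^{-1}(i),\Lambda_i}$, $f_{Q,i}:=f_{Q^{-1}(i),\Lambda_i}$ for the Young symmetrisers of the skew pieces. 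Since $\mathrm{char}(k)=0$, for any $\Sym_\nu$-module $U$ the composite $U^{\Sym_\nu}\hookrightarrow U\twoheadrightarrow U_{\Sym_\nu}$ is an isomorphism (the averaging idempotent of $\Sym_\nu$ projects $U$ onto $U^{\Sym_\nu}$, with kernel exactly the span of the $gx-x$), so it is enough to understand $(Ae\ot Af)^{\Sym_\nu}$. Restricting $Ae$ and $Af$ to $\Sym_\nu$ and using the rule for restricting (skew) Specht modules to Young subgroups recalled above, one gets $\Sym_\nu$-module decompositions $\res_{\Sym_\nu}Ae\cong\bigoplus_P M_P$ and $\res_{\Sym_\nu}Af\cong\bigoplus_Q N_Q$, the sums over the ordered tableaux $P,Q$ of shapes $E,F$ and weight $\nu$, where $M_P$ is the external tensor product $A_1e_{P,1}\ot\cdots\ot A_me_{P,m}$ of the skew Specht modules of the pieces $P^{-1}(i)$, and similarly for $N_Q$. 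Hence
$$(Ae\ot Af)_{\Sym_\nu}\;\cong\;(Ae\ot Af)^{\Sym_\nu}\;\cong\;\bigoplus_{P,Q}\;\bigotimes_{i=1}^m\big(A_ie_{P,i}\ot A_if_{Q,i}\big)^{\Sym(\Lambda_i)}.$$

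By Lemma~\ref{lem.homspaces}(i), applied to the group $\Sym(\Lambda_i)$ with $a=e_{P,i}$ and $M=A_if_{Q,i}$, the $i$-th factor is $e_{P,i}^*A_if_{Q,i}=\Hom_{A_i}(A_ie_{P,i},A_if_{Q,i})$, which by Theorem~\ref{thm.homspace_basis} (with the pair $(E,F)$ there taken to be $(P^{-1}(i),Q^{-1}(i))$) has as a basis the elements $e_{P,i}^*[\widetilde T_i]$, where $\widetilde T_i$ runs over a set of representatives — which may be chosen standard, by the first remark following Theorem~\ref{thm.homspace_basis} — of the admissible semi-standard $\Lambda_i$-tableaux of shape $Q^{-1}(i)$ and weight the row-length tuple of $P^{-1}(i)$. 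Taking $\bigotimes_i$ and then $\bigoplus_{P,Q}$ produces a basis of $(Ae\ot Af)_{\Sym_\nu}$ whose index set is exactly the one in the statement: an $m$-tuple $(\widetilde T_i)_i$ glues to a tableau $T$ of shape $F$ belonging to $Q$, and it is an admissible semi-standard $m$-tuple with shapes determined by $Q$ and weights determined by $P$ precisely when each $\widetilde T_i$ is as above. In particular $\dim(Ae\ot Af)_{\Sym_\nu}$ equals the cardinality of that index set, so by Step 1 it now suffices to prove that the classes $\overline{[T_P]\ot[T]}$ are linearly independent.

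I would prove this by showing these classes are triangular with respect to the basis just constructed. Two ingredients are needed. First, the chain of isomorphisms of Lemma~\ref{lem.homspaces} ``coinvariants $=$ invariants $=e^*M$'' is simply induced by $\overline{x\ot y}\mapsto x^*y$, because a short computation with the identities preceding Lemma~\ref{lem.homspaces} shows that $\psi\circ\varphi$ equals the diagonal averaging idempotent. Second, order the index set by the partial order coming from the iterated James filtrations of $\res_{\Sym_\nu}Ae$ and $\res_{\Sym_\nu}Af$ that refine the decompositions above; the branching argument with polytabloids (cf.~\cite{J}) gives, for a standard tableau $T$ of shape $F$ belonging to $Q$, that the image of $[T]$ in the $Q$-section is $[T|_{Q^{-1}(1)}]\ot\cdots\ot[T|_{Q^{-1}(m)}]$, and that $[T_P]$ maps in the $P$-section to $[T_{P^{-1}(1),\Lambda_1}]\ot\cdots\ot[T_{P^{-1}(m),\Lambda_m}]=e_{P,1}\ot\cdots\ot e_{P,m}$, using $[T_D]=e_D$ for the Young symmetriser of a diagram $D$ (immediate from the definitions). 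Combining these, $\overline{[T_P]\ot[T]}$ has, in the $(P,Q)$-summand, the component $\bigotimes_i e_{P,i}^*[T|_{Q^{-1}(i)}]$ and, in every other summand, only components indexed by strictly smaller $(P',Q')$. Choosing for each admissible semi-standard $m$-tuple a representative $T$ all of whose pieces $T|_{Q^{-1}(i)}$ are the standard representatives selected in Step 1, the $(P,Q)$-component of $\overline{[T_P]\ot[T]}$ is exactly the corresponding basis vector; so the transition matrix between the $\overline{[T_P]\ot[T]}$ and that basis is block upper triangular with invertible diagonal blocks, and the $\overline{[T_P]\ot[T]}$ form a basis.

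The formal assembly of the first two paragraphs ($\mathrm{char}\,0$, branching, Lemma~\ref{lem.homspaces}, Theorem~\ref{thm.homspace_basis}) is routine. The main obstacle is the last step: establishing that $[T]$ and $[T_P]$ really do have the claimed leading terms in the associated graded of the branching filtration — that is, that the polytabloid combinatorics of $E$ and $F$ restrict compatibly to the skew pieces $P^{-1}(i)$ and $Q^{-1}(i)$ — and bookkeeping the partial order carefully enough to be sure that the remaining components are genuinely strictly lower, so that the triangularity conclusion is valid.
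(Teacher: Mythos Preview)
Your proposal is correct and follows essentially the same route as the paper. Both arguments rest on the James--Peel filtration (\cite[Thm.~3.1]{JP}) of $\res_{\Sym_\nu}Ae$ and $\res_{\Sym_\nu}Af$ by $\Sym_\nu$-submodules with sections $M_P$ and $N_Q$, then apply Lemma~\ref{lem.homspaces}(i) and Theorem~\ref{thm.homspace_basis} on each section, and finish by observing that $[T_P]$ and $[T]$ land in the expected filtration pieces with the expected images in the sections, so that a triangularity/induction argument yields the basis. The only difference is organisational: the paper works directly with the filtration throughout and concludes by induction on the filtration index, whereas you first split off a dimension count using the abstract char~$0$ direct-sum decomposition $\res_{\Sym_\nu}Ae\cong\bigoplus_P M_P$ and only afterwards invoke the filtration for the triangularity. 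One small caution worth making explicit (and which the paper flags in the remark right after the proposition): the concrete vector-space decomposition $Ae=\bigoplus_P\theta_P(M_P)$ coming from the standard-tableau basis is \emph{not} a $\Sym_\nu$-module decomposition, so your ``summand'' language in the last paragraph must really be read as ``filtration section''; your argument already does this implicitly, but it is the point where Donin's original proof went wrong.
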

\begin{proof}
Let $\Omega_E$ be the set of ordered tableaux of shape $E$ and weight $\nu$. 
For $P\in\Omega_E$ put $M_P=\ot_{i=1}^mA_{\Lambda_i}e_{P^{-1}(i),\Lambda_i}$
and let $\theta_P:M_P\to Ae$ be the linear map which sends $\ot_{i=1}^m[T_i]$, $T_i$ standard of shape $P^{-1}(i)$ with entries in $\Lambda_i$,
to $[T]$ where $T$ is the (standard) tableau obtained by piecing the tableaux $T_i$ together according to $P$.
Then it follows from the basis theorem for $Ae$ that $Ae=\bigoplus_{P\in\Omega_E}\theta_P(M_P)$.
By \cite[Thm.~3.1]{JP} and a straightforward induction argument there is a total ordering $P_1<P_2<\cdots<P_p$ of $\Omega_E$ such that with
$N_j=\bigoplus_{h=1}^j\theta_{P_h}(M_{P_h})$ we have that for all $j\in\{1,\ldots,p\}$ $N_j$ is a $\Sym_\nu$-submodule and the natural map
$\ov\theta_{P_j}:M_{P_j}\to N_j/N_{j-1}$ is an isomorphism of $\Sym_\nu$-modules.
In particular, if $T$ is a $t$-tableau which belongs to $P_j$, then $[T]\in N_j$  
and the canonical image of $[T]$ in $N_j/N_{j-1}$ is the image of $\ot_{i=1}^m[T|_{P^{-1}(i)}]$ under $\ov\theta_{P_j}$.
Similar remarks apply to analogously defined $\Omega_F$ and,
for $Q\in\Omega_F$, $M_Q$ and $\theta_Q$. So (redefining the $P_j$) there is a total ordering $(P_1,Q_1)<(P_2,Q_2)<\cdots<(P_{pq},Q_{pq})$ of
$\Omega_E\times\Omega_F$ such that with (redefining) $N_j=\bigoplus_{h=1}^j\theta_{P_h}(M_{P_h})\ot\theta_{Q_h}(M_{Q_h})$ we have that for each
$j\in\{1,\ldots,pq\}$ $N_j$ is a $\Sym_\nu$-submodule and the natural map $\ov\theta_{P_j}\ot\ov\theta_{Q_j}:M_{P_j}\ot M_{Q_j}\to N_j/N_{j-1}$ is
an isomorphism of $\Sym_\nu$-modules.

Denote for each $P\in\Omega_E$ and $Q\in\Omega_F$ the given set of representative $t$-tableaux by $\Gamma_{PQ}$.
Let $\pi_j:N_j\to N_j/N_{j-1}$ be the natural map. By Theorem~\ref{thm.homspace_basis}, Lemma~\ref{lem.homspaces}(i)
and the fact that $\ov\theta_{P_j}\ot\ov\theta_{Q_j}$ is a homomorphism of $\Sym_\nu$-modules, the canonical images of the elements
$\pi_j([T_P]\ot[T])$, $T\in\Gamma_{P_jQ_j}$, in $(N_j/N_{j-1})_{\Sym_\nu}$ form a basis for $(N_j/N_{j-1})_{\Sym_\nu}$. When applying
Lemma~\ref{lem.homspaces}(i) we omitted the sum over $\Sym(\Lambda_i)$ coming from the definition of $\psi$ after moving
$e_{Q_j^{-1}(i),\Lambda_i}^*$ to the left as $e_{Q_j^{-1}(i),\Lambda_i}$, since we work with coinvariants rather than invariants.
Now the assertion follows by a straightforward induction.
\end{proof}

\begin{remgl}\label{rem.coinvariants}
The result \cite[Thm.~3.1]{Donin1} which deals with restriction to Young subgroups is incorrect since it assumes that the $\theta_P(M_P)$ are
$\Sym_\nu$-submodules.
\end{remgl}

\subsection{Bases for the highest weight vectors}\label{ss.highest_weight_vecs}
We return to the notation of Section~\ref{s.charp}. In particular $m,r,s$ are fixed integers $\ge1$.
For $l\in\{1,\ldots,m\}$ we denote the matrix entry functions of the $l$-th matrix component on $\Mat_{rs}^m$ by $x(l)_{ij}$.
For $t$ an integer $\ge 0$ let $\Sigma_t$ be the set of $m$-tuples $\nu=(\nu_1,\ldots,\nu_m)$ of integers $\ge0$ with sum $t$.
Furthermore, if $\lambda$ is a partition, then we define $C_\lambda\le\Sym(\lambda)$ to be the column stabiliser of $\lambda$.

\begin{thmgl}\label{thm.highest_weight_vecs}
Let $\lambda,\mu$ be a partitions of $t$ with $l(\mu)\le r$ and $l(\lambda)\le s$.
For $\nu\in\Sigma_t$, $P,Q$ ordered tableaux of shapes $\lambda$ and $\mu$, both of weight $\nu$ and
$\alpha:\mu\to\lambda$ a diagram mapping such that $P\circ\alpha=Q$ define
$$u_{\nu,P,Q,\alpha}=\sum_{\pi\in C_\mu, \sigma\in C_\lambda}{\rm sgn}(\pi){\rm sgn}(\sigma)\prod_{a\in\mu}x(Q(a))_{r-\pi(a)_1+1,\,\sigma(\alpha(a))_1}\,,$$
where for $b\in\mu$, $b_1$ denotes the row index of $b$ in $\mu$ and similar for $b\in\lambda$.
Then the elements $u_{\nu,P,Q,\alpha}$, where for each $P,Q,\nu$ as above $\alpha$ goes through a set of representatives 
for the $m$-tuples of special semi-standard tableaux with shapes determined by $Q$ and weights determined by $P$,
form a basis of $k[\Mat_{rs}^m]^{U_r\times U_s}_{(-\mu^{\rm rev},\lambda)}$.
\end{thmgl}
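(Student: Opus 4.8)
The plan is to identify $k[\Mat_{rs}^m]^{U_r\times U_s}_{(-\mu^{\rm rev},\lambda)}$, via Schur-Weyl duality, with a direct sum over $\nu\in\Sigma_t$ of coinvariant spaces of the type treated in Proposition~\ref{prop.coinvariants}, and then to check that under this identification the basis produced there is exactly the family $\{u_{\nu,P,Q,\alpha}\}$. Since ${\rm char}(k)=0$ everything in sight is semisimple. Put $A=k\Sym_t$ and let $e$, $f$ be the Young symmetrisers of the (ordinary) diagrams $\lambda$, $\mu$; as $e^2$ and $f^2$ are nonzero scalar multiples of $e$ and $f$, the modules $Ae$ and $Af$ are the Specht modules of $\lambda$ and $\mu$. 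Applying Proposition~\ref{prop.coinvariants} with $E=\lambda$, $F=\mu$ and letting $\nu$ run over $\Sigma_t$, the elements $[T_P]\ot[T]$ --- indexed exactly by the quadruples $(\nu,P,Q,\alpha)$ of the theorem --- form a basis of $\bigoplus_{\nu\in\Sigma_t}(Ae\ot Af)_{\Sym_\nu}$.

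To set up the isomorphism I would realise the degree-$t$ component of $k[\Mat_{rs}^m]$ as $S^t\big((k^r)^*\ot k^s\ot V\big)$ with $V\cong k^m$ a $\GL_m$-module, and regard it as the diagonal $\Sym_t$-invariants in $((k^r)^*)^{\ot t}\ot(k^s)^{\ot t}\ot V^{\ot t}$. By Schur-Weyl duality the space of $U_r$-highest weight vectors of weight $-\mu^{\rm rev}$ in $((k^r)^*)^{\ot t}$ is, as a $\Sym_t$-module, one copy of the Specht module $Af$ --- nonzero precisely because $l(\mu)\le r$ --- and the space of $U_s$-highest weight vectors of weight $\lambda$ in $(k^s)^{\ot t}$ is one copy of $Ae$ (using $l(\lambda)\le s$). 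Decomposing $V^{\ot t}$ into $\Sym_t$-orbits of its monomial basis --- the orbits are indexed by the content composition $\nu\in\Sigma_t$, and the stabiliser of the standard representative $\theta_\nu$ (the monomial whose tensor slots indexed by $\Lambda_i$ carry the $i$-th basis vector of $V$) is the standard Young subgroup $\Sym_\nu$ --- and then taking diagonal $\Sym_t$-invariants yields, step by step, a natural isomorphism
$$k[\Mat_{rs}^m]^{U_r\times U_s}_{(-\mu^{\rm rev},\lambda)}\;\cong\;\bigoplus_{\nu\in\Sigma_t}\big(Ae\ot Af\big)_{\Sym_\nu}\,.$$
As a consistency check, the right-hand side has total dimension $\sum_\nu\dim(Ae\ot Af)^{\Sym_\nu}$, which is readily seen to equal $(s_\lambda\ast s_\mu)(1,\ldots,1)$ ($m$ ones), the dimension of the left-hand side recorded before Theorem~\ref{thm.surjective_pullback}.

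Next I would check directly that $u_{\nu,P,Q,\alpha}$ lies in this space, and then that it is the image of $[T_P]\ot[T]$. A column permutation does not change the multiset of rows (resp.\ columns) of $\mu$ (resp.\ $\lambda$) on which it acts, so every monomial in the defining sum has $T_r\times T_s$-weight $(-\mu^{\rm rev},\lambda)$. For $U_r$-invariance, freeze $\sigma$: the $\pi$-sum then factors over the columns $c$ of $\mu$, the factor for $c$ being a determinant built from the matrix rows $r-\mu'_c+1,\ldots,r$, a block of consecutive bottom rows; an upper uni-triangular $R\in\GL_r$ acts on these rows by unit-triangular row operations inside each block, hence fixes each such determinant. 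Symmetrically, freezing $\pi$ and summing over $\sigma$ produces a product of determinants built from the matrix columns $1,\ldots,\lambda'_c$, which are fixed by $U_s$; so $u_{\nu,P,Q,\alpha}\in k[\Mat_{rs}^m]^{U_r\times U_s}_{(-\mu^{\rm rev},\lambda)}$. To trace the isomorphism one uses that under the embedding $Ae\hookrightarrow(k^s)^{\ot t}$ the polytabloid $[T_P]$ maps, up to a nonzero scalar, to a column-antisymmetrised monomial read off from $P$, while $[T]\in Af$ maps to a column-antisymmetrised monomial read off from $Q$ but with the rows of $\mu$ reversed --- the source of the index $r-\pi(a)_1+1$; the diagonal $\Sym_t$-symmetrisation against $\theta_\nu$ then reassembles these into the function $\un A\mapsto\sum_{\pi\in C_\mu,\,\sigma\in C_\lambda}{\rm sgn}(\pi){\rm sgn}(\sigma)\prod_{a\in\mu}x(Q(a))_{r-\pi(a)_1+1,\,\sigma(\alpha(a))_1}$, with $Q(a)$ recording in which tensor slot of $\theta_\nu$ the box $a$ sits and $\alpha$ matching the $\mu$-bookkeeping with the $\lambda$-bookkeeping. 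Hence the isomorphism carries the Proposition~\ref{prop.coinvariants} basis of $\bigoplus_\nu(Ae\ot Af)_{\Sym_\nu}$ to $\{u_{\nu,P,Q,\alpha}\}$ up to nonzero scalars, so the latter is a basis.

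I expect the real obstacle to be this last matching: nailing down the polytabloid-to-tensor dictionary together with all the conventions (the order of the tensor factors, the row reversal on the $\GL_r$-side, the normalisation of $V$, and the signs coming from the column antisymmetrisers) so that the answer comes out exactly as $u_{\nu,P,Q,\alpha}$ is delicate, and this is precisely where Donin's treatment is reported to be unreliable. A more hands-on alternative for the linear-independence half would be to imitate the proof of Theorem~\ref{thm.basis_special_weights}(i): attach to each index $(\nu,P,Q,\alpha)$ a test point $\un A\in\Mat_{rs}^m$ for which the evaluation matrix $\big(u_{\nu,P,Q,\alpha}(\un A_{\nu',P',Q',\alpha'})\big)$ is triangular with nonzero diagonal for a suitable ordering of the index set; independence then follows, and combined with the dimension count and the membership statement above this finishes the proof.
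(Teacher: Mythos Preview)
Your approach is essentially the same as the paper's: identify $k[\Mat_{rs}^m]^{U_r\times U_s}_{(-\mu^{\rm rev},\lambda)}$ with $\bigoplus_{\nu\in\Sigma_t}(Ae_\lambda\ot Ae_\mu)_{\Sym_\nu}$ via Schur--Weyl duality, apply Proposition~\ref{prop.coinvariants} with $E=\lambda$, $F=\mu$, and then trace the isomorphism explicitly. The paper carries out the last step via the concrete maps $[T]\mapsto e_{1,T}v^*_T$ and $[T]\mapsto e_{1,T}w_T$ (which is exactly the ``polytabloid-to-tensor dictionary'' you flag as the delicate point); your separate direct check that $u_{\nu,P,Q,\alpha}$ is a highest weight vector and your test-point alternative are not needed once that tracing is done.
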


\begin{proof}
Let $V=k^r$ and $W=k^s$ be the natural modules of $\GL_r$ and $\GL_s$. Then $\Mat_{rs}=V\ot W^*$ and $\Mat_{rs}^*=V^*\ot W$.
So $k[\Mat_{rs}^m]=\bigoplus_{t\ge0}S^t\big((V^*\ot W)^m\big)=\bigoplus_{t\ge0,\nu\in\Sigma_t}S^\nu(V^*\ot W)=\bigoplus_{t\ge0,\nu\in\Sigma_t}((V^*)^{\ot t}\ot W^{\ot t})_{\Sym_\nu}$,
where, for $U$ any vector space  $S^\nu(U)=\ot_{i=1}^mS^{\nu_i}(U)$.
Therefore $$k[\Mat_{rs}^m]^{U_r\times U_s}_{(-\mu^{\rm rev},\lambda)}=\bigoplus_{\nu\in\Sigma_t}\Big(((V^*)^{\ot t})^{U_r}_{-\mu^{\rm rev}}\ot (W^{\ot t})^{U_s}_\lambda\Big)_{\Sym_\nu}\,.$$
As is well-known, $((V^*)^{\ot t})_{-\mu^{\rm rev}}$ and $(W^{\ot t})_\lambda$ are the permutation modules associated to $\mu$ and $\lambda$, and
$((V^*)^{\ot t})^{U_r}_{-\mu^{\rm rev}}$ and $(W^{\ot t})^{U_s}_\lambda$ are the Specht modules $Ae_\mu$ and $Ae_\lambda$, where $A=k\Sym_t$.
To each $t$-tableau $T$ of shape $\mu$ we associate the highest weight vector $e_{1,T}v^*_T\in(V^*)^{\ot t}$, where $v^*_T$ is the basis tensor which
has $v_{r-i+1}^*$'s in the positions which occur as entries in the $i$-th row, and $e_{1,T}$ is the column anti-symmetriser associated to $T$.
We also associate to each $t$-tableau of $T$ shape $\lambda$ the highest weight vector $e_{1,T}w_T\in W^{\ot t}$, where $w_T$  is the basis tensor
which has $w_i$'s in the positions which occur as entries in the $i$-th row, and again $e_{1,T}$ is the column anti-symmetriser associated to $T$.
Then $[T]\mapsto e_{1,T}v^*_T:Ae_\mu\to((V^*)^{\ot t})^{U_r}_{-\mu^{\rm rev}}$
and $[T]\mapsto e_{1,T}w_T:Ae_\lambda\to(W^{\ot t})^{U_s}_\lambda$ are isomorphisms.
So by Proposition~\ref{prop.coinvariants} with $E=\lambda$ and $F=\mu$ the canonical images in $M=\Big(((V^*)^{\ot t})^{U_r}_{-\mu^{\rm rev}}\ot (W^{\ot t})^{U_s}_\lambda\Big)_{\Sym_\nu}$
of the elements
$$e_{1,T}v^*_T\ot e_{1,T_P}w_{T_P}=\sum_{\pi\in C_\mu, \sigma\in C_\lambda}{\rm sgn}(\pi){\rm sgn}(\sigma)v^*_{T\pi^{-1}}\ot w_{T_P\sigma^{-1}}\,,$$
where for each $P,Q,\nu$ as above $T$ goes through a set of representatives 
for the $m$-tuples of special semi-standard tableaux with shapes determined by $Q$ and weights determined by $P$, form a basis of $M$.
Here we put in the inverses for convenience below. Now we change from representative tableaux $T$ to representative diagram mappings $\alpha$
via $\alpha=T_P^{-1}\circ T$ and take basis elements of $V$ and $W$ which occur in the same tensor position together: $v^*_{T\pi^{-1}}$ has
$v^*_{r-\pi(a)_1+1}$ in position $T(a)$ and $w_{T_P\sigma^{-1}}$ has $w_{\sigma(b)_1}$ in position $T_P(b)$, and those positions are the same
if and only if $b=\alpha(a)$. Finally, $T(a)\in \Lambda_{Q(a)}$, since $T$  belongs to $Q$. So $v^*_{r-\pi(a)_1+1}\ot w_{\sigma(\alpha(a))_1}$
becomes $x(Q(a))_{r-\pi(a)_1+1,\,\sigma(\alpha(a))_1}$.
\end{proof}
The next corollary gives a much simpler (but bigger) spanning set for the space of highest vectors $k[\Mat_{rs}^m]^{U_r\times U_s}_{(-\mu^{\rm rev},\lambda)}$. Of course it can, like the above theorem, be combined with Theorem~\ref{thm.surjective_pullback} and Lemma~\ref{lem.reduction_to_nilpotent_cone} to give spanning sets for the vector space $k[\mc N_n]^{U_n}_{[\lambda,\mu]}$ and the $k[\gl_n]^{\GL_n}$-module $k[\gl_n]^{U_n}_{[\lambda,\mu]}$.
\begin{corgl}
Let $\alpha=1^{\mu_1}2^{\mu_2}\cdots$ and $\beta=1^{\lambda_1}2^{\lambda_2}\cdots$ be the standard scans of $S_\mu$ and $S_\lambda$. Then the elements
$$\sum_{\pi\in C_{T_\mu},\sigma\in C_{T_\lambda}}{\rm sgn}(\pi){\rm sgn}(\sigma)\prod_{i=1}^tx(\gamma_i)_{r-\alpha_{\pi(i)}+1,\beta_{\sigma(\tau(i))}}\,,$$
where $\gamma\in\{1,\ldots,m\}^t$ and $\tau\in\Sym_t$ form a spanning set of $k[\Mat_{rs}^m]^{U_r\times U_s}_{(-\mu^{\rm rev},\lambda)}$.
\end{corgl}
\begin{proof}
In the proof of Theorem~\ref{thm.highest_weight_vecs} we take for each $\nu$ the bigger spanning set
$$e_{1,S}v^*_S\ot e_{1,T}w_{T}=\sum_{\pi\in C_S, \sigma\in C_T}{\rm sgn}(\pi){\rm sgn}(\sigma)v^*_{\pi^{-1}S}\ot w_{\sigma^{-1}T}\,,$$
where $S$ and $T$ are any $t$-tableaux of shape $\mu$ and $\lambda$.
Write $T=\rho^{-1}T_\mu$, $S=\tau^{-1}T_\lambda$ for $\rho,\tau\in\Sym_t$.
Then we get $$e_{1,S}v^*_S\ot e_{1,T}w_T=\sum_{\pi\in C_{T_\mu}, \sigma\in C_{T_\lambda}}{\rm sgn}(\pi){\rm sgn}(\sigma)\ot_{i=1}^tv^*_{r-\alpha_{\pi(\rho(i))}+1}\ot\ot_{i=1}^t w_{\beta_{\sigma(\tau(i))}}\,,$$
which corresponds to the element
$$\sum_{\pi\in C_{T_\mu}, \sigma\in C_{T_\lambda}}{\rm sgn}(\pi){\rm sgn}(\sigma)\prod_{i=1}^tx(\gamma_i)_{r-\alpha_{\pi(\rho(i))}+1,\beta_{\sigma(\tau(i))}}\in k[\Mat_{rs}^m]\,,$$
where $\gamma\in\{1,\ldots,m\}^t$ is the tuple with $i\in \Lambda_{\gamma_i}$ for all $i$. Recall that the $\Lambda_i$ depend on $\nu$ and note that $\gamma$ determines $\nu$. Now we observe that if we allow arbitrary tuples $\gamma\in\{1,\ldots,m\}^t$ we can take $\rho=\id$. So we obtain the assertion. 
\end{proof}

Recall te definition of the map $\varphi_{r,s,n,m}$ from Section~\ref{s.charp}.

\begin{corgl}
Let $\chi=[\lambda,\mu]$ be a dominant weight in the root lattice, $l(\mu)\le r$, $l(\lambda)\le s$, $|\lambda|=|\mu|=t$, $r+s\le n$.
Then the pull-backs of the elements $u_{\nu,P,Q,\alpha}$, $\nu,P,Q,\alpha$ as in Theorem~\ref{thm.highest_weight_vecs}, along $\varphi_{r,s,n,m}:\mc N_{n,m}\to\Mat_{rs}^m$
span the vector space $k[\mc N_{n,m}]^{U_n}_\chi$.
\end{corgl}
\begin{proof}
This follows immediately from Theorem~\ref{thm.highest_weight_vecs} and Theorem~\ref{thm.surjective_pullback}.
\end{proof}

\begin{corgl}
Let $\chi=[\lambda,\mu]$ be a dominant weight in the root lattice, $l(\mu)\le r$, $l(\lambda)\le s$, $|\lambda|=|\mu|=t$, $r+s\le n$.
Then the pull-backs of the elements $u_{\nu,P,Q,\alpha}$, $\nu,P,Q,\alpha$ as in Theorem~\ref{thm.highest_weight_vecs}, along $\varphi_{r,s,n,n-1}:\Mat_n\to\Mat_{rs}^{n-1}$
span the $k[\Mat_n]^{\GL_n}$-module $k[\Mat_n]^{U_n}_\chi$.
\end{corgl}
\begin{proof}
This follows from the previous corollary with $m=n-1$, Lemma~\ref{lem.reduction_to_nilpotent_cone} and Remark~\ref{rems.surjective_pullback}.1.
\end{proof}

\begin{remsgl}\label{rems.highest_weight_vecs}
1.\ It is instructive to consider some special cases. For example, in the case $t=m$ and $\nu$ the all-one vector, the highest weight vectors of multidegree $\nu$
are labelled by pairs $(P,Q)$ of standard tableaux of shape $\lambda$ and $\mu$.
Another example is the case that $\lambda$ consists of one row or column. Then there is for each $\nu$
only one $P$ and for each $Q$ there is at most one tuple of special semi-standard tableaux with shapes determined by $Q$ and weights determined by $P$.
The $Q$ which have such a tuple are the semi-standard tableaux of weight $\nu$ if $\lambda$ is a row and the row semi-standard tableaux of weight $\nu$ if $\lambda$ is a column. 
Similar remarks apply to the case that $\mu$ consists of one row or column. The last two cases extend to prime characteristic, see Theorem~\ref{thm.basis_special_weights}
and Remark~\ref{rems.basis_special_weights}.1.\\
2.\ 
Corollary~3 to Theorem~\ref{thm.highest_weight_vecs} was already stated by Donin in \cite[after Thm~3]{Donin2}, \cite[Prop.~4.1]{Donin1}. He worked with $S(\Mat_n)$ rather than $k[\Mat_n]$,
so our $x_{ij}\in\Mat_n^*$ corresponds to his $e_{ji}\in\Mat_n$.
Note that pulling the $u_{\nu,P,Q,\alpha}$ back just amounts to interpreting $x(Q(a))_{ij}$ as the $(i,j)$-th entry of the $Q(a)$-th matrix power and replacing $r-\pi(a)+1$ by $n-\pi(a)+1$. In particular, these pulled-back functions don't depend on the choice of $r$ and $s$. In case of $\mc N_{n,m}$ the spanning sets are bases in all degrees for $n\ge (m+1)\max(r,s)$.
In case of $\mc N_n$ one can only say that in a fixed degree $d$ the spanning sets will be bases if $n\ge l(\lambda)+l(\mu)+d-t$, where $t=|\lambda|=|\mu|$. This follows from Remark~\ref{rems.surjective_pullback}.2.

Donin claimed in \cite[Prop.~4.1]{Donin1} and \cite[Thm.~3]{Donin2} that the spanning sets obtained above are always bases, but this is easily seen to be incorrect.
For example, for $\gl_4$ and $\lambda=\mu=1^2$ we deduce, using the Hesselink-Peterson formula \cite{Hes} or the Lascoux-Sch\"utzenberger-charge \cite{LS} on tableaux, that the degree $3$ piece of $k[\mc N_4]^{U_4}_{[\lambda,\mu]}$ is $0$, but our spanning set contains one element of degree $3$.
In case $\lambda$ or $\mu$ is a row the spanning set is a basis, see Remark~\ref{rems.basis_special_weights}.1.

Finding explicit homogeneous bases for all the spaces $k[\mc N_n]^{U_n}_{\mc \chi}$ (or more generally $k[\ov{\mc O}_\eta]^{U_n}_{\mc \chi}$) is still an open problem. If one tries to find them as subsets of the above spanning sets this is combinatorially already a challenging problem.
In the case of the $\GL_n$-modules $V^{\ot r}\ot(V^*)^{\ot s}$, $V=k^n$, there is a similar problem of finding bases for the vector spaces $(V^{\ot r}\ot(V^*)^{\ot s})^{U_n}_{[\lambda,\mu]}$. In \cite{BCHLLS} this was done for $n\ge l(\lambda)+l(\mu)+r-|\lambda|$. 
In this case there is at least a good candidate indexing set for arbitrary $n$: the up-down staircase tableaux of \cite{Stem1}.\\
3. Note that in Theorem~\ref{thm.highest_weight_vecs} we can choose each $\alpha$ the unique representative such that for all $i$ $\alpha_i$ is special, i.e. a ``picture" in the sense of \cite{JP} and \cite{Z1}.\\
4. Corollary~1 to Theorem~\ref{thm.highest_weight_vecs} proves a weaker version of the ``conjecture" in \cite[Sect.~4]{T1}: in the notation there, with $\chi=[\lambda,\mu]$, the elements $$\vartheta\big(\psi_t((\tau,\id)\cdot E_\chi)\cdot s_{i_1}\ot\cdots\ot s_{i_t}\big)\,,$$ $2\le i_1,\ldots,i_t\le n$, $\tau\in\Sym_t$ generate the $k[\gl_n]^{\GL_n}$-module $k[\gl_n]^{U_n}_\chi$.
This follows by pulling the spanning set of the corollary back to the nilpotent cone taking $m=n-1$, using the fact that $(X^l)_{ij}=\pm(\partial_{ji}s_{l+1})(X)$ for all $X\in\mc N_n$, see \cite[Cor to Thm~1]{T2}, and applying Lemma~\ref{lem.reduction_to_nilpotent_cone}. Of course one can also use the $\{\id\}\times\Sym_t$-conjugates of $E_\chi$. Just take $\tau=\id$ and $\rho$ in the proof of the corollary. The original conjecture is false, see \cite[Rem~2.5]{T2}.
\end{remsgl}

\subsection{Several matrices}\label{ss.several_matrices}
In this final section we look at highest weight vectors in the coordinate ring of the space of several matrices $\Mat_n^l$ under the diagonal conjugation action of $\GL_n$. In order to be able to apply the graded Nakayama Lemma we need to work with the ``null-scheme" rather than the null-cone. We will denote an $l$-tuple of $n\times n$-matrices $(X_1,\ldots,X_l)$ by $\un X$.

We recall some results from \cite[Sect.~4]{Br}. For $i$ an integer $\ge0$ let $\mc X_i$ be the set of sequences of length $\le i$ with entries in $\{1,\ldots,l\}$ and let $\mc X_i'$ be $\mc X_i$ with the empty sequence omitted. For $\eta\in\mc X_i$ of length $j\le i$ define $f_\eta:\Mat_n^l\to\Mat_n$ by $f_\eta(\un X)=X_{\eta_1}\cdots X_{\eta_j}$.
By the Razmyslov-Procesi Theorem the algebra $k[\Mat_{rn}\times\Mat_{ns}\times\Mat_n^l]^{\GL_n}$ is generated by the functions
$(A,B,\un X)\mapsto\tr(f_\eta(\un X))$ and $(A,B,\un X)\mapsto (Af_\xi(\un X)B)_{ij}$, $\eta\in\mc X_{n^2}'$, $\xi\in\mc X_{n^2-1}$, $i\in\{1,\ldots,r\}$ and $j\in\{1,\ldots,s\}$.
Now let $\mc M_n$ be the closed subscheme of $\Mat_n^l$ corresponding to the ideal of $k[\Mat_n^l]$ generated by the functions $\un X\mapsto\tr(f_\eta(\un X))$. Then it follows from the above that for $m=|\mc X_{n^2-1}'|$ the restriction of the morphism
$$\psi_{r,s,n,l}:(A,B,\un X)\mapsto(Af_\xi(\un X)B)_{\xi\in \mc X_{n^2-1}'}
:Y_{r,s,n}\times\Mat_n^l\to\Mat_{rs}^m$$
to $Y_{r,s,n}\times\mc M_n$ is a $\GL_n$-quotient morphism onto its scheme-theoretic image $\mc W_{n,l}$.
Note that we omitted the empty sequence from $\mc X_{n^2-1}$, since we passed to $Y_{r,s,n}$, the variety of pairs of matrices $(A,B)\in\Mat_{rn}\times\Mat_{ns}$ with $AB=0$.

Analogous to the case of one matrix we will identify $\Mat_n^l$ with the closed subvariety $\{(E_r,F_s)\}\times\Mat_n^l$ of $Y_{r,s,n}\times\Mat_n^l$ and denote the restriction of $\psi_{r,s,n,l}$ to $\Mat_n^l$ again by $\psi_{r,s,n,l}$. Then the union of the $\GL_n$-conjugates of $\Mat_n^l=\{(E_r,F_s)\}\times\Mat_n^l$ is $\mc O\times\Mat_n^l$, where  $\mc O$ consists of the pairs $(A,B)\in Y_{r,s,n}$ with ${\rm rk}(A)=r$ and ${\rm rk}(B)=s$. The same holds with $\Mat_n^l$ replaced by $\mc M_n$. 
It follows that the comorphism of $\psi_{r,s,n,l}:\mc M_n\to\mc W_{n,l}$ is injective, since the natural map $k[Y_{r,s,n}\times\mc M_n]\to k[\mc O\times\mc M_n]$ is injective. Furthermore, the analogue of the identity for $\varphi_{r,s,n,m}$ at the beginning of the proof of Theorem~\ref{thm.surjective_pullback} holds for $\psi_{r,s,n,l}$.
Finally we apply the graded Nakayama Lemma to the $k[\Mat_n^l]^{\GL_n}$-module $k[\Mat_n^l]^{U_n}_\chi$ 
and we obtain

\begin{thmgl}
Let $\chi=[\lambda,\mu]$ be a dominant weight with coordinate sum zero and put $m=|\mc X_{n^2-1}'|$. Then the pull-back along $\psi_{r,s,n,l}:\Mat_n^l\to\Mat_{rs}^m$ of the spanning set of $k[\Mat_{rs}^m]^{U_r\times U_s}_{(-\mu^{\rm rev},\lambda)}$ from Theorem~\ref{thm.highest_weight_vecs} or the one from Corollary~1 is a spanning set of the $k[\Mat_n^l]^{\GL_n}$-module $k[\Mat_n^l]^{U_n}_\chi$.
\end{thmgl}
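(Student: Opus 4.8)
The plan is to run, in characteristic $0$, the same two-step reduction used for a single matrix: reduce from $\Mat_n^l$ to the null-scheme $\mc M_n$ by the graded Nakayama Lemma, and then transport the basis of Theorem~\ref{thm.highest_weight_vecs} across the transmutation $\psi_{r,s,n,l}$. The key simplification is that in characteristic $0$ every rational module is semisimple, so functors of the form $(-)^U_\chi$ are exact and all the good-pair technology of Section~\ref{s.charp} becomes unnecessary. First I would check that $\psi_{r,s,n,l}^*$ maps $k[\Mat_{rs}^m]^{U_r\times U_s}_{(-\mu^{\rm rev},\lambda)}$ into $k[\Mat_n^l]^{U_n}_\chi$ and preserves homogeneity. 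The first point is the equivariance identity $\psi_{r,s,n,l}(S\un XS^{-1})=S_{r\rfloor,r\rfloor}\,\psi_{r,s,n,l}(\un X)\,(S_{\lfloor s,\lfloor s})^{-1}$ for upper triangular $S$, valid for every $\un X\in\Mat_n^l$ (the analogue, noted before the theorem, of the identity opening the proof of Theorem~\ref{thm.surjective_pullback}); it shows that a $U_r\times U_s$-highest weight vector of weight $(-\mu^{\rm rev},\lambda)$ pulls back to a $U_n$-highest weight vector of weight $[\lambda,\mu]$. For homogeneity: each $u_{\nu,P,Q,\alpha}$ is multihomogeneous of multidegree $\nu$, and the $\xi$-th matrix entry function on $\Mat_{rs}^m$ pulls back under $\psi_{r,s,n,l}$ to a polynomial of degree $|\xi|$ in the entries of $\un X$, so each $\psi_{r,s,n,l}^*(u_{\nu,P,Q,\alpha})$ is homogeneous in $k[\Mat_n^l]$.

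Next, the graded Nakayama step. Since $\mc M_n$ is, by definition, cut out of $\Mat_n^l$ by the trace functions $\un X\mapsto\tr(f_\eta(\un X))$, which by the Razmyslov--Procesi Theorem generate $k[\Mat_n^l]^{\GL_n}$, the vanishing ideal of $\mc M_n$ is exactly $k[\Mat_n^l]^{\GL_n}_+\,k[\Mat_n^l]$. The module $M:=k[\Mat_n^l]^{U_n}_\chi$ is finitely generated over the Noetherian ring $k[\Mat_n^l]^{\GL_n}$ because $k[\Mat_n^l]$ is. Applying the exact functor $(-)^{U_n}_\chi$ to $0\to k[\Mat_n^l]^{\GL_n}_+k[\Mat_n^l]\to k[\Mat_n^l]\to k[\mc M_n]\to 0$, and to the $\GL_n$-equivariant presentation of $k[\Mat_n^l]^{\GL_n}_+k[\Mat_n^l]$ by the trace generators, gives $k[\mc M_n]^{U_n}_\chi=M/k[\Mat_n^l]^{\GL_n}_+M$, which is finite-dimensional. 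By the graded Nakayama Lemma (as in \cite[Lem.~2, Prop.~1]{T1}) any finite set of homogeneous elements of $M$ whose restrictions to $\mc M_n$ span $k[\mc M_n]^{U_n}_\chi$ generates $M$ as a $k[\Mat_n^l]^{\GL_n}$-module. So it remains to prove that $\psi_{r,s,n,l}^*\colon k[\Mat_{rs}^m]^{U_r\times U_s}_{(-\mu^{\rm rev},\lambda)}\to k[\mc M_n]^{U_n}_\chi$ is surjective.

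For this I would factor the map through the scheme-theoretic image $\mc W_{n,l}\subseteq\Mat_{rs}^m$ of $Y_{r,s,n}\times\mc M_n$ as
$$k[\Mat_{rs}^m]^{U_r\times U_s}_{(-\mu^{\rm rev},\lambda)}\twoheadrightarrow k[\mc W_{n,l}]^{U_r\times U_s}_{(-\mu^{\rm rev},\lambda)}\longrightarrow k[\mc M_n]^{U_n}_\chi,$$
the first arrow being restriction to the closed subscheme $\mc W_{n,l}$, hence surjective by exactness of $(-)^{U_r\times U_s}_{(-\mu^{\rm rev},\lambda)}$, and the second induced by the comorphism of $\psi_{r,s,n,l}\colon\mc M_n\to\mc W_{n,l}$. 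The second arrow is injective because $k[\mc W_{n,l}]=k[Y_{r,s,n}\times\mc M_n]^{\GL_n}$ embeds in $k[\mc O\times\mc M_n]$, and $\mc O\times\mc M_n$ is the union of the $\GL_n$-translates of $\{(E_r,F_s)\}\times\mc M_n$, so a $\GL_n$-invariant vanishing on the latter is zero. It is surjective by a dimension count: $\mc W_{n,l}\cong Y_{r,s,n}\times^{\GL_n}\mc M_n$, and $k[\mc M_n]$ has a good filtration (automatic in characteristic $0$), so the transmutation discussion after Proposition~\ref{prop.quotient} --- which uses only the good filtration of $k[Y_{r,s,n}]$ with its stated sections together with a character computation --- gives that the good filtration multiplicity of $\nabla_{\GL_r}(-\mu^{\rm rev})\ot\nabla_{\GL_s}(\lambda)$ in $k[\mc W_{n,l}]$ equals that of $\nabla_{\GL_n}([\lambda,\mu])$ in $k[\mc M_n]$, i.e. $\dim k[\mc W_{n,l}]^{U_r\times U_s}_{(-\mu^{\rm rev},\lambda)}=\dim k[\mc M_n]^{U_n}_\chi<\infty$. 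An injective linear map between spaces of equal finite dimension is bijective, so the second arrow is an isomorphism, the composite is surjective, and combined with Theorem~\ref{thm.highest_weight_vecs} (or its corollary) and the previous step this proves the theorem.

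The main obstacle is not a new idea but the bookkeeping around the fact that $\mc M_n$ is only a scheme, possibly non-reduced: one must be sure to invoke the transmutation multiplicity identity, the exactness of $(-)^U_\chi$, and the Nakayama reduction all at the level of the graded $\GL_n$-algebra $k[\mc M_n]$, where characteristic $0$ renders reducedness irrelevant, and one must take care that the pulled-back spanning set is genuinely homogeneous so that the graded form of Nakayama's lemma applies.
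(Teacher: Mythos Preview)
Your proof is correct and follows essentially the same approach as the paper: the paper's argument is the short discussion preceding the theorem (quotient morphism onto $\mc W_{n,l}$, injectivity of the comorphism $k[\mc W_{n,l}]\to k[\mc M_n]$, the equivariance identity for $\psi_{r,s,n,l}$, and the graded Nakayama Lemma), after which the theorem is simply asserted. You have written out exactly these steps, making explicit the two points the paper leaves to the reader in characteristic~$0$: that the restriction $k[\Mat_{rs}^m]\to k[\mc W_{n,l}]$ induces a surjection on highest weight vectors by semisimplicity (replacing the good-pair argument of Proposition~\ref{prop.good_pair}), and that the injective map $k[\mc W_{n,l}]^{U_r\times U_s}_{(-\mu^{\rm rev},\lambda)}\to k[\mc M_n]^{U_n}_\chi$ is an isomorphism by the transmutation dimension count.
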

\begin{remsgl}
1.\ Of course $m$ above is huge, but if we are only interested in homogeneous highest weight vectors of degree $d$ say, then we can take $m=|\mc X_d'|$ above and combine the resulting elements with homogeneous elements of $k[\Mat_n^l]^{\GL_n}$ to obtain a spanning set for the vector space of homogeneous highest weight vectors of weight $\chi$ and degree $d$.\\
2.\ Much of Section~\ref{ss.several_matrices} generalises to prime characteristic, but it is not clear how to prove the analogue of Proposition~\ref{prop.good_pair} for several matrices.
\end{remsgl}

\noindent{\it Acknowledgement}. This research was funded by the EPSRC grant EP/L013037/1.

\bigskip

{\sc\noindent School of Mathematics,\\
University of Leeds, LS2 9JT, Leeds, UK.\\
{\it E-mail address : }{\tt R.H.Tange@leeds.ac.uk}
}

\end{document}